\newtheorem{theorem}{Theorem}
\newtheorem{example}[theorem]{Example}
\newtheorem{definition}[theorem]{Definition}
\newtheorem{lemma}[theorem]{Lemma}
\newtheorem{corollary}[theorem]{Corollary}
\newtheorem{remark}[theorem]{Remark}
\newtheorem{proposition}[theorem]{Proposition}
\newtheorem*{proposition*}{Proposition}
\newtheorem*{definition*}{Definition}
\newtheorem*{theorem*}{Theorem}
\begin{document}

\title{A Decomposition Theorem for Topological Branched Coverings}

\author{Shahryar Ghaed Sharaf}

\date{\today}

\subjclass[2020]{55N33, 55M25, 57N80}

\keywords{Branched Covering, Sheaf Theory, Stratified Spaces}

\begin{abstract}
In the context of complex algebraic varieties, the decomposition theorem for semi-small maps provides a decomposition of the direct image of the constant sheaf. In this work, we develop a decomposition theorem for branched coverings of topological spaces. To achieve this, we start by constructing a decomposition theorem for unramified covering maps via an explicit gluing construction using transition functions. For a given branched covering of closed topological manifolds, we use the previous result to establish a decomposition of the direct image of the constant sheaf on the covering space. In the next step, we generalize our discussion to the case where the target space is not necessarily a topological manifold.
\end{abstract}

\maketitle

\tableofcontents
	
\section{Introduction}
       For a proper map of complex algebraic varieties $f:X \longrightarrow Y$, a non-canonical decomposition of the direct image of the intersection complex $Rf_{\ast} IC_{X}(\underline{\mathbb{Q}})$ in the bounded derived category $D^{b}_{c}(Y)$ is given by the BBDG decomposition theorem. If the map $f:X \longrightarrow Y$ is a smooth projective morphism of complex algebraic manifolds, then a decomposition of the direct image $Rf_{\ast} \underline{\mathbb{Q}}_{X}$ is given by Deligne's decomposition theorem.
    	\begin{theorem*}[Deligne's Decomposition Theorem]
        For a smooth projective map of complex algebraic manifolds	$f:X \longrightarrow Y$, we have the following isomorphism in the bounded derived category $D_{c}^{b}(Y)$.
         \begin{align*}
	     Rf_{\ast}\underline{\mathbb{Q}}_{X} \simeq \bigoplus_{i \geq 0}\mathcal{H}^{i}(Rf_{\ast} \underline{\mathbb{Q}}_{X})[-i]
         \end{align*}	
         and the local systems $R^{i}f_{\ast}\underline{\mathbb{Q}}_{X}=\mathcal{H}^{i}(Rf_{\ast} \underline{\mathbb{Q}}_{X})$ are semi-simple.
     	\end{theorem*}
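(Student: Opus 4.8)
The plan is to derive the splitting from a relative Hard Lefschetz isomorphism, combined with Deligne's \emph{Lefschetz criterion} for decomposing an object of a derived category. First I would fix a relatively ample class $\eta \in H^{2}(X;\rat)$ (available, e.g., when $f$ is projective) and let $L \colon Rf_{\ast}\underline{\mathbb{Q}}_{X} \to Rf_{\ast}\underline{\mathbb{Q}}_{X}[2]$ be the morphism in $D_{c}^{b}(Y)$ given by cup product with $\eta$. Since $f$ is smooth and proper, Ehresmann's theorem makes each $R^{i}f_{\ast}\underline{\mathbb{Q}}_{X}$ a local system with stalk $H^{i}(X_{y};\rat)$ at $y \in Y$, and $\eta$ restricts to a K\"ahler --- hence ample --- class on each fiber $X_{y}$; applying the classical Hard Lefschetz theorem stalkwise then shows that, with $n = \dim_{\mathbb{C}} X - \dim_{\mathbb{C}} Y$ the relative dimension, $L^{k} \colon R^{n-k}f_{\ast}\underline{\mathbb{Q}}_{X} \xrightarrow{\;\sim\;} R^{n+k}f_{\ast}\underline{\mathbb{Q}}_{X}$ is an isomorphism of local systems for every $k \geq 0$.

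I would then invoke Deligne's splitting criterion: if $K \in D_{c}^{b}(Y)$ carries a morphism $L \colon K \to K[2]$ such that $L^{k} \colon \mathcal{H}^{-k}(K) \to \mathcal{H}^{k}(K)$ is an isomorphism for every $k \geq 0$, then $K \simeq \bigoplus_{j} \mathcal{H}^{j}(K)[-j]$. Applied to $K = Rf_{\ast}\underline{\mathbb{Q}}_{X}[n]$, whose cohomology sheaves $\mathcal{H}^{j}(K) = R^{n+j}f_{\ast}\underline{\mathbb{Q}}_{X}$ sit in degrees $-n \leq j \leq n$, the isomorphisms found above are exactly the hypotheses of the criterion; shifting the conclusion back by $-n$ and reindexing gives $Rf_{\ast}\underline{\mathbb{Q}}_{X} \simeq \bigoplus_{i \geq 0} R^{i}f_{\ast}\underline{\mathbb{Q}}_{X}[-i]$. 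To establish the criterion I would induct on the cohomological amplitude of $K$, which the hypothesis forces to be symmetric about $0$. The isomorphism $L^{n}$ lets one peel off the two extreme cohomology sheaves: using $\tau_{\leq -n}K = \mathcal{H}^{-n}(K)[n]$, the composite
\[
K \xrightarrow{\;L^{n}\;} K[2n] \longrightarrow \tau_{\geq -n}\bigl(K[2n]\bigr) = \mathcal{H}^{n}(K)[n] \xrightarrow[\;\sim\;]{(L^{n})^{-1}} \mathcal{H}^{-n}(K)[n]
\]
retracts the inclusion $\tau_{\leq -n}K \hookrightarrow K$, and a dual construction splits off $\mathcal{H}^{n}(K)[-n]$; the residual complex has amplitude smaller by one and, \emph{if the splittings are chosen compatibly with} $L$, inherits an operator of the same type, so the induction closes.

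For the semisimplicity statement, the plan is to use that each $R^{i}f_{\ast}\underline{\mathbb{Q}}_{X}$ underlies a polarizable variation of pure Hodge structure of weight $i$ on $Y$ --- the Hodge filtration being the fiberwise one, a polarization coming from cup product together with the primitive (Lefschetz) decomposition furnished by $\eta$ --- and that such variations are semisimple as local systems: any local subsystem $\mathbb{W}$ is automatically a sub-variation of Hodge structure, so its orthogonal complement $\mathbb{W}^{\perp}$ for the (parallel) polarization form is again a monodromy-invariant sub-variation, and complete reducibility follows by induction on the rank.

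The genuine difficulty, I expect, is precisely the compatibility flagged above in the proof of Deligne's criterion: the retractions and sections splitting off the extreme sheaves must be chosen so that $L$ preserves the residual summand, which is most cleanly arranged by transporting everything to the total cohomology $\bigoplus_{j}\mathcal{H}^{j}(K)$ --- where $L$ and its Hard Lefschetz dual generate an $\mathfrak{sl}_{2}$-action --- decomposing it into irreducible strings, and building the summand of $K$ attached to each string. By comparison, the semisimplicity is essentially formal once the Hodge-theoretic inputs (the polarized variation, and the fact that local subsystems are sub-variations) are granted.
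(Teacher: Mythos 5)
The paper does not prove this statement: it is quoted as background in the introduction, with the proof attributed to Deligne (\cite{deligne1968} and \cite[Theorem 4.2.6]{deligne1971}). Your proposal reconstructs exactly the argument of those references --- relative Hard Lefschetz on the local systems $R^{i}f_{\ast}\underline{\mathbb{Q}}_{X}$, Deligne's splitting criterion for an object $K$ with $L^{k}\colon \mathcal{H}^{-k}(K)\xrightarrow{\sim}\mathcal{H}^{k}(K)$, and semisimplicity via polarizable variations of Hodge structure --- and it is correct, including your honest flagging of the one real subtlety (choosing the splittings compatibly with $L$, e.g.\ via the $\mathfrak{sl}_{2}$-string decomposition). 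The only caveat is one the paper's own statement shares: the argument needs $f$ proper (indeed projective, or at least K\"ahler fibers) to have a relatively ample class and Ehresmann local triviality, a hypothesis implicit in ``smooth map'' here but worth making explicit.
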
 
    	A detailed proof of the above theorem is given by Deligne in \cite{deligne1968} and \cite[Theorem 4.2.6]{deligne1971}.
    	For a proper surjective morphism $f:X \longrightarrow Y$ with $X$ non-singular, we fix a stratification of $Y= \bigsqcup_{\alpha} S_{\alpha}$ with respect to which $f$ is stratified. The morphism $f$ is called semi-small if for every $\alpha$ and any point $s_{\alpha} \in S_{\alpha}$ the inequality $\dim_{\mathbb{C}}(S_{\alpha})+2\dim_{\mathbb{C}}(f^{-1}(s_{\alpha})) \leq \dim_{\mathbb{C}}(X)$ is satisfied. The decomposition theorem for semi-small morphisms provides a canonical decomposition of $Rf_{\ast}(\underline{\mathbb{Q}}_{X})$ in $\operatorname{Perv}(Y)$. A stratum $S_{\alpha}$ is called relevant if the equality $\dim_{\mathbb{C}}(S_{\alpha})+2\dim_{\mathbb{C}}(f^{-1}(s_{\alpha})) = \dim_{\mathbb{C}}(X)$ holds. For a relevant stratum $S_{\alpha}$, we define the local system $\mathcal{L}_{S_{\alpha} }:= (R^{n-\dim_{\mathbb{C}}S_{\alpha}}f_{\ast} \underline{\mathbb{Q}}_{X} )\vert_{S_{\alpha} }$. A detailed analysis of semi-small maps can be found in Borho and MacPherson's work \cite{BorhoPartialResolution}.
    	\begin{theorem*}[Decomposition Theorem for Semi-small Morphisms]
    	Choose a stratification of the complex algebraic variety $Y= \bigsqcup_{\alpha} S_{\alpha}$ with respect to which the proper surjective semi-small morphism $f:X \longrightarrow Y$ with $X$ non-singular is stratified. Let $\mathcal{S}_{\operatorname{rel}}$ denote the set of all relevant strata. Then, there is a canonical isomorphism in $\operatorname{Perv}(Y)$:
	\begin{align*}
		Rf_{\ast}\underline{\mathbb{Q}}_{X}[n] \simeq \bigoplus_{S_{\alpha} \in \mathcal{S}_{ \operatorname{rel} } }IC_{\overline{S}_{\alpha}  }(\mathcal{L}_{S_{\alpha} }  ),
	\end{align*}
	where $IC_{\overline{S}_{\alpha}  }(\mathcal{L}_{S_{\alpha} }  )$ is Deligne's sheaf complex regarding the middle perversity.	
	\end{theorem*}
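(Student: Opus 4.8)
The plan is to prove the statement via the theory of perverse sheaves and the decomposition theorem, following the streamlined treatment for semi-small maps due to de Cataldo and Migliorini. Write $n=\dim_{\mathbb{C}}(X)$, $d_{\alpha}=\dim_{\mathbb{C}}(S_{\alpha})$, and $K=Rf_{\ast}\underline{\mathbb{Q}}_{X}[n]$. First I would show that $K$ is a perverse sheaf on $Y$. Since $X$ is a smooth complex manifold, $\underline{\mathbb{Q}}_{X}[n]$ is perverse and Verdier self-dual in the sense that $\mathbb{D}_{X}(\underline{\mathbb{Q}}_{X}[n])\simeq\underline{\mathbb{Q}}_{X}[n]$; as $f$ is proper, $Rf_{\ast}$ commutes with Verdier duality, so $\mathbb{D}_{Y}(K)\simeq K$ and it suffices to verify the support condition, the cosupport condition then being dual to it. Proper base change gives $\mathcal{H}^{j}(K)_{s_{\alpha}}\simeq H^{j+n}(f^{-1}(s_{\alpha});\mathbb{Q})$, and semi-smallness bounds $\dim_{\mathbb{C}}f^{-1}(s_{\alpha})\leq\tfrac{1}{2}(n-d_{\alpha})$, so this stalk vanishes unless $j\leq -d_{\alpha}$; hence $\dim_{\mathbb{C}}\operatorname{supp}\mathcal{H}^{-k}(K)\leq k$ for every $k$, i.e.\ $K\in\operatorname{Perv}(Y)$.

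Next, the crucial step: I would show that $K$ is a \emph{semisimple} perverse sheaf. The category $\operatorname{Perv}(Y)$ is artinian, and its simple objects are the intersection complexes $IC_{\overline{S}_{\beta}}(\mathcal{L})$ attached to irreducible local systems $\mathcal{L}$ on strata $S_{\beta}$; hence $K$ possesses a finite composition series built from such objects, and the content of the theorem is that this series splits. This admits no elementary proof. I would either invoke the decomposition theorem directly (BBDG, or M.\ Saito's theory of mixed Hodge modules), or, to keep the semi-small situation self-contained, run the Hodge-theoretic argument of de Cataldo--Migliorini: for a semi-small map the only relevant instance of relative hard Lefschetz is the trivial one in perverse degree $0$, and the Hodge--Riemann bilinear relations for the polarized Hodge structure on $H^{\ast}(X;\mathbb{Q})$ force the refined intersection forms on the fibres to be nondegenerate, which yields the splitting of the filtration of $K$ by support; the monodromy local systems on the strata are semisimple because they underlie polarizable variations of Hodge structure (Deligne's semisimplicity theorem). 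Either way one obtains $K\simeq\bigoplus_{\beta}IC_{\overline{S}_{\beta}}(\mathcal{M}_{\beta})$ for local systems $\mathcal{M}_{\beta}$ on the $S_{\beta}$, some possibly zero.

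It then remains to identify the summands. If $IC_{\overline{S}_{\beta}}(\mathcal{M}_{\beta})\neq 0$ occurs in $K$, then $\mathcal{M}_{\beta}$ is a direct summand of $\mathcal{H}^{-d_{\beta}}(K)|_{S_{\beta}}$, so $H^{n-d_{\beta}}(f^{-1}(s_{\beta}))\neq 0$, which together with semi-smallness forces $2\dim_{\mathbb{C}}f^{-1}(s_{\beta})=n-d_{\beta}$, that is, $S_{\beta}$ is relevant. Conversely, fix a relevant stratum $S_{\alpha}$ and restrict the decomposition to $S_{\alpha}$, regarded as an open subset of $\overline{S}_{\alpha}$ disjoint from all smaller strata. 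There $IC_{\overline{S}_{\alpha}}(\mathcal{M}_{\alpha})|_{S_{\alpha}}=\mathcal{M}_{\alpha}[d_{\alpha}]$; the summands attached to strata $S_{\beta}$ with $S_{\alpha}\not\subseteq\overline{S}_{\beta}$ restrict to $0$; and for $S_{\beta}$ with $S_{\alpha}\subsetneq\overline{S}_{\beta}$ the strict support condition for intersection complexes gives $\mathcal{H}^{-d_{\alpha}}\big(IC_{\overline{S}_{\beta}}(\mathcal{M}_{\beta})\big)|_{S_{\alpha}}=0$. Hence $\mathcal{M}_{\alpha}=\mathcal{H}^{-d_{\alpha}}(K)|_{S_{\alpha}}=(R^{n-d_{\alpha}}f_{\ast}\underline{\mathbb{Q}}_{X})|_{S_{\alpha}}=\mathcal{L}_{S_{\alpha}}$, and summing over relevant strata (using that $IC_{\overline{S}_{\alpha}}$ commutes with direct sums) yields
\[
Rf_{\ast}\underline{\mathbb{Q}}_{X}[n]\;\simeq\;\bigoplus_{S_{\alpha}\in\mathcal{S}_{\operatorname{rel}}}IC_{\overline{S}_{\alpha}}(\mathcal{L}_{S_{\alpha}}).
\]
The isomorphism is canonical because each graded piece of the canonical perverse filtration of $K$ by closure of support is here a single summand $IC_{\overline{S}_{\alpha}}(\mathcal{L}_{S_{\alpha}})$, so no choice enters the splitting.

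The main obstacle is the semisimplicity of $K$: everything else is bookkeeping with the support and cosupport conditions and proper base change, whereas semisimplicity is the genuine content of the decomposition theorem. For semi-small maps the Lefschetz input degenerates to perverse degree $0$, so the real difficulty lies in the nondegeneracy of the intersection forms on the fibres and in the semisimplicity of the strata local systems --- precisely the places where Hodge theory, or Gabber's semisimplicity theorem in the $\ell$-adic setting, is indispensable.
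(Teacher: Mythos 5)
This theorem is quoted in the paper as background from the literature and is not proved there; the paper refers the reader to de Cataldo--Migliorini \cite{deCataldoHardLefschetz} (and to Borho--MacPherson \cite{BorhoPartialResolution}). Your proposal is in essence a correct outline of exactly that proof: perversity of $Rf_{\ast}\underline{\mathbb{Q}}_{X}[n]$ via self-duality, proper base change and the semi-smallness estimate; semisimplicity as the genuinely deep input (correctly identified as the content of the theorem, supplied either by BBDG/Saito or by the Hodge-theoretic nondegeneracy of the refined intersection forms); and identification of the summands by restricting to strata and using the support conditions for intersection complexes. The one justification I would adjust is the semisimplicity of the local systems $\mathcal{L}_{S_{\alpha}}$: the fibres over a relevant stratum are in general singular, so $(R^{n-\dim_{\mathbb{C}}S_{\alpha}}f_{\ast}\underline{\mathbb{Q}}_{X})\vert_{S_{\alpha}}$ does not obviously underlie a polarizable variation of Hodge structure, and Deligne's semisimplicity theorem does not apply directly. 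The standard (and more elementary) argument is that the stalk of $\mathcal{L}_{S_{\alpha}}$ is spanned by the fundamental classes of the irreducible components of maximal dimension of the fibre, so the monodromy representation factors through a finite permutation group and is semisimple by Maschke's theorem. With that substitution your sketch agrees with the proof in the cited reference.
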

	For a comprehensive treatment of the decomposition theorem for semi-small maps, see \cite{deCataldoHardLefschetz}. The aforementioned decomposition theorems concern algebraic varieties and algebraic morphisms. As is well-known, every algebraic morphism of complex algebraic varieties can be stratified (see, e.g., \cite{VerdierStratification}). Consider a stratified branched covering $ f:X \longrightarrow Y $ of complex algebraic varieties. Because the fiber $ f^{-1}(y) $ is zero-dimensional for every $ y \in Y $, the only relevant stratum is the top stratum $ S_{\alpha_{\text{top}}} $. The decomposition theorem for semi-small maps therefore yields a quasi-isomorphism
	$Rf_{\ast} \underline{\mathbb{Q}}_{X}[n] \simeq IC_{\overline{S}_{\alpha_{\text{top}}}}(\mathcal{L}_{{S_{\alpha_{\text{top}}}}})$.

	Let $f:M \longrightarrow N$ be a smooth proper submersion of smooth manifolds. Ehresmann’s theorem for smooth manifolds states that the map $f$ is a locally trivial smooth fiber bundle and that all fibers are diffeomorphic. The immediate cohomological consequence of Ehresmann’s theorem is that, for each $k$, the direct image sheaf $R^{k}f_{\ast} \underline{\mathbb{Q}}_{M}$ is a local system. Note that it also follows from Ehresmann's theorem that a smooth proper surjective submersion between closed manifolds of the same dimension is an unramified covering map. Hence, in this specific case—even though $M$ and $N$ are generally smooth manifolds rather than complex algebraic manifolds—the decomposition of $Rf_{\ast} \underline{\mathbb{Q}}_{M}$ follows directly from covering space theory. In general, however, while Ehresmann’s theorem fully describes the local topological behavior of a smooth proper submersion, Deligne's decomposition theorem applies only to smooth proper morphisms of complex algebraic manifolds. An example of a map where the extension of the conclusion of Deligne's decomposition theorem fails although Ehresmann’s theorem holds, is provided by the Hopf map $h:S^{3} \longrightarrow S^{2}$. Hence, even if the local topological behavior of a map is fully understood, there is no necessity of obtaining a decomposition theorem based solely on that local behavior.
	
	This work addresses the following question: To what extent can the algebraic decomposition theorems applied to branched coverings of complex algebraic varieties be generalized to a purely topological setting?

	To be more precise, we define topological branched coverings, the main objects of study in this work, as follows.

	 \begin{definition*}[Topological Branched Coverings]
	 	Let $X$ and $Y$ be path‑connected, locally compact Hausdorff spaces. A continuous, open, finite‑to‑one, proper surjection $f : X \longrightarrow Y$
		is called a \textbf{topological branched covering} if there exists a 
		closed nowhere dense subset $R \subset Y$ such that the restriction $f\big|_{f^{-1}(Y \setminus R)} : f^{-1}(Y \setminus R) \longrightarrow Y \setminus R$
		is a finite covering map. The set $Y \setminus R$ is called the \textbf{regular set}
		and $R$ is called the \textbf{branch locus} of $f$.
	\end{definition*}

	In \cite{fox1957covering}, Fox showed that a finite-to-one unbranched covering of a locally connected $T_1$ space $Y \setminus R$ can be uniquely extended to a branched covering $f:X \longrightarrow Y$. In this work, we do not assume any analytic or algebraic structure on $X$ or $Y$, but rather that they are topological manifolds and/or pseudomanifolds. Furthermore, we assume the branch locus $R \subset Y$ is a closed stratified space of codimension 2. However, it turns out that we still need a restriction on the inclusion $R \hookrightarrow Y$ in order to generalize the statements of the above decomposition theorems to the topological setting. Example \ref{non-example} provides an instance of a branched covering with the following property: for some point $r \in R$, there exists a small contractible open neighborhood 
	$U \subset Y$ of $r$ such that, for every open contractible $V \subseteq U$ containing $r$, the preimage $f^{-1}(V)$ is not contractible. The central idea of the proofs of the main results of this work, namely Theorem \ref{MainTheo1} and Proposition \ref{SingDecompProp1}, is to rule out the existence of such open neighborhoods. Definition \ref{localflatness} provides a setting that excludes the existence of branch loci with the property mentioned above, while preserving the topological setting without assuming any analytic or algebraic structure.
	
	In Section \ref{DecUnbranched}, we show that for an unramified covering map $f:X \longrightarrow Y$ of degree $n$, where $X$ and $Y$ are Hausdorff, path-connected, locally contractible, and locally compact topological spaces, we have, in $\operatorname{Sh}(Y)$, the isomorphism $f_{\ast} \underline{\mathbb{Q}}_{X} \cong \underline{\mathbb{Q}}_Y \oplus \mathcal{L}$, with $\mathcal{L}$ a local system of rank $n-1$. We further give an explicit description of $\mathcal{L}$ using the transition functions of $f$.

	The following theorem, which generalizes Deligne's decomposition theorem to branched coverings of topological manifolds, is the main result of Section \ref{DecompBranchedMfds}. However, since for a topological pseudomanifold it is possible to have strata of odd codimension, we prove the statement of the theorem for both the lower and the upper middle perversities.
	\begin{theorem*}[Theorem \ref{MainTheo1}]
		Let $f:X \longrightarrow Y$ be a branched covering between closed topological $n$-manifolds. Let the closed topologically stratified space $R \subset Y$ be the branch locus, with the stratification $R=R_{n-2} \supset R_{n-3} \supset \cdots$ and set $B:= f^{-1}(R)$. Denote by $\mathcal{L}$ the local system on $Y\setminus R$ associated to the unbranched covering $f|_{X\setminus B}: X\setminus B \longrightarrow Y\setminus R$. Then the direct image sheaf complex $Rf_{\ast}\underline{\mathbb{Q}}_{X} \in D^{b}_{c}(Y)$ can be decomposed as
		\begin{align*}
			Rf_{\ast}\underline{\mathbb{Q}}_{X}[n] \simeq \underline{\mathbb{Q}}_{Y}[n] \oplus IC_{Y}(\mathcal{L}),
		\end{align*}
		where Deligne's sheaf complex $IC_{Y}(\mathcal{L})$ is computed with respect to the stratification $Y \supset R_{n-2} \supset R_{n-3} \supset \cdots$, taking the perversity to be either the lower middle perversity or the upper middle perversity. Furthermore, the constructible sheaf $R^{0}j_{\ast}\mathcal{L}$ is uniquely determined (up to isomorphism) by the restriction map $f\vert_{X \setminus B}$, where $j:Y \setminus R \hookrightarrow Y$.
	\end{theorem*}
	
	We further generalize the above theorem to branched coverings with possibly singular target spaces. However, for this purpose we need a refinement of the intrinsic stratification of the pseudomanifold $Y$, which is provided by the following Proposition.
	\begin{proposition*}[Proposition \ref{RefinedStrat}]
		Let $f:X \longrightarrow Y$ be a branched covering of topological spaces such that $X$ is a closed topological manifold and $Y$ is a closed topological pseudomanifold. Furthermore, assume that the branch locus $R \subset Y$ is a closed stratified space of codimension 2 with the intrinsic stratification $R= \bigsqcup_{\beta}T_{\beta}$. Then, for the intrinsic stratification of $Y$, namely $Y= \bigsqcup_{\alpha} S_{\alpha}$, the decomposition 
		\begin{align*}
			Y=\big(S_{\alpha_{\text{top}}}\setminus (R \cap S_{\alpha_{\text{top}}}) \big) \bigsqcup_{\alpha, \beta } (S_{\alpha} \cap T_{\beta})
		\end{align*}
		is a refined stratification of $Y$, where $S_{\alpha_{\text{top}}}$ is the top stratum of $Y$.
	\end{proposition*}
	The following theorem is a generalization of the decomposition theorem for semi-small morphisms to branched coverings of topological spaces with possibly singular target spaces. Furthermore, since a topological pseudomanifold may have strata of odd codimension, we prove the statement of the theorem for both the lower and upper middle perversities.
	\begin{theorem*}[Theorem \ref{MainTheo2}]
		Let $f:X \longrightarrow Y$ be a branched covering, where $X$ is a closed topological manifold of dimension $n$, the topological space $Y$ is a closed topological pseudomanifold, and the branch locus is a closed topologically stratified subspace $R \subset Y$ of codimension 2. We consider the refined stratification obtained in the above  proposition. Then, in $D^{b}_{c}(Y)$, the following isomorphism holds:
		\begin{align*}
			Rf_{\ast}{\underline{\mathbb{Q}}}_{X}[n] \simeq IC_{Y}(\underline{\mathbb{Q}}_{Y \setminus R}) \oplus IC_{Y}(\mathcal{L}_{Y \setminus R}),
		\end{align*}
		where $\mathcal{L}_{Y \setminus R}$ is the local system associated with the unbranched covering $f\vert_{X \setminus f^{-1}(R)}: X \setminus f^{-1}(R) \longrightarrow Y \setminus R$ and Deligne's sheaf complexes $IC_{Y}(-)$ are computed with respect to the refined stratification, using either the lower or upper middle perversity.
	\end{theorem*}

    \section{Preliminaries}\label{Preliminaries}
    The first section of this work fixes notation and provides the necessary definitions, beginning with the central object of study: branched covering maps.

    \begin{definition}
    Let $X$ and $Y$ be path‑connected, locally compact Hausdorff spaces. 
    A continuous,open, finite‑to‑one, proper surjection $f : X \longrightarrow Y$
    is called a \textbf{topological branched covering} if there exists a 
    closed nowhere dense subset $R \subset Y$ such that the restriction $f\big|_{f^{-1}(Y \setminus R)} : f^{-1}(Y \setminus R) \longrightarrow Y \setminus R$
    is a finite covering map. The set $Y \setminus R$ is called the \textbf{regular set}
    and $R$ is called the \textbf{branch locus} of $f$.
\end{definition}

\begin{remark}
Note that for a given branched covering $f:X \longrightarrow Y$, the branch locus $R \subset Y$ is not unique. Any nowhere dense closed set $R^{\prime}$ that contains $R$ is also a branch locus. In this work, by the branch locus $R$ we mean the minimal branch locus, i.e., the set of all points $y \in Y$ that have no evenly covered neighborhood.
\end{remark}
As shown for example in \cite{fox1957covering}, the branch locus $R$ of a topological branched covering $f:X \longrightarrow Y$ is not, in general, a topologically stratified space. For the purposes of this work, however, it is sufficient to restrict our study to cases where the branch locus is a closed stratified space of codimension 2.

\begin{definition}\label{pseudomanifolds}
	We define a \textbf{topologically stratified space} inductively on dimension. A 0-dimensional topologically stratified space $X$ is a countable set with the discrete topology. For $m > 0$, an \textbf{$m$-dimensional topologically stratified space} is a para-compact Hausdorff topological space $X$ equipped with a filtration
	\begin{align*}
		X=X_{m} \supseteq X_{m-1} \supseteq \dots \supseteq X_{1} \supseteq X_{0} \supseteq X_{-1}= \emptyset
	\end{align*}
	by closed subsets $X_{j}$ such that if $x \in X_{j}-X_{j-1}$ there exists a neighborhood $\mathcal{N}_{x}$ of $x$ in $X$, a compact $(m-j-1)$-dimensional topologically stratified space $\mathcal{L}$ with filtration
	\begin{align*}
		\mathcal{L}=\mathcal{L}_{m-j-1} \supseteq \dots \supseteq \mathcal{L}_{1} \supseteq  \mathcal{L}_{0} \supseteq \mathcal{L}_{-1} = \emptyset,
	\end{align*}  
	and a homeomorphism $\phi : \mathcal{N}_{x} \longrightarrow \mathbb{R}^{j} \times C(\mathcal{L}),$
	where $C(\mathcal{L})$ is the open cone on $\mathcal{L}$, such that $\phi$ takes $\mathcal{N}_{x} \cap X_{j+i+1}$ homeomorphically onto
	\begin{align*}
		\mathbb{R}^{j} \times C(\mathcal{L}_{i}) \subseteq \mathbb{R}^{j} \times C(\mathcal{L})
	\end{align*}
	for $m-j-1 \geq i \geq 0$, and $\phi$ takes $\mathcal{N}_{x} \cap X_{j}$ homeomorphically onto
	\begin{align*}
		\mathbb{R}^{j} \times \{ \text{vertex of }\; C(\mathcal{L}) \}.
	\end{align*}
\end{definition}
\begin{remark}\label{linkandstratum}
	It follows that $X_{j}-X_{j-1}$ is a $j$-dimensional topological manifold. (The empty set is a manifold of any dimension.) We call the connected components of these manifolds the \textbf{strata} of $X$. Any $\mathcal{L}$ that satisfies the above properties is referred to as a \textbf{link} of the stratum at $x$.   
\end{remark}
\begin{definition}\label{PSMFD}
	An \textbf{$m$-dimensional topological pseudomanifold} is a para-compact Hausdorff topological space $X$ which possesses a topological stratification such that $X_{m-1}=X_{m-2}$
	and $X-X_{m-1}$ is dense in $X$.
\end{definition}

\begin{definition}[Goresky-MacPherson.]	
	A \textbf{perversity} 
	\begin{align*}
		\bar{p}: \mathbb{Z}_{ \geq 2} \longrightarrow \mathbb{Z}
	\end{align*}
	is a function such that
	$\overline{p}(2)=0$ and
	$\overline{p}(k+1)- \overline{p}(k) \in \{1,0\}$.
	The \textbf{complementary perversity} $\bar{q}$ of $\bar{p}$ 
	is the one with $\overline{p}(k)+\overline{q}(k)=k-2$. Furthermore, the lower middle perversity is defined by $\bar{m}(k)=[\frac{k-2}{2}]$. Its complementary perversity is called the upper middle perversity and is defined by $\bar{n}(k)=[\frac{k-1}{2}]$. 
\end{definition}

Throughout this work, the term "stratification" for a stratified space or a topological pseudomanifold refers to the intrinsic stratification. To be more precise, let $X$ be a stratified space. One begins by defining the top stratum as the largest open submanifold of $X$ and proceeds inductively based on the dimension of the strata. For details on this construction, the reader may consult \cite[V]{borel}.

\begin{definition}
	Let $X$ be a topological space. A \textbf{local system} (of abelian groups) on $X$ is a locally constant sheaf of abelian groups on $X$. In other words, a sheaf  $\mathcal{L}$ is a local system if every point has an open neighborhood $U$ such that the restricted sheaf $\mathcal{L}\vert_{U}$ is isomorphic to the sheafification of some constant presheaf.
\end{definition}
A central object of study in this work is Deligne's sheaf complex, which is defined as follows.
\begin{definition}\label{DeSheaf}
		Let $X^{n}$ be an $n$-dimensional pseudomanifold with a filtration
	$X = X_{n} \supseteq X_{n-2} \supseteq \cdots \supseteq X_{0} \supseteq \emptyset$, and set
	$U_{k} = X - X_{n-k}$ for $k \geq 2$, with inclusion maps $i_{k}:U_{k} \hookrightarrow U_{k+1}$. Let $\mathcal{L}$ be a local system on $X-X_{n-2}$. Deligne's sheaf complex $IC^{\overline{p}}_{X}(\mathcal{L})$ with respect to the perversity $\overline{p}$ is defined as 
	\begin{align*}
		IC^{\overline{p}}_{X}(\mathcal{L}) = \tau_{\leq \overline{p}(n)-n} Ri_{n \ast}  \cdots \tau_{\leq \overline{p}(3)-n} Ri_{3\ast}   \tau_{\leq \overline{p}(2)-n} Ri_{2 \ast} \mathcal{L}[n].
	\end{align*} 
\end{definition}

	In this paper, we denote Deligne’s sheaf complex by $IC^{\overline{p}}_{X}(-)$ and omit the perversity from the notation whenever it refers to the lower or the upper middle perversity. 

    \begin{remark}
    Verdier duality states that if $X$ is an orientable topological pseudomanifold of pure dimension $n$, then the Verdier dual of Deligne's sheaf complex associated to a perversity $\bar{p}$ and to the constant sheaf on the non-singular part of $X$ is quasi-isomorphic to Deligne's sheaf complex associated to the complementary perversity $\bar{q}$ and to the constant sheaf on the non-singular part of $X$, up to a shift by $n$. In particular, the lower and the upper middle perversities are of interest. It is known that complex algebraic varieties can be stratified such that the strata are smooth algebraic varieties. It follows that the strata are of even codimension, and hence, in this case, the lower and the upper middle perversities agree. But in general, for topological pseudomanifolds, it is possible to have strata of odd codimension. Hence, in this work, we distinguish between the lower and the upper middle perversities and prove the decomposition theorems \ref{MainTheo1} and \ref{MainTheo2} for both perversities.
    \end{remark}

    If the space $X$ is path-connected, a local system $\mathcal{L}$ of abelian groups has the same stalk $L$ at every point. The following proposition is an important result about such local systems for this work.
\begin{proposition}
	Let $X$ be a para-compact, Hausdorff, path-connected and	locally simply connected space. There is a bijective correspondence between local systems on $X$ and group homomorphisms
	$\rho :\pi _{1}(X,x) \longrightarrow \operatorname{Aut}(L)$.
\end{proposition}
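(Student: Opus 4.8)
The plan is to construct a \emph{monodromy} map from local systems to representations, construct an \emph{associated sheaf} map from representations to local systems via the universal cover, and then verify the two are mutually inverse (on isomorphism classes of local systems, respectively conjugacy classes of homomorphisms). The whole argument rests on one technical lemma: any locally constant sheaf on a path-connected, locally path-connected, simply connected space is constant. First I would prove this lemma for the interval $[0,1]$ and the square $[0,1]^2$: cover the (compact) domain by finitely many connected opens on which the sheaf is constant (Lebesgue number lemma), and glue the resulting identifications along the overlaps, which are connected, to obtain a global trivialization.

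Given a local system $\mathcal{L}$ on $X$ with stalk $L$ and a path $\gamma:[0,1]\to X$, the pullback $\gamma^{\ast}\mathcal{L}$ is a locally constant sheaf on $[0,1]$, hence constant, so its space of global sections maps isomorphically onto every stalk; composing these identifications at the two endpoints yields a canonical transport isomorphism $T_{\gamma}:\mathcal{L}_{\gamma(0)}\xrightarrow{\ \sim\ }\mathcal{L}_{\gamma(1)}$. The next step is to show $T_{\gamma}$ depends only on the homotopy class of $\gamma$ rel endpoints: a homotopy $H:[0,1]^{2}\to X$ rel endpoints pulls $\mathcal{L}$ back to a constant sheaf on the square, from which $T_{\gamma}=T_{\gamma'}$ is read off directly. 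One also checks $T_{\gamma\cdot\delta}=T_{\delta}\circ T_{\gamma}$ and $T_{\mathrm{const}}=\operatorname{id}$. Restricting to loops based at $x$ and setting $L=\mathcal{L}_{x}$ then gives a group homomorphism $\rho_{\mathcal{L}}:\pi_{1}(X,x)\to\operatorname{Aut}(L)$ (taking inverses, or reversing the product in $\pi_1$, to convert the natural anti-homomorphism into a homomorphism), well defined up to conjugation and natural in $\mathcal{L}$.

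For the reverse direction, note that $X$ path-connected, locally simply connected (hence locally path-connected and semilocally simply connected), and Hausdorff admits a universal covering $p:\widetilde{X}\to X$ on which $\pi_{1}(X,x)$ acts by deck transformations. Given $\rho:\pi_{1}(X,x)\to\operatorname{Aut}(L)$, form $E_{\rho}=(\widetilde{X}\times L)/\pi_{1}(X,x)$ with the diagonal action $\gamma\cdot(\tilde y,\ell)=(\gamma\cdot\tilde y,\rho(\gamma)\ell)$, where $L$ is discrete, and let $\mathcal{L}_{\rho}$ be the sheaf of continuous local sections of the projection $E_{\rho}\to X$. Over any evenly covered simply connected open $U\subseteq X$ one has $E_{\rho}|_{U}\cong U\times L$, so $\mathcal{L}_{\rho}$ is locally constant with stalk $L$, and it is a sheaf of abelian groups because $L$ is one; the standard identification of deck transformations with path lifting shows the monodromy of $\mathcal{L}_{\rho}$ is $\rho$.

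Finally I would check that the constructions are mutually inverse: $\rho_{\mathcal{L}_{\rho}}=\rho$ is exactly the monodromy computation just mentioned, while for a local system $\mathcal{L}$ the transport isomorphisms give a trivialization of $p^{\ast}\mathcal{L}$ on the (simply connected) universal cover equivariant for $\rho_{\mathcal{L}}$, which descends to a canonical isomorphism $\mathcal{L}\cong\mathcal{L}_{\rho_{\mathcal{L}}}$. I expect the main obstacle to be the bookkeeping in the homotopy-invariance step — proving the constancy lemma on $[0,1]^{2}$ cleanly and tracking composition/orientation conventions so that one genuinely lands on a homomorphism — together with verifying the naturality of $\mathcal{L}\cong\mathcal{L}_{\rho_{\mathcal{L}}}$. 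The remaining verifications are routine unwindings of definitions; alternatively, the proposition can be deduced from the equivalence between locally constant sheaves of $L$-sets and covering spaces of $X$ with fibre $L$, combined with the classification of covering spaces of $X$.
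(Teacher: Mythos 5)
Your proof is correct, but note that the paper does not actually prove this proposition: it is stated in the preliminaries as a classical fact, and only the direction from representations to local systems is later spelled out, in Remark~\ref{LocalandRep}, via exactly the flat-bundle/universal-cover construction $E_{\rho}=(\widetilde{X}\times L)/\pi_{1}(X,x)$ that you use for your inverse map. Your forward direction (parallel transport via the constancy of locally constant sheaves pulled back to $[0,1]$ and $[0,1]^{2}$) is the standard monodromy argument and is sound; the hypotheses of local simple connectivity guarantee both the existence of the universal cover and the evenly covered trivializing neighborhoods you invoke. The one point worth keeping explicit in a final write-up is the one you already flag parenthetically: as literally stated, the ``bijective correspondence'' only holds between isomorphism classes of local systems and conjugacy classes of homomorphisms (equivalently, between local systems equipped with a chosen identification $\mathcal{L}_{x}\cong L$ and actual homomorphisms), so your careful phrasing is in fact more precise than the proposition itself.
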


Even if we rule out branched coverings whose branch locus is not a topologically stratified space, in a purely topological setting, there exist branched coverings $f: X \longrightarrow Y$ such that the branch locus $R \subset Y$ is a topologically stratified space of codimension $2$ and some points $r \in R$ have the following property: there exists a small contractible open neighborhood $ U \subset Y $ of $ r $ such that for every open contractible $ V \subseteq U $ containing $ r $, the preimage $ f^{-1}(V) $ is not contractible. We provide such an example below.

\begin{example}\label{non-example}
	Consider the Fox--Artin arc $A \subset S^3$ introduced in \cite{fox1948some} by Fox and Artin, a wild arc in the $3$-sphere. There exists a branched covering map $f: S^3 \to S^3$ with branch locus $R = A$.  The existence of such a branched covering follows from \cite{fox1957covering}, which shows that a finite-to-one unbranched covering of a locally connected $T_1$ space can be uniquely extended to a branched cover, and from the fact that the complement of a wild arc can have nontrivial fundamental group. Let $r \in R$ be a branch point that is also a wild point, and let $U \subset S^3$ be a sufficiently small open contractible neighborhood of $r$ (for instance, a topological $3$-ball). Due to the wild embedding of $A$ in $S^3$, for any open, contractible neighborhood $V \subseteq U$ containing $r$, the preimage $f^{-1}(V)$ fails to be contractible. More specifically, if $V$ is chosen to be a neighborhood of $r$ 
	that intersects $A$ in a non-trivial manner (which is unavoidable because $r$ lies on the wild arc), 
	then $f^{-1}(V)$ has a nontrivial fundamental group. This phenomenon occurs because the branched covering, 
	when restricted to such neighborhoods, is not equivalent to a standard cyclic cover branched along a locally flat submanifold; instead, the wildness of $A$ introduces non-trivial topology in the preimage. In other words, the wildness of the arc prevents the existence of locally trivializing neighborhoods for the branched covering in the usual sense.
\end{example}

However, this is not the case in the PL or Diff categories. In fact, local models in these categories rule out the existence of non-contractible preimages resulting from the wild embeddings mentioned above. For further details and an explicit study of these local models, the reader may consult, e.g., \cite{PiergalliniBranched} by Piergallini. Hence, we restrict our study to the following class of branched coverings. The goal is to maintain a pure topological setting while excluding wild embeddings such as the previous example.

\begin{definition}\label{localflatness}
	Let $Y = \bigsqcup_{\alpha} S_{\alpha}$ and $R = \bigsqcup_{\beta} T_{\beta}$ be topologically stratified spaces such that $R \subseteq Y$. An inclusion $R \hookrightarrow Y$ is said to be locally flat at a point $r \in R$ if the following holds. Suppose $r \in S_{\alpha} \cap T_{\beta}$ and $\dim(S_{\alpha}) \leq \dim(T_{\beta})$. Then there exist distinguished neighborhoods $U \subset R$ and $U^{\prime} \subset Y$ of $r$ such that $U \subset U^{\prime}$ and the topological pair $(U \cap T_{\beta},\, U^{\prime} \cap S_{\alpha})$ is homeomorphic to the topological pair $(\mathbb{R}^{\dim T_{\beta}},\, \mathbb{R}^{\dim S_{\alpha}})$ with the standard inclusion $\mathbb{R}^{\dim S_{\alpha}} \hookrightarrow \mathbb{R}^{\dim T_{\beta}}$. In other words, there exist homeomorphisms $U^{\prime} \cap S_{\alpha} \longrightarrow \mathbb{R}^{\dim S_{\alpha}}$ and $U \cap T_{\beta} \longrightarrow \mathbb{R}^{\dim T_{\beta}} $ making the following diagram commute:
	\begin{align*}		\begin{tikzcd}[ampersand replacement=\&]
			U^{\prime} \cap S_{\alpha} \arrow[r,  hookrightarrow] \arrow[d] \& U \cap T_{\beta} \arrow[d]   \\
			\mathbb{R}^{\dim S_{\alpha}} \arrow[r, hookrightarrow]  \& \mathbb{R}^{\dim T_{\beta} }  
		\end{tikzcd}.
	\end{align*}
	The inclusion $R \hookrightarrow Y$ is locally flat if it is locally flat at each point $r \in R$. If $\dim(S_{\alpha}) \geq \dim(T_{\beta})$, we define local flatness similarly.
\end{definition}

\begin{definition}\label{BranchLocFlat}
	A branched covering $f:X \longrightarrow Y$ of topological pseudomanifolds is called locally flat if the inclusions $R \hookrightarrow Y$ and $f^{-1}(R) \hookrightarrow X$ are locally flat, where $R \subset Y$ is the branch locus.
\end{definition}
In the following sections, the term ``branched coverings'' refers to locally flat branched coverings unless otherwise stated.
 
\section{A Decomposition Theorem for Topological Unramified Covering Maps}\label{DecUnbranched}
We start this section by fixing some notations. Let $f:X \longrightarrow Y$ be a covering map of Hausdorff, path-connected, locally contractible 
and locally compact topological spaces of degree $n$. Let $\{ U_{i} \}_{i \in I}$ be a good cover of $Y$; that is, for each $i \in I$, we have the homeomorphisms $\phi_{i}: f^{-1}(U_{i}) \xrightarrow{\cong} U_{i} \times F$ and $F \cong f^{-1}(y_{0})$ for a fixed base point $y_{0} \in Y$. Let $\cap_{ij}:= U_{i} \cap U_{j}$ be non-empty for $i \neq j$. Then the transition functions are $h_{ij}: \cap_{ij} \longrightarrow \operatorname{Aut}(F) \cong S_{n}$, where $S_{n}$ is the symmetric group of a set of $n$ elements. The local trivializing homeomorphisms $\phi_{i}$ and $\phi_{j}$ give rise to the homeomorphism $g_{ji}:=\phi_{j}\vert \circ \phi_{i}^{-1} \vert: \cap_{ij} \times F \xrightarrow{\cong} \cap_{ij} \times F$. Note that since $f^{-1}(y_{0})$ is a disjoint union of points, we obtain $R^{i}f_{\ast} \underline{\mathbb{Q}}_{X} \cong 0$ for $i \geq 1$. We aim to construct the sheaf $R^{0}f_{\ast}\underline{\mathbb{Q}}_{X}$ using the above topological data, inductively. The following proposition provides a proper starting point.
\begin{proposition}\label{PropDecomp1}
	Let $f:X \longrightarrow Y$ be a covering map of Hausdorff, path-connected, locally contractible 
	and locally compact topological spaces of degree $n$. Then in $\operatorname{Sh}(Y)$, we have 
	\begin{align}\label{Decomp1}
		f_{\ast} \underline{\mathbb{Q}}_{X} \cong \underline{\mathbb{Q}}_Y \oplus \mathcal{L},
	\end{align}
	where $\mathcal{L}$ is a local system of rank $n-1$.
\end{proposition}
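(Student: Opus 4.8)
The plan is to prove this by an explicit sheaf-theoretic computation, using the fact that for a degree-$n$ covering the pushforward $f_\ast\underline{\mathbb{Q}}_X$ is a local system of rank $n$ on $Y$. First I would verify this: since $f$ is a finite covering of locally contractible spaces, $f_\ast\underline{\mathbb{Q}}_X$ is locally constant with stalk $\mathbb{Q}^n$ — indeed over each trivializing open $U_i$ with $f^{-1}(U_i)\cong U_i\times F$, $|F|=n$, one has $f_\ast\underline{\mathbb{Q}}_X\vert_{U_i}\cong \underline{\mathbb{Q}}_{U_i}^{\,n}$. Equivalently, $f_\ast\underline{\mathbb{Q}}_X$ corresponds under the Proposition on local systems to the permutation representation $\pi_1(Y,y_0)\to \operatorname{Aut}(F)\cong S_n \hookrightarrow \operatorname{Aut}(\mathbb{Q}^n)=GL_n(\mathbb{Q})$, where $\pi_1$ acts on the fiber $F=f^{-1}(y_0)$ by path-lifting (the monodromy action), and then linearly on $\mathbb{Q}^F\cong\mathbb{Q}^n$.

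The key algebraic observation is that the permutation representation $\mathbb{Q}^F$ of any finite group acting on a set $F$ always contains the trivial subrepresentation spanned by $\sum_{x\in F} x$, and — crucially over a field of characteristic $0$ — this trivial subrepresentation splits off: the complement is the "standard" or augmentation subrepresentation $\mathcal{L}_0:=\{\,\sum a_x x : \sum a_x = 0\,\}$, so that $\mathbb{Q}^F\cong \mathbb{Q}\oplus \mathcal{L}_0$ as $\pi_1(Y,y_0)$-modules. An explicit $\pi_1$-equivariant projection $\mathbb{Q}^F\to\mathbb{Q}$ is given by the augmentation map $\sum a_x x\mapsto \sum a_x$, with equivariant splitting $1\mapsto \frac{1}{n}\sum_{x\in F} x$; this uses invertibility of $n$ in $\mathbb{Q}$. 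Translating back through the equivalence of categories between local systems on $Y$ and representations of $\pi_1(Y,y_0)$ (valid since $Y$ is paracompact, Hausdorff, path-connected and locally simply connected — here locally contractible suffices), this decomposition of representations yields a decomposition of sheaves $f_\ast\underline{\mathbb{Q}}_X\cong\underline{\mathbb{Q}}_Y\oplus\mathcal{L}$ with $\mathcal{L}$ the rank-$(n-1)$ local system corresponding to $\mathcal{L}_0$.

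Alternatively — and this is presumably closer to the "explicit gluing construction using transition functions" promised in the abstract — I would avoid $\pi_1$ entirely and build the splitting directly on the cover $\{U_i\}$. On each $U_i$ the isomorphism $\phi_i$ identifies $f_\ast\underline{\mathbb{Q}}_X\vert_{U_i}$ with $\underline{\mathbb{Q}}_{U_i}^{\,F}$; define the local summand maps using the augmentation and its normalized section as above. The point is that these are natural in $F$, hence commute with the transition automorphisms $g_{ji}$ (equivalently $h_{ij}\in S_n$, acting by permuting coordinates of $\mathbb{Q}^F$), because permutation matrices preserve both the all-ones vector and the augmentation functional. Therefore the locally defined projections and inclusions glue to global sheaf maps $f_\ast\underline{\mathbb{Q}}_X\rightleftarrows\underline{\mathbb{Q}}_Y$ exhibiting $\underline{\mathbb{Q}}_Y$ as a direct summand, with complement $\mathcal{L}:=\ker(\text{augmentation})$, which is locally $\cong\underline{\mathbb{Q}}_{U_i}^{\,n-1}$ and hence a local system of rank $n-1$.

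The main obstacle is essentially bookkeeping rather than conceptual: one must check carefully that the chosen local trivializations of $f_\ast\underline{\mathbb{Q}}_X$ are compatible with the prescribed transition data $h_{ij}$ (i.e., that $f_\ast$ genuinely turns the $S_n$-valued cocycle $(h_{ij})$ into the corresponding $GL_n(\mathbb{Q})$-valued cocycle via the permutation representation), and that the augmentation splitting is preserved on all triple overlaps so that the glued maps are well defined and satisfy $p\circ s=\mathrm{id}$. The characteristic-zero hypothesis (division by $n$) is what makes the splitting exist; I would flag that this is where $\mathbb{Q}$-coefficients — as opposed to, say, $\mathbb{Z}$ or $\mathbb{F}_p$ with $p\mid n$ — are used.
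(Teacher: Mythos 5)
Your proposal is correct, and its algebraic heart --- the augmentation (trace) map $(q_{1},\dots,q_{n})\mapsto\sum_{l}q_{l}$ together with the all-ones section, which splits because $n$ is invertible in $\mathbb{Q}$ --- is exactly the mechanism the paper uses. The difference lies in how the two maps are produced as globally defined sheaf morphisms. The paper obtains them directly as the unit $\eta:\underline{\mathbb{Q}}_{Y}\to f_{\ast}f^{\ast}\underline{\mathbb{Q}}_{Y}\cong f_{\ast}\underline{\mathbb{Q}}_{X}$ and counit $\varepsilon:f_{\ast}\underline{\mathbb{Q}}_{X}\to\underline{\mathbb{Q}}_{Y}$ of the $(f^{\ast},f_{\ast})$ adjunction, checks $\varepsilon_{U}\circ\eta_{U}=n\cdot\operatorname{id}$ on sections over a good cover, and concludes that the short exact sequence $0\to\operatorname{ker}(\varepsilon)\to f_{\ast}\underline{\mathbb{Q}}_{X}\to\underline{\mathbb{Q}}_{Y}\to 0$ splits, with $\mathcal{L}=\operatorname{ker}(\varepsilon)$. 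This sidesteps both the passage through $\pi_{1}$-representations (your first route) and the verification of compatibility with the transition cocycle on overlaps (your second route): adjunction hands you global morphisms for free, and the only computation is local. Your first route is valid under the stated hypotheses and has the merit of identifying $\mathcal{L}$ immediately as the standard subrepresentation of the permutation representation on the fiber; your second route is essentially what the paper carries out afterwards, in the gluing lemma and Proposition~\ref{fvert}, as an alternative explicit construction of $\varepsilon$ --- and the observation you flag, that permutation matrices preserve the all-ones vector and the augmentation functional, is precisely the paper's justification that the locally defined trace maps glue. Your remark that the splitting requires $n$ to be invertible in the coefficient ring correctly locates the use of $\mathbb{Q}$.
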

\begin{proof}
	Consider the unit morphism under the $(f^{\ast},f_{\ast})$ adjunction $\underline{\mathbb{Q}}_{Y}\longrightarrow f_{\ast} f^{\ast} \underline{\mathbb{Q}}_{Y}$. For a surjective finite map, the unit morphism is injective. Recall that $f^{\ast} \underline{\mathbb{Q}}_{Y} \cong \underline{\mathbb{Q}}_{X}$ in $\operatorname{Sh}(X)$. Hence, we get an injective morphism of sheaves, $\eta : \underline{\mathbb{Q}}_{Y} \longrightarrow f_{\ast}\underline{\mathbb{Q}}_{X} $. Note that since the covering map is unramified, the sheaf $f_{\ast} \underline{\mathbb{Q}}_{X}$ is a local system of rank $n$. Let $\{U_{i}\}_{i \in I}$ be a good cover of $Y$. Then, the morphism $\eta_{U}: \Gamma(U;\underline{\mathbb{Q}}_{Y}) \longrightarrow \Gamma(U; f_{\ast} \underline{\mathbb{Q}}_{X})$ is given by $\eta_{U}(q)=(q, \dots ,q)$ for $U \in \{U_{i}\}_{i \in I}$. We also consider the counit morphism $\varepsilon:f_{\ast}f^{\ast} \underline{\mathbb{Q}}_{Y}(\cong f_{\ast} \underline{\mathbb{Q}}_{X}) \longrightarrow \underline{\mathbb{Q}}_{Y}$. The section morphism $\varepsilon_{U}:\Gamma(U;f_{\ast}\underline{\mathbb{Q}}_{X}) \longrightarrow \Gamma(U; \underline{\mathbb{Q}}_{Y})$ is given by $\varepsilon_{U}(q_{1}, \dots, q_{n})= \sum_{i=1}^{n}q_{i}$. It follows immediately that $\varepsilon_{U} \circ \eta_{U} = n \cdot \operatorname{id}_{\Gamma(U;\underline{\mathbb{Q}}_{Y})}$. As a result, the short exact sequence of sheaves
	\begin{align*}
		0 \rightarrow \operatorname{ker}(\varepsilon) \rightarrow f_{\ast} \underline{\mathbb{Q}}_{X} \xrightarrow{\varepsilon} \underline{\mathbb{Q}}_{Y} \rightarrow 0,
	\end{align*}  
	splits and the claim follows.
\end{proof}
\begin{remark}
A more general discussion of the unit (restriction) and counit (trace) morphisms for covering spaces can be found in \cite[VII.4]{iversen2012cohomology} by Iversen.
\end{remark}
The above proposition yields a decomposition of the direct image sheaf $f_{\ast}\underline{\mathbb{Q}}_{X}$. However, we aim to use the transition functions to give an explicit description of the sheaf $f_{\ast}\underline{\mathbb{Q}}_{X}$. Based on this, we will also provide an alternative proof of the above proposition, which leads to a concrete description of the local system $\mathcal{L}$.

Let $\{U_{l} \mid l \in I\}$ be a good cover of $Y$ and let $U_{i} \in \{U_{l} \mid l \in I\}$. We start with the natural isomorphism of sheaves $\underline{\mathbb{Q}}_{X} \vert_{\phi_{i}^{-1}(U_{i} \times F)} \cong \underline{\mathbb{Q}}_{f^{-1}(U_{i} )}$ and $(f \vert_{f^{-1}(U_{i} ) })_{\ast} \underline{\mathbb{Q}}_{f^{-1}(U_{i})} \cong \bigoplus_{l=1}^{n} \underline{\mathbb{Q}}_{U_{i}}$. Given $U_{i}$ and $U_{j}$ in $\{ U_{l} \vert l \in I \}$ such that $\cap_{ij}:= U_{i} \cap U_{j} \neq \emptyset$, the homeomorphism $g_{ij}: \cap_{ij} \times F \longrightarrow \cap_{ij} \times F$ induces an isomorphism of sheaves $g_{\ast}: \bigoplus_{l=1}^{n} \underline{\mathbb{Q}}_{\cap_{ij}} \big( \cong ( f \vert_{f^{-1}(\cap_{ij})})_{\ast} \underline{\mathbb{Q}}_{f^{-1}(\cap_{ij})}  \big) \longrightarrow \bigoplus_{l=1}^{n} \underline{\mathbb{Q}}_{\cap_{ij}}$. Consider the inclusions $\iota_{ij}: \cap_{ij} \hookrightarrow U_{i} \cup U_{j}$, $\iota_{i}: U_{i} \hookrightarrow U_{i} \cup U_{j}$, and $\iota_{j}: U_{j} \hookrightarrow U_{i} \cup U_{j}$. We extend the sheaves $\bigoplus_{l=1}^{n} \underline{\mathbb{Q}}_{\cap_{ij}}$, $\bigoplus_{l=1}^{n} \underline{\mathbb{Q}}_{U_{i}}$ and $\bigoplus_{l=1}^{n} \underline{\mathbb{Q}}_{U_{j}}$ by zero using the above inclusions to obtain sheaves in $\operatorname{Sh}(U_{i} \cup U_{j})$, namely $(\iota_{ij})_{!}(\bigoplus_{l=1}^{n} \underline{\mathbb{Q}}_{\cap_{ij}})$, $(\iota_{i})_{!}(\bigoplus_{l=1}^{n} \underline{\mathbb{Q}}_{U_{i}})$, and $(\iota_{j})_{!}(\bigoplus_{l=1}^{n} \underline{\mathbb{Q}}_{U_{j}})$ respectively. Let $k_{j}:\cap_{ij} \hookrightarrow U_{j}$ and $k_{i}: \cap_{ij} \hookrightarrow U_{i}$. It follows that $\iota_{ij}=\iota_{i} \circ k_{i}=\iota_{j} \circ k_{j}$, and hence $(\iota_{ij})_{!}=(\iota_{i})_{!} \circ (k_{i})_{!}=(\iota_{j})_{!} \circ (k_{j})_{!}$. As a result, we obtain the morphism $\alpha: (\iota_{ij})_{!}(\bigoplus_{l=1}^{n}\underline{\mathbb{Q}}_{\cap_{ij}}) \longrightarrow (\iota_{i})_{!}(\bigoplus_{l=1}^{n} \underline{\mathbb{Q}}_{U_{i}})$. Using the isomorphism $g_{\ast}:\bigoplus_{l=1}^{n}\underline{\mathbb{Q}}_{\cap_{ij}} \longrightarrow \bigoplus_{l=1}^{n}\underline{\mathbb{Q}}_{\cap_{ij}}$ and the inclusion $k_{j}:\cap_{ij} \hookrightarrow U_{j}$, we get the morphism $\beta: (\iota_{ij})_{!}(\bigoplus_{l=1}^{n}\underline{\mathbb{Q}}_{\cap_{ij}}) \xrightarrow{\simeq} (\iota_{ij})_{!}(\bigoplus_{l=1}^{n}\underline{\mathbb{Q}}_{\cap_{ij}}) \longrightarrow (\iota_{j})_{!}(\bigoplus_{l=1}^{n}\underline{\mathbb{Q}}_{U_{j}})$. We glue the sheaves $(\iota_{j})_{!}(\bigoplus_{l=1}^{n}\underline{\mathbb{Q}}_{U_{j}})$ and $(\iota_{i})_{!}(\bigoplus_{l=1}^{n}\underline{\mathbb{Q}}_{U_{i}})$ along $(\iota_{ij})_{!}(\bigoplus_{l=1}^{n}\underline{\mathbb{Q}}_{\cap_{ij}})$ using the morphisms $\alpha$ and $\beta$. Hence, we arrive at the following pushout diagram
\begin{align*}
	\begin{tikzcd}[ampersand replacement=\&]
		(\iota_{ij})_{!}(\bigoplus_{l=1}^{n}\underline{\mathbb{Q}}_{\cap_{ij}}) \arrow[r, "\beta"] \arrow[d, "\alpha"'] \& (\iota_{j})_{!}(\bigoplus_{l=1}^{n}\underline{\mathbb{Q}}_{U_{j}}) \arrow[d,] \\
		(\iota_{i})_{!}(\bigoplus_{l=1}^{n}\underline{\mathbb{Q}}_{U_{i}}) \arrow[r,] \& \mathcal{D}
	\end{tikzcd}.
\end{align*}
The following lemma ensures that the sheaf $\mathcal{D}$ is isomorphic to the restriction of the direct image sheaf.
\begin{lemma}
	Let $\mathcal{D}$ be the sheaf constructed above. Then, we have $\mathcal{D} \cong (f \vert_{f^{-1}(U_{i} \cup U_{j})})_{\ast}\underline{\mathbb{Q}}_{f^{-1}(U_{i} \cup U_{j})}$ in $\operatorname{Sh}(U_{i} \cup U_{j})$.
\end{lemma}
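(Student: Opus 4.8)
The plan is to verify the claimed isomorphism by checking it locally — on the open cover $\{U_i, U_j\}$ of $U_i \cup U_j$ together with the overlap $\cap_{ij}$ — and then invoking the gluing (sheaf-patching) property. First I would recall the standard fact that a sheaf on $U_i \cup U_j$ is determined up to canonical isomorphism by its restrictions to $U_i$ and to $U_j$ together with the identification of these two restrictions over $\cap_{ij}$; equivalently, $\operatorname{Sh}(U_i \cup U_j)$ is the $2$-fiber product $\operatorname{Sh}(U_i) \times_{\operatorname{Sh}(\cap_{ij})} \operatorname{Sh}(U_j)$. Thus it suffices to produce compatible isomorphisms on the three pieces. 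On $U_i$ we have, by the extension-by-zero adjunction $(\iota_i^{-1}, (\iota_i)_\ast)$ applied to $(\iota_i)_!$ and the fact that $\iota_i^{-1}(\iota_i)_! = \operatorname{id}$, that $\mathcal{D}|_{U_i} \cong \iota_i^{-1}(\iota_i)_!\big(\bigoplus_{l=1}^n \underline{\mathbb{Q}}_{U_i}\big) \cong \bigoplus_{l=1}^n \underline{\mathbb{Q}}_{U_i} \cong (f|_{f^{-1}(U_i)})_\ast \underline{\mathbb{Q}}_{f^{-1}(U_i)}$, using the trivialization $\phi_i$; similarly for $U_j$ via $\phi_j$.

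The content is then to check that these two identifications agree over $\cap_{ij}$ in the way the pushout prescribes, i.e. that the transition isomorphism built into $\mathcal{D}$ (through $\alpha$ and $\beta$, hence through $g_\ast$) matches the transition isomorphism of $(f|_{f^{-1}(U_i \cup U_j)})_\ast \underline{\mathbb{Q}}$. Concretely, restricting the pushout square to $\cap_{ij}$ and using $\iota_{ij}^{-1}(\iota_i)_! = \iota_{ij}^{-1}(\iota_j)_! = \operatorname{id}$ on $\bigoplus_{l=1}^n \underline{\mathbb{Q}}_{\cap_{ij}}$, the maps $\alpha$ and $\beta$ restrict to $\operatorname{id}$ and to $g_\ast$ respectively, so $\mathcal{D}|_{\cap_{ij}}$ is the pushout of $\operatorname{id}$ and $g_\ast$, which is just $\bigoplus_{l=1}^n \underline{\mathbb{Q}}_{\cap_{ij}}$ with the two charts glued by $g_\ast$. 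On the other side, $(f|_{f^{-1}(\cap_{ij})})_\ast \underline{\mathbb{Q}}_{f^{-1}(\cap_{ij})}$ carries exactly the identification of its two descriptions (via $\phi_i|$ and via $\phi_j|$) induced by $g_{ji} = \phi_j| \circ \phi_i^{-1}|$, which by construction of $g_\ast$ is the sheaf isomorphism induced by that homeomorphism. Matching these — i.e. checking $g_\ast$ is precisely the comparison isomorphism on $\cap_{ij}$ — is the one place where one must be careful about whether to use $g_{ij}$ or $g_{ji}$ and about which summand goes where; this bookkeeping of the cocycle direction is the main (and only real) obstacle.

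Having matched the data on $U_i$, $U_j$, and $\cap_{ij}$ compatibly, the gluing axiom gives a canonical isomorphism $\mathcal{D} \cong (f|_{f^{-1}(U_i \cup U_j)})_\ast \underline{\mathbb{Q}}_{f^{-1}(U_i \cup U_j)}$. I would also note, for use in the forthcoming induction, that this isomorphism is compatible with the chart identifications, so that it can be propagated consistently across a good cover. The only ingredients needed beyond formal nonsense are the base-change/adjunction identities $\iota^{-1}\iota_! = \operatorname{id}$ for open inclusions and the elementary observation that $f_\ast$ commutes with restriction to opens (so that the local pictures of $(f|_{f^{-1}(U_i \cup U_j)})_\ast \underline{\mathbb{Q}}$ over $U_i$, $U_j$, $\cap_{ij}$ are exactly $\bigoplus_{l=1}^n \underline{\mathbb{Q}}$ glued by the transition data), both of which are standard.
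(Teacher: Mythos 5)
Your argument is correct in outline but takes a different route from the paper. The paper uses the universal property of the pushout directly: it defines explicit cocone morphisms $\eta_{i},\eta_{j}$ from the two extended sheaves into $(f\vert_{f^{-1}(U_{i}\cup U_{j})})_{\ast}\underline{\mathbb{Q}}$ (the identity on sections over $U\subseteq U_{j}$, and the map induced by $g_{\ast}$ over $U\subseteq U_{i}$), checks the outer square commutes, obtains the induced map $\mathcal{D}\to (f\vert)_{\ast}\underline{\mathbb{Q}}$, and verifies it is an isomorphism stalk by stalk in the three cases $y\in\cap_{ij}$, $y\in U_{i}\setminus\cap_{ij}$, $y\in U_{j}\setminus\cap_{ij}$. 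You instead invoke descent, identifying $\operatorname{Sh}(U_{i}\cup U_{j})$ with the fiber product of $\operatorname{Sh}(U_{i})$ and $\operatorname{Sh}(U_{j})$ over $\operatorname{Sh}(\cap_{ij})$ and matching the local data. That is a legitimate and arguably cleaner strategy, but two points need tightening. First, your justification of $\mathcal{D}\vert_{U_{i}}\cong\bigoplus_{l}\underline{\mathbb{Q}}_{U_{i}}$ via ``$\iota_{i}^{-1}(\iota_{i})_{!}=\operatorname{id}$'' is not by itself enough: you must use that $\iota^{-1}$ preserves pushouts and then observe that the restriction of $\beta$ to $U_{i}$ (namely $(k_{i})_{!}(g_{\ast})$) is an isomorphism, so the restricted pushout collapses onto the corner $\bigoplus_{l}\underline{\mathbb{Q}}_{U_{i}}$; the analogous collapse over $U_{j}$ uses that the restriction of $\alpha$ is an isomorphism there. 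Second, you explicitly defer the verification that $g_{\ast}$ is precisely the comparison isomorphism between the two trivializations of $(f\vert)_{\ast}\underline{\mathbb{Q}}$ over $\cap_{ij}$, calling it the main obstacle; this is exactly the content the paper encodes in the definition of $\eta_{i}$, so it should be carried out rather than flagged. With those two steps filled in, your proof is complete and yields the same canonical isomorphism; the descent formulation has the advantage that compatibility across a good cover, needed for the subsequent induction, is more transparent, whereas the paper's stalkwise check is more elementary and self-contained.
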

\begin{proof}
Consider the diagram
		\begin{align*}		\begin{tikzcd}[ampersand replacement=\&]
		(\iota_{ij})_{!}(\bigoplus_{l=1}^{n}\underline{\mathbb{Q}}_{\cap_{ij}}) \arrow[r, "\beta"] \arrow[d, "\alpha"'] \& (\iota_{j})_{!}(\bigoplus_{l=1}^{n}\underline{\mathbb{Q}}_{U_{j}}) \arrow[d,] \arrow[ddr,bend left=15, "\eta_{j}"] \& \\
		(\iota_{i})_{!}(\bigoplus_{l=1}^{n}\underline{\mathbb{Q}}_{U_{i}}) \arrow[r,] \arrow[rrd,bend right=15,"\eta_{i}" '] \& \mathcal{D} \arrow[dr, dashed, "\exists! "] \& \\ \& \& (f \vert_{f^{-1}(U_{i} \cup U_{j})})_{\ast}\underline{\mathbb{Q}}_{f^{-1}(U_{i} \cup U_{j})}
	\end{tikzcd},
	\end{align*}
	where we define the morphisms $\eta_{i}$ and $\eta_{j}$ as follows. For $U \subseteq U_{j}$ connected, the section homomorphism $\Gamma(U;(\iota_{j})_{!}(\bigoplus_{l=1}^{n}\underline{\mathbb{Q}}_{U_{j}})) (\cong \mathbb{Q}^{n}) \longrightarrow \Gamma(U;(f \vert_{f^{-1}(U_{i} \cup U_{j})})_{\ast}\underline{\mathbb{Q}}_{f^{-1}(U_{i} \cup U_{j})}) (\cong \mathbb{Q}^{n})$ is the identity. Otherwise, if $U \nsubseteq U_{j}$ is connected, then  $\Gamma(U;(\iota_{j})_{!}(\bigoplus_{l=1}^{n}\underline{\mathbb{Q}}_{U_{j}}))=0$ and the induced homomorphisms on sections are the trivial homomorphisms. On the other hand, for an open connected $U \subseteq U_{i}$, the homomorphism $\Gamma(U;(\iota_{i})_{!}(\bigoplus_{l=1}^{n}\underline{\mathbb{Q}}_{U_{i}})) (\cong \mathbb{Q}^{n}) \longrightarrow \Gamma(U;(f \vert_{f^{-1}(U_{i} \cup U_{j})})_{\ast}\underline{\mathbb{Q}}_{f^{-1}(U_{i} \cup U_{j})}) (\cong \mathbb{Q}^{n})$ is defined to be the isomorphism induced by $g_{\ast}:\bigoplus_{l=1}^{n}\underline{\mathbb{Q}}_{\cap_{ij}} \longrightarrow \bigoplus_{l=1}^{n}\underline{\mathbb{Q}}_{\cap_{ij}}$ on the stalks. Otherwise, if $U \nsubseteq U_{i}$ and $U$ is connected the induced homomorphisms on sections are the trivial homomorphisms. One can easily check that the above section homomorphisms define sheaf morphisms $\eta_{i}$ and $\eta_{j}$. The commutativity of the outer diagram follows directly from the definitions. By the universal property of pushouts, there exists a unique morphism $\mathcal{D} \longrightarrow  (f \vert_{f^{-1}(U_{i} \cup U_{j})})_{\ast}\underline{\mathbb{Q}}_{f^{-1}(U_{i} \cup U_{j})}$, as indicated in the diagram. It is straightforward to prove that the obtained morphism induces isomorphisms on stalks; the claim follows by considering the three cases $y \in \cap_{ij}$, $y \in (U_{i} \setminus \cap_{ij})$, and $y \in (U_{j} \setminus \cap_{ij})$.
\end{proof}
The above lemma can be applied inductively to construct the direct image sheaf $f_{\ast}\underline{\mathbb{Q}}_{X}$.

\begin{proposition}\label{fvert}
		Let $f:X \longrightarrow Y$ be a covering map of Hausdorff, path-connected, locally contractible 
		and locally compact topological spaces of degree $n$. There is a morphism $\varepsilon: f_{\ast}\underline{\mathbb{Q}}_{X} \longrightarrow \underline{\mathbb{Q}}_{Y}$ such that $\mathcal{L} \cong \operatorname{ker}(\varepsilon)$, where the local system $\mathcal{L}$ is defined in Equation \ref{Decomp1}.
\end{proposition}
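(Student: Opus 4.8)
The plan is to produce the trace morphism $\varepsilon\colon f_{\ast}\underline{\mathbb{Q}}_{X}\longrightarrow\underline{\mathbb{Q}}_{Y}$ globally, exactly as in the first proof of Proposition~\ref{PropDecomp1}, and then to identify its kernel with the local system $\mathcal{L}$ furnished by the explicit gluing construction of the preceding lemma. Concretely, I would take $\varepsilon$ to be the counit of the $(f^{\ast},f_{\ast})$-adjunction applied to $\underline{\mathbb{Q}}_{Y}$, using $f^{\ast}\underline{\mathbb{Q}}_{Y}\cong\underline{\mathbb{Q}}_{X}$; on a connected evenly-covered open $U$ this is the summation map $\varepsilon_{U}(q_{1},\dots,q_{n})=\sum_{l=1}^{n}q_{l}$. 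Since $f$ is a finite covering the unit $\eta\colon\underline{\mathbb{Q}}_{Y}\to f_{\ast}\underline{\mathbb{Q}}_{X}$ is a split injection with $\varepsilon\circ\eta=n\cdot\operatorname{id}$, so $\varepsilon$ is surjective and $f_{\ast}\underline{\mathbb{Q}}_{X}\cong\underline{\mathbb{Q}}_{Y}\oplus\ker(\varepsilon)$ in $\operatorname{Sh}(Y)$. By the uniqueness of complements in such a split exact sequence it then suffices to check that $\ker(\varepsilon)$ is a local system of rank $n-1$ agreeing with the $\mathcal{L}$ of Equation~\ref{Decomp1}; this is immediate since the decomposition $f_{\ast}\underline{\mathbb{Q}}_{X}\cong\underline{\mathbb{Q}}_{Y}\oplus\mathcal{L}$ of Proposition~\ref{PropDecomp1} is obtained from the \emph{same} morphism $\varepsilon$.

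The more substantive content is to see that this kernel is the locally constant sheaf whose transition data are the permutation-action matrices $h_{ij}$ restricted to the standard complementary representation. For this I would work on the good cover $\{U_{i}\}_{i\in I}$ and unwind the lemma's gluing: over each $U_{i}$ we have $(f|_{f^{-1}(U_{i})})_{\ast}\underline{\mathbb{Q}}_{f^{-1}(U_{i})}\cong\bigoplus_{l=1}^{n}\underline{\mathbb{Q}}_{U_{i}}$, the section map $\varepsilon$ is $(q_{1},\dots,q_{n})\mapsto\sum q_{l}$, and hence $\ker(\varepsilon)|_{U_{i}}\cong\underline{\mathbb{Q}}_{U_{i}}^{\,n-1}$ via, say, the coordinates $(q_{1}-q_{n},\dots,q_{n-1}-q_{n})$. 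On overlaps $\cap_{ij}$ the isomorphism $g_{\ast}$ induced by $g_{ji}=\phi_{j}\circ\phi_{i}^{-1}$ permutes the $n$ summands by $h_{ij}$; since $\varepsilon$ is $S_{n}$-invariant, $g_{\ast}$ preserves $\ker(\varepsilon)$, and the induced transition functions on the $\underline{\mathbb{Q}}^{\,n-1}$-charts are precisely the matrices of the standard $(n-1)$-dimensional summand of the permutation representation of $S_{n}$ over $\mathbb{Q}$. Because these transition matrices satisfy the cocycle condition (inherited from the $h_{ij}$), they glue to a well-defined local system, and by construction it is the kernel of the globally defined $\varepsilon$; this is the promised explicit description of $\mathcal{L}$.

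I expect the main obstacle to be bookkeeping rather than conceptual: one must check compatibility of the local identifications $\ker(\varepsilon)|_{U_{i}}\cong\underline{\mathbb{Q}}_{U_{i}}^{\,n-1}$ with the pushout gluing of the lemma, i.e.\ that the morphism $\mathcal{D}\xrightarrow{\sim}(f|_{f^{-1}(U_{i}\cup U_{j})})_{\ast}\underline{\mathbb{Q}}$ carries the subsheaf $\ker(\varepsilon)$ assembled from the two charts onto the kernel computed directly on $U_{i}\cup U_{j}$, and that iterating over the whole cover is consistent. The one genuine point to verify is that $g_{\ast}$ indeed acts by permuting summands (so that $\varepsilon$-invariance is automatic); this follows from the fact that $g_{ji}$ covers the identity of $\cap_{ij}$ and permutes the sheets $F$, so on the sheaf level it is the permutation functoriality of $\bigoplus_{l}\underline{\mathbb{Q}}$. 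Granting this, the rank-$(n-1)$ claim and the agreement with $\mathcal{L}$ follow, and the proof is complete.
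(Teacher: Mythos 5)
Your proposal is correct and follows essentially the same route as the paper: the key point in both is that the local trace maps $(s_{1},\dots,s_{n})\mapsto\sum_{l}s_{l}$ are invariant under the permutation matrices coming from the transition functions, so they assemble into a surjection $\varepsilon\colon f_{\ast}\underline{\mathbb{Q}}_{X}\to\underline{\mathbb{Q}}_{Y}$ whose kernel is the complement $\mathcal{L}$ from the splitting in Proposition~\ref{PropDecomp1}. The only cosmetic difference is that you define $\varepsilon$ globally as the adjunction counit and then verify compatibility with the gluing, whereas the paper builds $\varepsilon$ from the local pieces via the universal property of the pushout; your added identification of the transition functions of $\ker(\varepsilon)$ with the standard $(n-1)$-dimensional summand of the permutation representation is a correct refinement beyond what the paper records.
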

\begin{remark}
	The above proposition follows directly from the splitting of the short exact sequence $0 \rightarrow \operatorname{ker}(\varepsilon) \rightarrow f_{\ast} \underline{\mathbb{Q}}_{X} \xrightarrow{\varepsilon} \underline{\mathbb{Q}}_{Y} \rightarrow 0$ presented in the proof of Proposition \ref{PropDecomp1}. However, our goal here is to provide an alternative proof using the permutation matrices $\sigma \in S_{n}$ to demonstrate the existence of the morphism $\varepsilon:f_{\ast} \underline{\mathbb{Q}}_{X} \longrightarrow \underline{\mathbb{Q}}_{Y}$.  
\end{remark}
\begin{proof}
	Let $\{U_{i}\}_{i \in I}$ be a good cover of $Y$. Let the open sets $U_{i}, U_{j} \in \{U_{l}\}_{l \in I}$ have a non-empty intersection. We define $\cap_{ij}:= U_{i} \cap U_{j} $. Consider the pushout diagram
		\begin{align*}		\begin{tikzcd}[ampersand replacement=\&]
			(\iota_{ij})_{!}(\bigoplus_{l=1}^{n}\underline{\mathbb{Q}}_{\cap_{ij}}) \arrow[r, "\beta"] \arrow[d, "\alpha"'] \& (\iota_{j})_{!}(\bigoplus_{l=1}^{n}\underline{\mathbb{Q}}_{U_{j}}) \arrow[d,] \arrow[ddr,bend left=15, "\varepsilon_{j}"] \& \\
			(\iota_{i})_{!}(\bigoplus_{l=1}^{n}\underline{\mathbb{Q}}_{U_{i}}) \arrow[r,] \arrow[rrd,bend right=15,"\varepsilon_{i}" '] \& (f \vert_{f^{-1}(U_{i} \cup U_{j})})_{\ast}\underline{\mathbb{Q}}_{f^{-1}(U_{i} \cup U_{j})} \arrow[dr, dashed, "\exists! "] \& \\ \& \& \underline{\mathbb{Q}}_{U_{i} \cup U_{j}}
		\end{tikzcd},
	\end{align*}
	where the morphisms $\alpha$ and $\beta$ are defined above. We define the morphism $\varepsilon_{i}$ as follows. For an open connected $U \subseteq U_{i}$, the homomorphism of sections $\varepsilon_{i}: \Gamma(U; (\iota_{i})_{!}(\bigoplus_{l=1}^{n} \underline{ \mathbb{Q}}_{U_{i}}  ) ) (\cong \mathbb{Q}^{n}) \longrightarrow \Gamma(U;\underline{\mathbb{Q}}_{U_{i} \cup U_{j}}) (\cong \mathbb{Q})$ is defined to be $\varepsilon_{i}(s_{1}, \dots , s_{n})= \sum_{l=1}^{n}s_{l}$. Otherwise, if $U \nsubseteq U_{i}$ and $U$ is connected the homomorphism of sections $\varepsilon_{i}: \Gamma(U; (\iota_{i})_{!}(\bigoplus_{l=1}^{n} \underline{ \mathbb{Q}}_{U_{i}}  )) (= 0) \longrightarrow \Gamma(U;\underline{\mathbb{Q}}_{U_{i} \cup U_{j}}) (\cong \mathbb{Q})$ is the trivial homomorphism. It can be easily checked that $\varepsilon_{i}$ is a well-defined morphism of sheaves. We define the morphism $\varepsilon_{j}$, similarly. The commutativity of the outer diagram follows from the following consideration. Recall that the transition functions are given by $g_{ij}: \cap_{ij} \longrightarrow \operatorname{Aut}(F) \cong S_{n}$, where $F = f^{-1}(y_{0})$ for a fixed $y_{0} \in Y$. This topological data is encoded in the morphism $\beta$. To be more precise, for an open connected $U \subseteq \cap_{ij}$, the homomorphism $\Gamma(U;(\iota_{ij})_{!}(\bigoplus_{l=1}^{n}\underline{\mathbb{Q}}_{\cap_{ij}}))( \cong \mathbb{Q}^{n}) \longrightarrow \Gamma(U;(\iota_{j})_{!}(\bigoplus_{l=1}^{n}\underline{\mathbb{Q}}_{U_{j}}))(\cong \mathbb{Q}^{n})$ is given by a permutation matrix $\sigma \in S_{n} \subset \operatorname{GL}_{n}(\mathbb{Q})$. But notice that $\sum_{l=1}^{n}s_{l} = \sum_{l=1}^{n}s_{\sigma(l)}$ for all $(s_{1}, \dots , s_{n}) \in \mathbb{Q}^{n}$. By the universal property of pushouts, there is a unique morphism of sheaves $\varepsilon_{ij}: (f \vert_{f^{-1}(U_{i} \cup U_{j})})_{\ast}\underline{\mathbb{Q}}_{f^{-1}(U_{i} \cup U_{j})} \longrightarrow \underline{\mathbb{Q}}_{U_{i} \cup U_{j}}$ that makes the above diagram commutative. The morphism $\varepsilon_{ij}$ is surjective on each stalk and hence surjective. The morphism $\varepsilon: f_{\ast}\underline{\mathbb{Q}}_{X} \longrightarrow \underline{\mathbb{Q}}_{Y}$ can be constructed inductively, using the above procedure. From Equality \ref{Decomp1} it follows that $\operatorname{ker}(\varepsilon) \cong \mathcal{L}$.
\end{proof}

\begin{remark}
	Recall that the assignment $U \longmapsto \operatorname{ker}\big( (\Gamma(U;f_{\ast} \underline{\mathbb{Q}} _{X}) \xrightarrow{\varepsilon_{U}} \Gamma(U; \underline{\mathbb{Q}}_{Y})\big)$ defines a sheaf. Hence, for a good cover of $Y$, namely $\{U_{l}\}_{l \in I}$, we obtain
	\begin{align*}
		\Gamma(U_{i}; \mathcal{L})= \{ (s_{1}, \dots , s_{n}) \in \Gamma(U_{i};f_{\ast}\underline{\mathbb{Q}}_{X})\cong \bigoplus_{l=1}^{n} \Gamma(U_{i};\underline{\mathbb{Q}}_{Y}) \mid \sum_{l=1}^{n}s_{l}=0\}  \; \text{for any} \; U_{i} \in \{U_{l}\}_{l \in I} 
	\end{align*}
\end{remark}

\section{A Decomposition Theorem for Topological Branched Coverings}\label{DecompBranchedMfds}
In this section, we extend our discussion to the case of branched coverings. Let $f:X \longrightarrow Y$ be a branched covering of Hausdorff, path-connected, locally contractible 
and locally compact topological spaces. Let $R \subset Y$ be the branch locus, and $B:= f^{-1}(R) \subset X$. The main goal of this section is to show that the derived direct image sheaf $Rf_{\ast} \underline{\mathbb{Q}}_{X}$ decomposes as the direct sum of the constant sheaf $\underline{\mathbb{Q}}_{Y}$ and a constructible sheaf on $Y$. We will also provide an explicit construction of the constructible sheaf appearing in the decomposition. Consider the following commutative diagram
	\begin{align*}		\begin{tikzcd}[ampersand replacement=\&]
		X \setminus B \arrow[r,  hookrightarrow,"i"] \arrow[d, "f \vert"'] \& X \arrow[d,"f"]   \\
		Y \setminus R \arrow[r, hookrightarrow,"j"]  \& Y   
	\end{tikzcd},
\end{align*}
Using the result from the previous section, we obtain $j_{\ast} \circ (f\vert)_{\ast}\underline{\mathbb{Q}}_{X \setminus B} \cong j_{\ast}(\underline{\mathbb{Q}}_{Y \setminus R} \oplus \mathcal{L} ) \cong j_{\ast} (\underline{\mathbb{Q}}_{Y \setminus R}) \oplus j_{\ast}\mathcal{L}$, where the local system $\mathcal{L}$ was constructed in the previous section. By the functoriality of the direct image, we then have $ f_{\ast} \circ i_{\ast} \underline{\mathbb{Q}}_{X \setminus B}  \cong j_{\ast} (\underline{\mathbb{Q}}_{Y \setminus R}) \oplus j_{\ast}\mathcal{L}$. The direct images of the constant sheaves on dense open subspaces under the inclusions $i$ and $j$ do not always yield the constant sheaf on the target space. However, we will give a sufficient condition under which the sheaves $j_{\ast} \underline{\mathbb{Q}}_{Y \setminus R}$ and $i_{\ast} \underline{\mathbb{Q}}_{X \setminus B}$ can be expressed as a direct sum of the constant sheaves and constructible sheaves on the respective target spaces. We begin by showing that one can always find monomorphisms $\underline{\mathbb{Q}}_{X} \longrightarrow i_{\ast}\underline{\mathbb{Q}}_{X \setminus B}$ and $\underline{\mathbb{Q}}_{Y} \longrightarrow j_{\ast}\underline{\mathbb{Q}}_{Y \setminus R}$.
\begin{lemma}\label{ShExSeq}
	Let $f:X \longrightarrow Y$ be a branched covering of Hausdorff, path-connected, locally contractible 
	and locally compact topological spaces. Let $R \subset Y$ denote the branch locus of $f$ and set $B:= f^{-1}(R)$. Then there are short exact sequences of sheaves
	\begin{align*}
		0 \longrightarrow \underline{\mathbb{Q}}_{X} \longrightarrow i_{\ast} \underline{\mathbb{Q}}_{X \setminus B} \longrightarrow \operatorname{Coker}(\underline{\mathbb{Q}}_{X} \rightarrow i_{\ast} \underline{\mathbb{Q}}_{X \setminus B}) \longrightarrow 0, \\
		0 \longrightarrow \underline{\mathbb{Q}}_{Y} \longrightarrow j_{\ast} \underline{\mathbb{Q}}_{Y \setminus R} \longrightarrow \operatorname{Coker}(\underline{\mathbb{Q}}_{Y} \rightarrow j_{\ast} \underline{\mathbb{Q}}_{Y \setminus R}) \longrightarrow 0.
	\end{align*}
\end{lemma}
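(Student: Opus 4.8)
The plan is to reduce both assertions to a single statement: that the canonical maps $\underline{\mathbb{Q}}_{X} \to i_{\ast} i^{\ast} \underline{\mathbb{Q}}_{X} \cong i_{\ast}\underline{\mathbb{Q}}_{X \setminus B}$ and $\underline{\mathbb{Q}}_{Y} \to j_{\ast} j^{\ast}\underline{\mathbb{Q}}_{Y} \cong j_{\ast}\underline{\mathbb{Q}}_{Y \setminus R}$ are monomorphisms of sheaves. On sections over an open set $U$ these are simply the restriction homomorphisms $s \mapsto s\vert_{U \cap (X \setminus B)}$, respectively $s \mapsto s\vert_{U \cap (Y \setminus R)}$, so there is nothing to construct. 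Granting injectivity, each sequence in the statement is exact: exactness at the left term is precisely that injectivity, while exactness at the middle and right terms is the definition of the cokernel in the abelian category of sheaves. Thus the entire content of the lemma is the injectivity of these two restriction morphisms.

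For the injectivity I would isolate and apply twice the following elementary fact: \emph{if $\mathcal{F}$ is a locally constant sheaf on a locally connected space $Z$ and $k \colon V \hookrightarrow Z$ is the inclusion of a dense open subset, then the unit $\mathcal{F} \to k_{\ast}k^{\ast}\mathcal{F}$ is a monomorphism.} Indeed it suffices to check injectivity on sections over each open $U \subseteq Z$, where the map is restriction $\mathcal{F}(U) \to \mathcal{F}(U \cap V)$; if $s \in \mathcal{F}(U)$ restricts to $0$ on $U \cap V$, then, since $\mathcal{F}$ is locally constant and $Z$ locally connected, the set $\{x \in U : s_{x} \neq 0\}$ is both open and closed in $U$, hence a union of connected components of $U$, and it is disjoint from the dense subset $U \cap V$; therefore it is empty and $s = 0$. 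Applying this with $(Z,V,k) = (X, X \setminus B, i)$ and with $(Z,V,k) = (Y, Y \setminus R, j)$ (both $X$ and $Y$ are locally contractible, hence locally connected, and $\underline{\mathbb{Q}}_{X}$, $\underline{\mathbb{Q}}_{Y}$ are locally constant) produces exactly the two desired monomorphisms, \emph{provided} $X \setminus B$ is dense in $X$ and $Y \setminus R$ is dense in $Y$.

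The decisive point, and the step I expect to be the main obstacle, is therefore density of the two regular sets. For $Y \setminus R$ this is immediate: $R$ is the minimal branch locus, hence closed, and it is nowhere dense by definition, so $Y \setminus R = Y \setminus \overline{R}$ is dense. For $X \setminus B = f^{-1}(Y \setminus R)$ one must show that $B = f^{-1}(R)$, which is closed since $R$ is, has empty interior; equivalently, one must rule out a non-empty open $W \subseteq f^{-1}(R)$. Here I would use that $f$ is proper, hence a closed map since the spaces are locally compact Hausdorff: then $f(X \setminus W)$ is closed, its complement $V := Y \setminus f(X \setminus W)$ is open, and every $y \in V$ satisfies $\emptyset \neq f^{-1}(y) \subseteq W \subseteq f^{-1}(R)$ by surjectivity, so $V \subseteq R$ and hence $V = \emptyset$ because $R$ is nowhere dense; that is, $f(X \setminus W) = Y$. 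In the standing setting of the paper, where $f$ is a locally flat branched covering of topological pseudomanifolds and $R \hookrightarrow Y$, $f^{-1}(R) \hookrightarrow X$ are locally flat of codimension $2$, this already forces $W = \emptyset$, since $B = f^{-1}(R)$ is then a locally flat codimension-$2$ subspace of the pseudomanifold $X$ and so is nowhere dense outright; the closed-map argument records, in addition, that a proper surjection cannot collapse an open set into the nowhere dense set $R$. With density of both regular sets in hand, the two monomorphisms follow from the fact recalled in the previous paragraph, and completing them by the respective cokernels yields the two short exact sequences of the lemma.
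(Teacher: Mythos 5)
Your proof is correct and takes essentially the same route as the paper's: both reduce the lemma to the injectivity of the adjunction units $\underline{\mathbb{Q}}_{X} \to i_{\ast}\underline{\mathbb{Q}}_{X \setminus B}$ and $\underline{\mathbb{Q}}_{Y} \to j_{\ast}\underline{\mathbb{Q}}_{Y \setminus R}$, identify the section maps with restriction morphisms, and conclude injectivity from density of the regular sets. The only difference is that you spell out the two points the paper merely asserts, namely the density of $X \setminus B$ (which you correctly trace back to the standing codimension-$2$ local flatness hypothesis rather than to the closed-map argument alone, which by itself only gives $f(X \setminus W) = Y$) and the injectivity of restriction to a dense open for locally constant sheaves.
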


\begin{proof}
	Consider the adjunction morphism $\underline{\mathbb{Q}}_{X} \longrightarrow i_{\ast} i^{\ast} \underline{\mathbb{Q}}_{X}$. Recall that the open set $U:=X \setminus B \subset X$ is dense. Hence we have the isomorphism $i^{\ast}\underline{\mathbb{Q}}_{X} \cong \underline{\mathbb{Q}}_{X \setminus B}$. For an open subset $V \subset X$ the section morphisms are given by the restriction morphisms
	$\Gamma(V; \underline{\mathbb{Q}}_{X}) \longrightarrow \Gamma(V \cap U; \underline{\mathbb{Q}}_{X})=\Gamma(V; i_{\ast}\underline{\mathbb{Q}}_{U})$. Since $U \subset X$ is dense, the restriction morphisms are injective and the claim follows. Showing the injectivity of the morphism $\underline{\mathbb{Q}}_{Y} \longrightarrow j_{\ast}\underline{\mathbb{Q}}_{Y \setminus R}$ goes along similar lines.
\end{proof}
Having established the above lemma, the question of whether the sheaves $i_{\ast} \underline{\mathbb{Q}}_{X \setminus B}$ and $j_{\ast} \underline{\mathbb{Q}}_{Y \setminus R}$ can be written as direct sums of the constant sheaves $\underline{\mathbb{Q}}_{X}$ and $\underline{\mathbb{Q}}_{Y}$ and some constructible sheaves boils down to whether the above short exact sequences split. As an immediate consequence of the above lemma, we obtain the following corollary.
\begin{corollary}
	In the situation of Lemma \ref{ShExSeq}, if the extension classes $[\eta_{X}] \in \operatorname{Ext}^{1}(\operatorname{Coker}(\underline{\mathbb{Q}}_{X} \rightarrow i_{\ast} \underline{\mathbb{Q}}_{X \setminus B}), \underline{\mathbb{Q}}_{X})$ and $[\eta_{Y}] \in \operatorname{Ext}^{1}(\operatorname{Coker}(\underline{\mathbb{Q}}_{Y} \rightarrow j_{\ast} \underline{\mathbb{Q}}_{Y \setminus R}), \underline{\mathbb{Q}}_{Y})$ of the respective short exact sequences vanish, then the short exact sequences split.
\end{corollary}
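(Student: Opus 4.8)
The plan is to invoke the standard interpretation of the first Yoneda $\operatorname{Ext}$-group in an abelian category; the statement is a formal consequence of homological algebra and requires no genuinely new geometric input. Write $\mathcal{C}_{X} := \operatorname{Coker}(\underline{\mathbb{Q}}_{X} \to i_{\ast}\underline{\mathbb{Q}}_{X\setminus B})$ and $\mathcal{C}_{Y} := \operatorname{Coker}(\underline{\mathbb{Q}}_{Y} \to j_{\ast}\underline{\mathbb{Q}}_{Y\setminus R})$ for the cokernel sheaves appearing in Lemma \ref{ShExSeq}. Recall that $\operatorname{Sh}(X)$, regarded as the category of sheaves of $\mathbb{Q}$-vector spaces, is abelian with enough injectives, so $\operatorname{Ext}^{1}(\mathcal{C}_{X}, \underline{\mathbb{Q}}_{X})$ is defined and, via the Yoneda correspondence, is naturally identified with the group of equivalence classes of extensions $0 \to \underline{\mathbb{Q}}_{X} \to \mathcal{B} \to \mathcal{C}_{X} \to 0$ under Baer sum, the split extension $0 \to \underline{\mathbb{Q}}_{X} \to \underline{\mathbb{Q}}_{X}\oplus\mathcal{C}_{X} \to \mathcal{C}_{X} \to 0$ representing the zero class.

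First I would make the extension classes explicit. Applying $\operatorname{Hom}(\mathcal{C}_{X}, -)$ to the first short exact sequence of Lemma \ref{ShExSeq} produces a long exact sequence containing the connecting homomorphism $\partial \colon \operatorname{Hom}(\mathcal{C}_{X}, \mathcal{C}_{X}) \to \operatorname{Ext}^{1}(\mathcal{C}_{X}, \underline{\mathbb{Q}}_{X})$, and by definition $[\eta_{X}] = \partial(\operatorname{id}_{\mathcal{C}_{X}})$. By exactness, $\partial(\operatorname{id}_{\mathcal{C}_{X}}) = 0$ if and only if $\operatorname{id}_{\mathcal{C}_{X}}$ lies in the image of $\operatorname{Hom}(\mathcal{C}_{X}, i_{\ast}\underline{\mathbb{Q}}_{X\setminus B}) \to \operatorname{Hom}(\mathcal{C}_{X}, \mathcal{C}_{X})$, that is, if and only if the surjection $i_{\ast}\underline{\mathbb{Q}}_{X\setminus B} \to \mathcal{C}_{X}$ admits a section, which is precisely a splitting of the sequence. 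Thus $[\eta_{X}] = 0$ forces the first sequence to split. The identical argument, with $(j, R)$ replacing $(i, B)$, shows that $[\eta_{Y}] = 0$ forces the second sequence to split.

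An equivalent and even more immediate route is to observe that under the Yoneda identification $[\eta_{X}]$ is literally the class of the extension $0 \to \underline{\mathbb{Q}}_{X} \to i_{\ast}\underline{\mathbb{Q}}_{X\setminus B} \to \mathcal{C}_{X} \to 0$ itself; its vanishing says this extension is equivalent to the split one, and the morphism of short exact sequences realizing the equivalence is, by the five lemma, an isomorphism on the middle terms, yielding $i_{\ast}\underline{\mathbb{Q}}_{X\setminus B} \cong \underline{\mathbb{Q}}_{X}\oplus\mathcal{C}_{X}$ compatibly with inclusion and projection (and likewise on $Y$).

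I do not expect any real obstacle here: the only points requiring care are that everything takes place in the abelian category of sheaves of $\mathbb{Q}$-vector spaces on the base space — not in a derived or perverse category — and that the $\operatorname{Ext}$ groups in question are those of that category. The substantive problem, namely actually computing $\operatorname{Ext}^{1}(\mathcal{C}_{X}, \underline{\mathbb{Q}}_{X})$ and $\operatorname{Ext}^{1}(\mathcal{C}_{Y}, \underline{\mathbb{Q}}_{Y})$ and deciding when the classes $[\eta_{X}]$, $[\eta_{Y}]$ vanish, is not part of this corollary; it is deferred to the later analysis of the local monodromy of the unbranched covering around the branch locus, where the local flatness hypothesis and the codimension-$2$ condition on $R$ enter.
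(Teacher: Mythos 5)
Your argument is correct and is exactly the standard Yoneda-$\operatorname{Ext}$ reasoning the paper has in mind: the paper states this corollary without proof, treating it as immediate from the fact that a short exact sequence in the abelian category $\operatorname{Sh}(-)$ splits if and only if its class in $\operatorname{Ext}^{1}$ of the cokernel by the subobject vanishes. Your two routes (the connecting homomorphism applied to the identity, and the direct Yoneda identification) both correctly supply the details the paper omits.
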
  
   The above corollary gives an algebraic approach to the question of decomposing $i_{\ast} \underline{\mathbb{Q}}_{X \setminus B}$ and $j_{\ast} \underline{\mathbb{Q}}_{Y \setminus R}$. However, in what follows, we consider cases in which the morphisms $\underline{\mathbb{Q}}_{X} \longrightarrow i_{\ast} \underline{\mathbb{Q}}_{X \setminus B}$ and $\underline{\mathbb{Q}}_{Y} \longrightarrow j_{\ast} \underline{\mathbb{Q}}_{Y \setminus R}$ are in fact isomorphisms.
\subsection{Derived Direct Image Sheaf under Branched Coverings of Topological Manifolds}
The preceding discussion provides a general framework for the study of the direct image sheaf
 \begin{align*}
 	Rf_{\ast} \underline{\mathbb{Q}}_{X},
 \end{align*}
 where $f:X \longrightarrow Y$ is a branched covering of Hausdorff, path-connected, locally contractible 
 and locally compact topological spaces. It is a classical result that if $X$ and $Y$ are topological manifolds and the branch locus $R \subset Y$ is a closed topologically stratified space, then the real codimension of the branch locus is at least 2 (see, for example, \cite{chernavskii1964finite}, \cite{vaisala1967discrete}). Our arguments will reprove that the real codimension of the branch locus of ramified coverings between closed topological manifolds is in fact equal to 2. We start this subsection by showing that if the map $f:X \longrightarrow Y$ is a branched covering of topological manifolds, the short exact sequences described in Lemma \ref{ShExSeq} split. With this established, we proceed to formulate and prove a decomposition theorem for the direct image sheaf $Rf_{\ast}\underline{\mathbb{Q}}_{X}$.
 
 \begin{definition}
 	Let $X$ be a topological space and $x \in X$. A direct system of open neighborhoods of $x$ is a family
 	$\{V_l\}_{l \in I}$ of open subsets of $X$ with $x \in V_i$ for all $i$,
 	indexed by a directed set $(I,\le)$ (i.e.\ a non‑empty partially ordered set
 	in which any two indices have an upper bound) such that $i \le j \;\Longrightarrow\; V_i \supseteq V_j$.
 	If in addition every open neighbourhood of $x$ contains some $V_i$, the system is called cofinal (or a neighbourhood basis).
 \end{definition}

\begin{lemma}
    Let $U \subset X$ be an open dense subset of the locally path-connected space $X$, and let $i : U \hookrightarrow X$ be the inclusion. Assume that for each
	$x \in X \setminus U$ there is a directed system $\mathcal{N}_x$ of
	path-connected open neighborhoods of \(x\) which forms a neighborhood
	basis, and that for every $V \in \mathcal{N}_x$ the intersection
	$V \cap U$ is path-connected. Then
	\begin{align*}
	i_*\underline{\mathbb{Q}}_U \cong \underline{\mathbb{Q}}_X
	\quad \text{in } \operatorname{Sh}(X).
	\end{align*}
\end{lemma}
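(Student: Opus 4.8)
The plan is to verify the isomorphism $i_*\underline{\mathbb{Q}}_U \cong \underline{\mathbb{Q}}_X$ stalk by stalk, using the canonical adjunction morphism $\theta : \underline{\mathbb{Q}}_X \to i_*\underline{\mathbb{Q}}_U$ (the restriction map), which exists because $U$ is dense and which Lemma \ref{ShExSeq} already shows to be a monomorphism. Since a morphism of sheaves is an isomorphism if and only if it induces isomorphisms on all stalks, it suffices to check surjectivity of $\theta_x$ for every $x \in X$; at points of $U$ this is trivial since $\theta$ restricts to the identity on $U$, so the whole content is concentrated at points $x \in X \setminus U$.

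First I would compute the two stalks at a fixed $x \in X \setminus U$. Because $X$ is locally path-connected, the path-connected open neighborhoods of $x$ are cofinal among all open neighborhoods, so $(\underline{\mathbb{Q}}_X)_x = \varinjlim_{x \in V} \Gamma(V;\underline{\mathbb{Q}}_X) = \mathbb{Q}$, the colimit being taken over any cofinal system — in particular over the directed system $\mathcal{N}_x$, on which every transition map $\Gamma(V';\underline{\mathbb{Q}}_X) \to \Gamma(V;\underline{\mathbb{Q}}_X)$ for $V \subseteq V'$ is the identity $\mathbb{Q} \to \mathbb{Q}$. For the other stalk, $(i_*\underline{\mathbb{Q}}_U)_x = \varinjlim_{x \in V} \Gamma(V; i_*\underline{\mathbb{Q}}_U) = \varinjlim_{x \in V} \Gamma(V \cap U; \underline{\mathbb{Q}}_U)$; restricting this colimit to the cofinal system $\mathcal{N}_x$ and invoking the hypothesis that each $V \cap U$ is path-connected, we get $\Gamma(V \cap U;\underline{\mathbb{Q}}_U) = \mathbb{Q}$ for each $V \in \mathcal{N}_x$, and again every transition map in the system is the identity $\mathbb{Q} \to \mathbb{Q}$ (restriction of a locally constant function along an inclusion of path-connected opens). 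Hence $(i_*\underline{\mathbb{Q}}_U)_x \cong \mathbb{Q}$ as well.

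It then remains to identify $\theta_x$ with the identity under these identifications. Unwinding the definition, $\theta_V : \Gamma(V;\underline{\mathbb{Q}}_X) \to \Gamma(V\cap U;\underline{\mathbb{Q}}_U)$ is the restriction map; for $V \in \mathcal{N}_x$ both sides are $\mathbb{Q}$ (constant functions, since $V$ and $V \cap U$ are path-connected) and restriction of a constant function is the same constant, so $\theta_V = \mathrm{id}_\mathbb{Q}$. Passing to the colimit over the cofinal system $\mathcal{N}_x$ gives $\theta_x = \mathrm{id}_\mathbb{Q}$, which is in particular surjective. Combining this with the stalkwise triviality at points of $U$, we conclude $\theta$ is an isomorphism on all stalks, hence an isomorphism of sheaves, which is the assertion.

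I do not expect any serious obstacle here: the statement is essentially the observation that $i_*$ computes sections, that density makes $i^*\underline{\mathbb{Q}}_X \cong \underline{\mathbb{Q}}_U$, and that the colimit defining a stalk may be taken over any cofinal system. The one point that requires genuine care — and is exactly why the hypothesis on $\mathcal{N}_x$ is imposed — is that without path-connectedness of $V \cap U$ the group $\Gamma(V\cap U;\underline{\mathbb{Q}}_U)$ would be $\mathbb{Q}^{\pi_0(V\cap U)}$ rather than $\mathbb{Q}$, so the stalk $(i_*\underline{\mathbb{Q}}_U)_x$ could be strictly larger than $\mathbb{Q}$ and $\theta_x$ would fail to be surjective; the hypotheses are tailored precisely to exclude this. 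A minor bookkeeping point is to confirm that the directed system $\mathcal{N}_x$ being cofinal lets us replace the colimit over all open neighborhoods by the colimit over $\mathcal{N}_x$ — this is the standard fact that a cofinal functor induces an isomorphism on colimits, and $\mathcal{N}_x$ is cofinal by assumption.
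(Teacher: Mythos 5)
Your proof is correct and follows essentially the same route as the paper's: both use the adjunction morphism $\underline{\mathbb{Q}}_X \to i_*i^*\underline{\mathbb{Q}}_X \cong i_*\underline{\mathbb{Q}}_U$ and verify it is a stalkwise isomorphism, handling $x \in U$ trivially and $x \in X\setminus U$ via the cofinal system $\mathcal{N}_x$ of path-connected neighborhoods with path-connected trace on $U$. Your write-up is in fact somewhat more careful than the paper's about identifying the transition maps and the colimit over the cofinal subsystem, but there is no substantive difference in method.
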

\begin{proof}
Consider the adjunction morphism $\underline{\mathbb{Q}}_{X} \longrightarrow i_{\ast}i^{\ast}\underline{\mathbb{Q}}_{X}$. Since $U \subset X$ is open and dense, we have an isomorphism $\underline{\mathbb{Q}}_{U} \cong i^{\ast}\underline{\mathbb{Q}}_{X}$. We prove that the morphism $\underline{\mathbb{Q}}_{X} \longrightarrow i_{\ast} \underline{\mathbb{Q}}_{U}$ induces isomorphisms on all stalks and therefore defines an isomorphism of sheaves. Since $X$ is locally path-connected, for each $x \in U$ we may choose an open path-connected neighborhood, namely $V$. The section morphism $\Gamma(V; \underline{\mathbb{Q}}_{X})(\cong \mathbb{Q}) \longrightarrow \Gamma(V; i_{\ast}\underline{\mathbb{Q}}_{U})(\cong \mathbb{Q})=\Gamma(V;\underline{\mathbb{Q}}_{X})\cong \mathbb{Q}$ is the identity. Consequently, since the existence of a directed system is ensured by the assumption and it is a neighborhood basis, the induced morphism on the stalks $(\underline{\mathbb{Q}}_{X})_{x}(\cong \mathbb{Q}) \longrightarrow (i_{\ast}\underline{\mathbb{Q}})_{x}(\cong \mathbb{Q})$ is also the identity. For $x \in X \setminus U$, let $V_{i}$ be an open neighborhood containing $x$. By the same argument the induced morphism on the section is the identity, which again implies that the morphism on the stalks is the identity.
\end{proof}
For a given branched covering $f:X \longrightarrow Y$ between closed topological manifolds, assuming that the branch locus $R \subset Y$ is a closed topologically stratified space, one can use the previous lemma to prove that the morphisms $\underline{\mathbb{Q}}_{X} \longrightarrow i_{\ast} \underline{\mathbb{Q}}_{X \setminus B} $ and $\underline{\mathbb{Q}}_{Y} \longrightarrow j_{\ast} \underline{\mathbb{Q}}_{Y \setminus R}$ described in Lemma \ref{ShExSeq} are in fact isomorphisms. This observation paves the way to providing a decomposition theorem for the direct image sheaf $Rf_{\ast} \underline{\mathbb{Q}}_{X}$.

\begin{proposition}\label{IsoIncl}
	Let $f:X \longrightarrow Y$ be a branched covering between closed topological manifolds. Let $R \subset Y$ denote the branch locus, which is a closed topologically stratified space of real codimension equal to or greater than 2. Then the morphisms
	 \begin{align*}
	\underline{\mathbb{Q}}_{X} \longrightarrow i_{\ast}(\underline{\mathbb{Q}}_{X \setminus B}) \; \text{and}\; \underline{\mathbb{Q}}_{Y}\longrightarrow j_{\ast}(\underline{\mathbb{Q}}_{Y \setminus R}),
	\end{align*}
	described in Lemma \ref{ShExSeq}, are isomorphisms.
\end{proposition}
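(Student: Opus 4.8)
The plan is to deduce both isomorphisms from the lemma immediately preceding this proposition, applying it once to the inclusion $j\colon Y\setminus R\hookrightarrow Y$ and once to $i\colon X\setminus B\hookrightarrow X$: that lemma produces precisely the adjunction morphisms $\underline{\mathbb{Q}}_Y\to j_\ast\underline{\mathbb{Q}}_{Y\setminus R}$ and $\underline{\mathbb{Q}}_X\to i_\ast\underline{\mathbb{Q}}_{X\setminus B}$ of Lemma \ref{ShExSeq} and certifies that they are stalkwise isomorphisms. Since $X$ and $Y$ are topological manifolds they are locally path-connected; since $R\subset Y$ is closed and nowhere dense and $B=f^{-1}(R)$ has empty interior, the sets $U=Y\setminus R$ and $X\setminus B=f^{-1}(U)$ are open and dense (the density of $X\setminus B$ in $X$ was already used in Lemma \ref{ShExSeq}). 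So the entire content of the argument is to exhibit, at each point of $R$ and of $B$, a cofinal directed system of path-connected open neighbourhoods whose intersection with the regular locus is again path-connected — and this is exactly where the codimension hypothesis enters.

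For $j$, I would fix $y\in R$, set $n=\dim Y$, and take $\mathcal{N}_y$ to be the family of Euclidean coordinate balls around $y$ in a chart of $Y$; this is a cofinal directed system of path-connected open neighbourhoods of $y$. For $V\in\mathcal{N}_y$ the set $V\cap R$ is closed in $V\cong\mathbb{R}^n$ and has covering dimension $\le\dim R\le n-2$, and I would invoke the classical fact from dimension theory that a closed subset of covering dimension $\le n-2$ does not disconnect $\mathbb{R}^n$ (see the monograph of Hurewicz and Wallman) to conclude that $V\setminus R=V\cap(Y\setminus R)$ is connected; being an open subset of a manifold, hence locally path-connected, it is then path-connected. (Alternatively this can be shown directly by induction on the dimension $d$ of the stratum $T_\beta\ni y$: local flatness identifies a small distinguished neighbourhood with $\mathbb{R}^d\times C(\mathcal{L})$ inside which $R$ becomes $\mathbb{R}^d\times C(\mathcal{L}_R)$ for a codimension $\ge 2$ stratified subspace $\mathcal{L}_R$ of the link $\mathcal{L}\cong S^{n-d-1}$, and $\big(\mathbb{R}^d\times C(\mathcal{L})\big)\setminus\big(\mathbb{R}^d\times C(\mathcal{L}_R)\big)\cong\mathbb{R}^d\times\big((\mathcal{L}\setminus\mathcal{L}_R)\times(0,1)\big)$ is connected since $\mathcal{L}\setminus\mathcal{L}_R$ is, by the inductive hypothesis on the ambient dimension.) The preceding lemma then yields $j_\ast\underline{\mathbb{Q}}_{Y\setminus R}\cong\underline{\mathbb{Q}}_Y$, which is the adjunction morphism of Lemma \ref{ShExSeq}.

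For $i$ I would run the identical argument, once it is known that $B=f^{-1}(R)$ is a closed subset of the $n$-manifold $X$ of covering dimension $\le n-2$. This holds because the branched covering is locally flat, which realises $B$ as a codimension $\ge 2$ stratified subspace of $X$ along the branch locus; equivalently, $f$ is proper with finite fibres, so $f\vert_B\colon B\to R$ is a closed finite-to-one surjection and hence does not raise dimension, giving $\dim B\le\dim R\le n-2$. Then, fixing $b\in B$ and taking $\mathcal{N}_b$ to be the Euclidean coordinate balls around $b$ in a chart of $X$, the connectedness argument of the previous paragraph applies verbatim with $B\cap V$ in place of $R\cap V$, so $V\setminus B$ is path-connected for every $V\in\mathcal{N}_b$. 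The preceding lemma then gives $i_\ast\underline{\mathbb{Q}}_{X\setminus B}\cong\underline{\mathbb{Q}}_X$, finishing the proof.

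The step I expect to be the crux is the dimension-theoretic input, in two related forms: the assertion that a closed set of real codimension $\ge 2$ cannot locally disconnect a topological manifold — where one must be careful to work with \emph{covering} dimension and with \emph{closed} sets, and where the stratified and locally-flat structure of $R$ (resp.\ of $B$) gives a self-contained inductive alternative to citing dimension theory — together with the bound $\dim f^{-1}(R)\le\dim R$, which rests on the finiteness of the fibres of $f$ and the dimension-lowering property of closed finite-to-one maps. The remaining ingredients (local path-connectedness of manifolds, density of the regular loci, cofinality of the systems of coordinate balls, and the identification of the resulting sheaf isomorphisms with the adjunction morphisms of Lemma \ref{ShExSeq}) are routine.
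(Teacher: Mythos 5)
Your argument is correct and follows the same overall skeleton as the paper's proof --- both reduce the statement to the preceding lemma by exhibiting, at each point of $R$ (resp.\ $B$), a cofinal directed system of path-connected open neighborhoods whose intersection with the regular locus is path-connected --- but the crux is handled by a genuinely different tool. The paper uses local flatness to identify a distinguished neighborhood $U$ with $\mathbb{R}^{j}\times C(S^{n-j-1})$ so that $U\setminus(U\cap R)\simeq S^{n-j-1}\setminus L$, and then applies Alexander duality to the link $L$ to get $\widetilde{H}_{0}(S^{n-j-1}\setminus L)=0$; you instead take Euclidean coordinate balls and invoke the Hurewicz--Wallman separation theorem (a region of $\mathbb{R}^{n}$ cannot be disconnected by a subset of covering dimension $\le n-2$), together with the countable sum theorem to bound $\dim R$ by the dimensions of its strata and the dimension-lowering property of closed finite-to-one maps to get $\dim B\le\dim R$. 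Your route is more elementary and strictly more general for this proposition: it does not use local flatness at all, so it would apply even to wildly embedded branch loci such as the Fox--Artin example discussed earlier in the paper. What the paper's approach buys in exchange is the explicit identification of the homotopy type $U\setminus(U\cap R)\simeq S^{n-j-1}\setminus L$, which is not needed for Proposition \ref{IsoIncl} itself but is reused essentially in the monodromy analysis of Proposition \ref{ConstantIndex} and in the main theorems; your proof would leave that identification still to be established later. Your secondary, inductive cone-structure argument is closer in spirit to the paper's but leaves the base of the induction (connectivity of a sphere minus a codimension-$\ge 2$ stratified subset) only sketched, so the dimension-theoretic version should be regarded as your actual proof.
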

	\begin{proof}
		We assume that the real codimension of $R \subset Y$ equals 2. For the cases where $R$ has greater codimension, the proof proceeds along similar lines. For each $y \in R$, choose a small open neighborhood $U$ of $y$ in $Y$ such that $U \cong \mathbb{R}^{n}$ and $U \cap R \cong \mathbb{R}^{j} \times C(L^{(n-2)-j-1})$, where $n= \dim(Y)$ and $L$ is a link of the point $y$ in $R$. The homotopy type of the space $U \setminus (U \cap R)$ will play an important role in our subsequent discussion. In fact, the homotopy type of $U \setminus (U \cap R)$ is entirely determined by the embedding $U \cap R \hookrightarrow U$. Note that from the local flatness assumption, it follows that the inclusion $\mathbb{R}^{j} \hookrightarrow \mathbb{R}^{n}$ is the standard inclusion. Consequently, we can write $\mathbb{R}^{n} \cong \mathbb{R}^{j} \times C( S^{n-j-1} )$ in such a way that the inclusion $ \mathbb{R}^{j} \times C(L) \hookrightarrow \mathbb{R}^{j} \times C( S^{n-j-1} )$, when restricted to the first factor, is a homeomorphism. Using the described embedding, we arrive at
		\begin{align*}
		U \setminus (U \cap R) \cong \mathbb{R}^{j} \times C(S^{n-j-1}) \setminus (\mathbb{R}^{j} \times C(L)) \cong \mathbb{R}^{j} \times (S^{n-j-1} \setminus L) \times (0,1]\simeq S^{n-j-1} \setminus L.
		\end{align*}
		The topological stratified space $L$ is compact and locally contractible, which allows us to use Alexander duality. Using Alexander duality, we obtain 
		\begin{align*}
			\widetilde{H}_{q}(S^{n-j-1} \setminus L ) \cong H^{(n-j-1)-q-1}(L).
		\end{align*}
		For $q=0$, we arrive at $\widetilde{H}_{0}(S^{n-j-1} \setminus L ) \cong H^{n-j-2}(L)=0$, since the topological pseudomanifold $L$ is $(n-j-3)$-dimensional. As a consequence, it follows that the topological space $U \setminus (U \cap R)$ is connected. Let $R=\bigsqcup_{\beta} T_{\beta}$ be the intrinsic stratification of $R$. Choosing $U$ sufficiently small such that $U \cap T_{\beta} \cong \mathbb{R}^{\dim(T_{\beta})}$ and considering that $U \cap T_{\beta} \hookrightarrow U$ is homeomorphic to the standard inclusion implies that the map $f \vert_{f^{-1}(U \cap T_{\beta})}:f^{-1}(U \cap T_{\beta}) \longrightarrow U \cap T_{\beta}$ is a homeomorphism on each connected component of $f^{-1}(U \cap T_{\beta})$. It follows that $B \subset X$ is of codimension 2. Since, by assumption, the inclusion $B \hookrightarrow X$ is locally flat in the sense of Definition \ref{localflatness}, it follows that for each $x \in B$, we can find an open neighborhood $V \cong \mathbb{R}^{n}$ in $X$ such that $V \setminus (V \cap B)$ is connected. For each $x$ and each $y$, choosing the open neighborhoods $U^{\prime} \subseteq U$ and $V^{\prime} \subseteq V$, such that $U^{\prime}$ and $V^{\prime}$ satisfy the above conditions as $U$ and $V$, respectively, we obtain directed systems of open neighborhoods. The claim then follows from the previous lemma.
	\end{proof}
	The following remark briefly describes how to associate a local system to a given representation of a fundamental group.
	\begin{remark}\label{LocalandRep}
		Let $V$ be a vector space. The equivalence between the categories of local systems with stalks isomorphic to $V$ and the category of representations $\rho: \pi_{1}(X,x_{0}) \longrightarrow \operatorname{GL}(V)$ is a well-known result. Let $U \subseteq X$ be an open subset. We aim to describe the group of sections $\Gamma(U; \mathcal{L}_{\rho})$ using the corresponding representation $\rho$. For simplicity, we assume that $X$ is path-connected. We first outline how a local system can be associated with a given representation $\rho:\pi_{1}(X,x_{0}) \longrightarrow \operatorname{GL}(V)$. We further assume that $X$ is locally simply connected. Consider the universal covering $p:\widetilde{X} \longrightarrow X$. Note that the universal cover $\widetilde{X}$ is constructed as
		\begin{align*}
			\widetilde{X}: \sfrac{ \{\gamma: \;\text{ $\gamma$ is a path in $X$ with $\gamma(0)=x_{0}$ }  \} }{ \sim},
		\end{align*} 
	where $\gamma \sim \gamma^{\prime}$ if and only if $\gamma(1) = \gamma^{\prime}(1) $ and $\gamma \simeq \gamma^{\prime}$. Then the universal covering is given by $p([\gamma])=\gamma(1)$. Using the preceding notation, the action of the fundamental group $\pi_{1}(X,x_{0})$ on $\widetilde{X}$ is given by
	\begin{align*}
	\pi_{1}(X,x_{0}) \times \widetilde{X} &\rightarrow \widetilde{X} \\
	([\eta], [\gamma] ) &\mapsto [\eta \ast \gamma].	
	\end{align*}
	We define the flat vector bundle $E_{\rho} = \sfrac{(\widetilde{X} \times V )}{\sim^{\prime}}$, where the equivalence relation $\sim^{\prime}$ is induced by the action of $\pi_{1}(X,x_{0})$ on $\widetilde{X} \times V$ given by
	\begin{align*}
	\pi_{1}(X,x_{0}) \times ( \widetilde{X} \times V) &\rightarrow \widetilde{X} \times V \\
	([\eta], [\gamma] , v ) &\mapsto ([\eta \ast \gamma], \rho([\eta]) \cdot v ).	
	\end{align*}
	The local system $\mathcal{L}_{\rho}$ is the sheaf of locally constant sections of $E_{\rho} \longrightarrow X$. More explicitly, let $U \subseteq X$ be an open subset. An element $s \in \Gamma(U, \mathcal{L}_{\rho} )$ corresponds to a $\pi_{1}$-equivariant locally constant function $f:p^{-1}(U) \longrightarrow V$ satisfying $f([\eta] [\gamma])= \rho([\eta]) \cdot f([\gamma])$ for each $[\eta] \in \pi_{1}(X,x_{0})$ and each $[\gamma] \in p^{-1}(U)$. Now, assume that $U \subseteq X$ is open and path-connected. Choose a base point $u \in U$ and a path $\alpha$ from $x_{0}$ to $u$. Let $i:U \hookrightarrow X$ be the inclusion and $i_{\ast}: \pi_{1}(U,u) \longrightarrow \pi_{1}(X,x_{0})$ the induced homomorphism between fundamental groups. Hence, we have
	\begin{align*}
		\Gamma(U; \mathcal{L}_{\rho})= \{v \in V \vert \rho([\eta])v=v \; \forall \; [\eta] \in \operatorname{Im}(\pi_{1}(U,u)\xrightarrow{i_{\ast}} \pi_{1}(X,x_{0})) \}.
	\end{align*} 
	\end{remark}
	We now prove a proposition that will be essential for the main theorem of this section.
	\begin{proposition}\label{ConstantIndex}
		Let $ f: X \longrightarrow Y $ be a branched covering between closed topological $n$-manifolds. 
		Let the branch locus $ R \subset Y $ be a closed topologically stratified space of codimension 2. 
		Then, for each stratum of $ R $, the cardinality of the fiber $ f^{-1}(y) $ is constant as $ y $ varies within the stratum.
	\end{proposition}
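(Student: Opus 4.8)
The plan is to show that on each stratum $T_\beta$ of (the intrinsic stratification of) the branch locus $R$ the function $y\mapsto\lvert f^{-1}(y)\rvert$ is locally constant; since the strata of a topologically stratified space are connected topological manifolds (Remark \ref{linkandstratum}), this immediately gives the proposition. The geometric content is already contained in the proof of Proposition \ref{IsoIncl}, so the argument is essentially a repackaging of the local picture established there.

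First I would fix a stratum $T_\beta$ and a point $y_0\in T_\beta$. Exactly as in the proof of Proposition \ref{IsoIncl}, the hypothesis that $R$ has codimension $2$ together with local flatness of $R\hookrightarrow Y$ allows me to pick a distinguished neighborhood $U\subset Y$ of $y_0$ with $U\cong\mathbb{R}^n$ for which $U\cap T_\beta\cong\mathbb{R}^{\dim T_\beta}$ is included in $U$ by the standard inclusion; in particular $U\cap T_\beta$ is a connected open neighborhood of $y_0$ in $T_\beta$. The key fact extracted in that proof is that $f\vert\colon f^{-1}(U\cap T_\beta)\to U\cap T_\beta$ restricts to a homeomorphism on each connected component of its source. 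Granting this, for any two points $y,y'\in U\cap T_\beta$ I would build a bijection $\Phi\colon f^{-1}(y)\to f^{-1}(y')$ directly: for $x\in f^{-1}(y)$ let $E_x$ be the component of $f^{-1}(U\cap T_\beta)$ containing $x$ and set $\Phi(x):=\bigl(f\vert_{E_x}\bigr)^{-1}(y')$, which is defined because $f\vert_{E_x}$ is a homeomorphism onto $U\cap T_\beta$. Then $\Phi$ is injective, since two points of $f^{-1}(y)$ with a common image would lie in one and the same component and hence coincide; and $\Phi$ is surjective, since any $x'\in f^{-1}(y')$ lies in a unique component $E$ and $x:=\bigl(f\vert_E\bigr)^{-1}(y)$ satisfies $\Phi(x)=x'$. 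Therefore $\lvert f^{-1}(y)\rvert=\lvert f^{-1}(y')\rvert$ for all $y,y'$ in the neighborhood $U\cap T_\beta$ of $y_0$, which is the desired local constancy; connectedness of $T_\beta$ then finishes the proof.

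If in addition one wants the common fiber cardinality over a stratum to be finite---in fact at most the degree $n$---I would argue separately, using that $f$ is proper (hence closed, as $Y$ is locally compact Hausdorff) and open, and restricts to an $n$-sheeted covering over the dense regular set: given distinct $x_1,\dots,x_m\in f^{-1}(y)$, choose pairwise disjoint open neighborhoods $W_k\ni x_k$; properness gives a neighborhood $N\ni y$ with $f^{-1}(N)\subseteq\bigcup_k W_k$, openness lets me shrink $N$ into $\bigcap_k f(W_k)$, and then any regular point of $N$ has an $n$-element fiber meeting each $W_k$, so $m\le n$. The one point I expect to require care is the appeal to the local structure: both the homeomorphism-on-components statement and (for the finiteness addendum) the openness of $f$ are standard for locally flat branched coverings of manifolds but depend on the codimension-$2$ local models, so they must be cited or reproved; once they are in hand, everything above is formal.
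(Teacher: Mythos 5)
Your argument is correct in outline but takes a genuinely different route from the paper. The paper never builds fiber bijections directly: it identifies $\lvert f^{-1}(y)\rvert$ with $\operatorname{rk}\bigl((f_{\ast}\underline{\mathbb{Q}}_{X})_{y}\bigr)$, uses Propositions \ref{IsoIncl} and \ref{PropDecomp1} to get $f_{\ast}\underline{\mathbb{Q}}_{X}\cong\underline{\mathbb{Q}}_{Y}\oplus j_{\ast}\mathcal{L}$, computes the stalk of $j_{\ast}\mathcal{L}$ as the invariant subspace of the monodromy representation $\rho:\pi_{1}(S^{n-j-1}\setminus L)\to\operatorname{GL}_{k-1}(\mathbb{Q})$ on the link complement, and then shows $\rho_{y}$ and $\rho_{y'}$ are conjugate for $y,y'$ in the same connected component of a stratum by covering a path with distinguished product neighborhoods. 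Your approach is more elementary, but be aware of where its content actually sits: the ``key fact'' you import --- that each connected component of $f^{-1}(U\cap T_{\beta})$ maps homeomorphically \emph{onto} $U\cap T_{\beta}$ --- is already a local form of the conclusion (it says the fiber cardinality over $U\cap T_{\beta}$ equals the number of components), so your proof is essentially an unpacking of that assertion. The paper does state this in the proof of Proposition \ref{IsoIncl}, but only with a one-line justification, and it states it as ``a homeomorphism on each connected component,'' which a priori means onto its image; to get surjectivity onto $U\cap T_{\beta}$ you should add the clopen-image argument (the restriction of $f$ is proper, hence closed, and openness of $f$ makes each component's image open, hence all of the connected set $U\cap T_{\beta}$). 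What you lose relative to the paper is the explicit formula (Equation \ref{rkequ}) expressing the fiber cardinality through the monodromy invariants of $\mathcal{L}$, which the paper reuses afterwards, e.g.\ in the corollary that the branch locus cannot have codimension greater than $2$. Finally, in your finiteness addendum the properness step is misapplied as written (you cannot assume the finitely many chosen points exhaust the fiber before knowing it is finite), but the openness half of that argument already gives the bound $m\le n$ for every finite subset of the fiber, which suffices.
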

	\begin{proof}
	Let $B:=f^{-1}(R)$. Consider the following commutative diagram
	\begin{align*}		\begin{tikzcd}[ampersand replacement=\&]
			X \setminus B \arrow[r,  hookrightarrow,"i"] \arrow[d, "f \vert"'] \& X \arrow[d,"f"]   \\
			Y \setminus R \arrow[r, hookrightarrow,"j"]  \& Y   \\ 
		\end{tikzcd}.
	\end{align*}	
	It follows that we have the identity $f_{\ast} \circ i_{\ast} \underline{\mathbb{Q}}_{X \setminus B}  \cong j_{\ast} \circ (f \vert)_{\ast} \underline{\mathbb{Q}}_{X \setminus B} $ in $\operatorname{Sh}(Y)$. Using Propositions \ref{IsoIncl} and \ref{PropDecomp1}, we obtain $f_{\ast}\underline{\mathbb{Q}}_X \cong \underline{\mathbb{Q}}_Y \oplus j_{\ast}\mathcal{L}$, where the local system $\mathcal{L}$ is described in Equation \ref{Decomp1}. By the definition, we have $\operatorname{rk}((f_{\ast}\underline{\mathbb{Q}}_{X})_{y } )= \operatorname{rk}(\underset{y \in U}{\varinjlim}\, H^{0}(f^{-1}(U) ))=\operatorname{rk}(H^{0}(f^{-1}(y)))$ for a point $y \in R$. Using the previous identity, we arrive at $(f_{\ast} \underline{\mathbb{Q}}_X)_{y}= 1 + \operatorname{rk}((j_{\ast}\mathcal{L})_{y})$. As in the proof of Proposition \ref{IsoIncl}, for each $y \in R$ we choose a small open neighborhood $U$ in $Y$ such that $U \cong \mathbb{R}^{n}$ and $U \cap R \cong \mathbb{R}^{j} \times C(L^{n-2-j-1})$, where the topologically stratified space $L$ is a link of the point $y$ in $R$. Using Remark \ref{LocalandRep}, a local system on the space $U \setminus (U \cap R)$ is given by a representation $\rho: \pi_{1}(S^{n-j-1} \setminus L) \longrightarrow \operatorname{GL}_{k-1}(\mathbb{Q})$, where we used $U\setminus (U \cap R) \simeq S^{n-j-1} \setminus L$, as shown in the proof of Proposition \ref{IsoIncl}. Note that $\operatorname{rk}(\Gamma(U;j_{\ast}\mathcal{L}))= \operatorname{rk} \{v \in \mathbb{Q}^{k-1} \; \vert \; \rho(\gamma)v=v \;\forall \; \gamma \in \pi_{1}(S^{n-j-1} \setminus L) \}$, where $k$ is the index of the unbranched part of the covering, namely $f\vert_{X \setminus B}$. Finally, we arrive at
	\begin{align}\label{rkequ}
		\operatorname{rk}(f_{\ast}(\underline{\mathbb{Q}}_{X})_{y})=1+\operatorname{rk} \{v \in \mathbb{Q}^{k-1} \; \vert \; \rho(\gamma)v=v \;\forall \; \gamma \in \pi_{1}(S^{n-j-1} \setminus L) \}.
	\end{align}
	
	We still must show that for any two points $y, y^{\prime}$ in the same connected component of a stratum $T \subset R$, the dimensions of the invariant subspaces in Equation \ref{rkequ} coincide.  
	By local flatness, for each $y \in T$ we can choose a distinguished neighborhood $U \cong \mathbb{R}^j \times C(S^{n-j-1})$ such that  
	$U \cap R \cong \mathbb{R}^j \times C(L)$ and consequently  
	$U \setminus (U \cap R) \cong \mathbb{R}^j \times (S^{n-j-1} \setminus L) \times (0,1] \simeq S^{n-j-1} \setminus L$.  
	The link complement $S^{n-j-1} \setminus L$ is well-defined up to homotopy for all points in the same connected component of a stratum. The local system $\mathcal{L}\) on \(Y \setminus R$ restricts to a local system on $U \setminus (U \cap R)$.  
	Since $\mathbb{R}^j$ is contractible, any local system on $\mathbb{R}^j \times (S^{n-j-1} \setminus L)$ is isomorphic to the pullback of a local system on $S^{n-j-1} \setminus L$ via the projection $\mathbb{R}^j \times (S^{n-j-1} \setminus L) \longrightarrow S^{n-j-1} \setminus L$.  
	Hence the representation $\rho_y : \pi_1(S^{n-j-1} \setminus L) \longrightarrow \operatorname{GL}_{k-1}(\mathbb{Q})$ associated to $\mathcal{L}$ at $y$ is well-defined up to conjugation; in particular the dimension of the invariant subspace $\operatorname{rk}\{v \in \mathbb{Q}^{k-1} \mid \rho_y(\gamma)v = v \;\forall \gamma\}$ depends only on the conjugacy class of $\rho_y$. Let $y,y^{\prime} \in T$. Now take a path $\gamma : [0,1] \longrightarrow T$ connecting $y$ and $y^{\prime}$.  
	Cover the image of $\gamma$ by finitely many such neighborhoods $U_{i}$ with the product structure.  
	On each overlap $U_{i} \cap U_{i+1}$ the two local descriptions of $\mathcal{L}$ are related by the transition functions of $\mathcal{L}$ (which are isomorphisms of stalks). These transition functions induce isomorphisms between the fibers of $\mathcal{L}$ at nearby points, and therefore conjugate the corresponding representations of $\pi_1(S^{n-j-1} \setminus L)$.  
	Thus the representations $\rho_y$ and $\rho_{y'}$ are conjugate, and consequently the dimensions of their invariant subspaces are equal. This argument shows that the quantity  
	$1 + \operatorname{rk}\{v \in \mathbb{Q}^{k-1} \mid \rho(\gamma)v = v \;\forall \gamma \in \pi_1(S^{n-j-1} \setminus L)\}$ is constant on each connected component of $T$.  
	Therefore the cardinality of the fiber, $\operatorname{rk}(f_*\underline{\mathbb{Q}}_X)_y$, is constant on each connected component of $T$.
   
    It follows that, for a given connected component of a stratum, the rank of the stalks of the sheaf $f_{\ast}(\underline{\mathbb{Q}}_{X})$---and hence the cardinality of the fibers over branch points of the map $f:X \longrightarrow Y$---is uniquely determined by the representation of the local system $\mathcal{L}\vert_{U \setminus (U \cap R)}$ which does not change along a connected component of a stratum of $R$. 	Since the representation of the local system $\mathcal{L}\vert_{U \setminus (U \cap R)}$ does not depend on the chosen stratification of $R$, the above argument remains valid for any stratification of $R$. In fact, refining a stratification of $R$ may add more strata, but the cardinality of the fiber at a given point $y \in R$ does not change.
	\end{proof}
	The following example is a special case of the main theorem of this section.
	
	\begin{example}\label{ExampleMfd}
	Let $f:X \longrightarrow Y$ be a branched covering of topological $n$-manifolds such that the branch locus $R \subset Y$ is a submanifold of codimension 2. Let $B:= f^{-1}(R)$. The goal of this example is to give a decomposition of the complex of sheaves $Rf_{\ast} \underline{\mathbb{Q}}_{X}$. Note that from Proposition \ref{ConstantIndex}, it follows that the map $f\vert_{B}:B \longrightarrow R$ is an unramified covering. As shown in the proof of Proposition \ref{IsoIncl}, for each point $y \in R$ there is a small open neighborhood $U \subset Y$ such that $U \cong \mathbb{R}^{n}$, $U \cap R \cong \mathbb{R}^{n-2}$, $U \setminus (U \cap R) \cong \mathbb{R}^{n} \setminus \mathbb{R}^{n-2}   \simeq S^{1}$, and $f \vert_{f^{-1}(U \cap R)}$ is a trivial covering.

	Let $V \subseteq f^{-1}(U)$ be a connected component. Consider the restricted branched covering $f \vert_V :V \longrightarrow U$ with branch locus $U \cap R$. In other words, we choose $V$ such that for a branch point the cardinality of the fiber over the branch point equals one. Since we are studying branched locally flat coverings in the sense of Definition \ref{BranchLocFlat}, it is possible to choose $U$ sufficiently small such that the map $f \vert_{V \cap f^{-1}(U \cap R) }$ is a homeomorphism and the branching locus $U \cap R$ is contractible. Furthermore, since $U \cap R$ is contractible and $U \cap R \hookrightarrow U$ is the standard inclusion and hence a cofibration, we have $\sfrac{U}{(U \cap R)} \simeq U$. It follows that the map $f \vert_{\operatorname{quo.}}:\sfrac{V}{(V \cap f^{-1}(U \cap R))} \longrightarrow \sfrac{U}{(U \cap R)}$ is a branched covering, the branch locus is a single point and $V \simeq \sfrac{V}{(V \cap f^{-1}(U \cap R))}$. Note that $U \cap R(\cong \mathbb{R}^{n-2}) \hookrightarrow U(\cong \mathbb{R}^{n})$ is the standard inclusion, and hence $U \setminus (U \cap R) \cong (S^{n-1} \setminus S^{n-3}) \times (0,1]$, and $\partial \overline{U} \setminus (\partial (\overline{U}) \cap R) \cong S^{n-1} \setminus S^{n-3}$, with the convention $S^{-1} = \emptyset $. The restriction $f \vert_{(V \cap f^{-1}((\partial \overline{U} \setminus (\partial (\overline{U}) \cap R)) ))}:= f \vert_{\partial}$ is an unramified covering and the map $f \vert_{\operatorname{quo.}}$ is homeomorphic to the cone $C(f \vert_{\partial}):C(V \cap f^{-1}((\partial \overline{U} \setminus (\partial (\overline{U}) \cap R)))) \longrightarrow C(\partial \overline{U} \setminus (\partial (\overline{U}) \cap R))$ which implies $\sfrac{V}{(V \cap f^{-1}(U \cap R))} \cong C(V \cap f^{-1}((\partial \overline{U} \setminus (\partial (\overline{U}) \cap R)))$. As a result, we obtain the homotopy equivalence of the topological spaces $V \simeq \ast$. The previous consideration shows that for each point $r \in R$ there exists an open neighborhood $U( \cong \mathbb{R}^{n}) \subset Y$ such that for each contractible open neighborhood $W \subseteq U$ of $r$, the preimage $f^{-1}(W)$ is contractible. Consider the following commutative diagram
	\begin{align*}		\begin{tikzcd}[ampersand replacement=\&]
			X \setminus B \arrow[r,  hookrightarrow,"i"] \arrow[d, "f \vert"'] \& X \arrow[d,"f"]   \\
			Y \setminus R \arrow[r, hookrightarrow,"j"]  \& Y   \\ 
		\end{tikzcd}.
	\end{align*}
     
     From the commutativity of the diagram, it follows that $Rf_{\ast} \circ Ri_{\ast} \underline{\mathbb{Q}}_{X \setminus B} \simeq Rj_{\ast} \circ R(f \vert_{X \setminus B})_{\ast} \underline{\mathbb{Q}}_{X \setminus B}$. Recall that we have $R^{l}(f \vert_{X \setminus B})_{\ast}\underline{\mathbb{Q}}_{X \setminus B} \simeq 0$ for $l>0$ and $(f \vert_{X \setminus B})_{\ast} \underline{\mathbb{Q}}_{X \setminus B} \simeq \underline{\mathbb{Q}}_{Y \setminus R} \oplus \mathcal{L}$, where the local system $\mathcal{L}$ is defined in Equation \ref{Decomp1}. As a result, we obtain $Rf_{\ast} \circ Ri_{\ast} \underline{\mathbb{Q}}_{X \setminus B} \simeq Rj_{\ast} \underline{\mathbb{Q}}_{Y \setminus R} \oplus Rj_{\ast} \mathcal{L}$. Thus, we get $\tau_{\leq -n}Rf_{\ast} \circ Ri_{\ast} \underline{\mathbb{Q}}_{X \setminus B}[n] \simeq \tau_{\leq -n}Rj_{\ast} \underline{\mathbb{Q}}_{Y \setminus R}[n] \oplus \tau_{\leq -n} Rj_{\ast} \mathcal{L}[n]$.

    Consider the adjunction between the derived direct image $Rf_{\ast}$ and the derived inverse image $Lf^{\ast}$, together with the fact that $Lf^{\ast}$ commutes with truncation functors (since it is exact). Hence, for the truncation $\tau_{\leq -n}$, the counit of the adjunction gives a natural morphism
       
   \begin{align*}
   \tau_{\leq -n} Rf_{\ast} Ri_{\ast} \underline{\mathbb{Q}}_{X \setminus B}[n] \longrightarrow Rf_{\ast} \tau_{\leq -n} Ri_{\ast} \underline{\mathbb{Q}}_{X \setminus B}[n]
   \end{align*}
   Assume that we have shown that the filtrations $Y \supset R$ and $X \supset B$ are stratifications of $X$ and $Y$. It follows that $\tau_{\leq -n} Ri_{\ast} \underline{\mathbb{Q}}_{X \setminus B}[n] \simeq \underline{\mathbb{Q}}_{X}[n]$. The existence of sufficiently small contractible open neighborhoods $W$, as constructed above, such that $f^{-1}(W)$ is contractible as well, implies that $R^{l}f_{\ast}(\tau_{\leq -n} Ri_{\ast} \underline{\mathbb{Q}}_{X \setminus B}[n])_{y}=0$ for $l > 0$ and $y \in Y$. For $l=0$, since $f$ is a finite map, the above natural morphism induces an isomorphism on the stalks. In fact, the rank of the stalk at each point equals the cardinality of its preimage under the map 
   $f$. It follows that the induced morphism in the derived category is a quasi-isomorphism. 
   Hence, we arrive at $Rf_{\ast} \circ \tau_{\leq -n}Ri_{\ast} \underline{\mathbb{Q}}_{X \setminus B}[n] \simeq \tau_{\leq -n}Rj_{\ast} \underline{\mathbb{Q}}_{Y \setminus R}[n] \oplus \tau_{\leq -n}Rj_{\ast} \mathcal{L}[n]$. Since $X$ and $Y$ are manifolds, we obtain
     \begin{align*}
     Rf_{\ast}  \underline{\mathbb{Q}}_{X}[n] \simeq \underline{\mathbb{Q}}_{Y}[n] \oplus IC_{Y}(\mathcal{L}).
     \end{align*}
	\end{example}
	In the next step, we relax the assumption on the branch locus $R \subset Y$ in the above example and consider an arbitrary topologically stratified space $R$ as the branch locus.
	\begin{theorem}\label{MainTheo1}
		Let $f:X \longrightarrow Y$ be a branched covering between closed topological $n$-manifolds. Let the closed topologically stratified space $R \subset Y$ be the branch locus, with the stratification $R=R_{n-2} \supset R_{n-3} \supset \cdots$ and set $B:= f^{-1}(R)$. Then the direct image sheaf complex $Rf_{\ast}\underline{\mathbb{Q}}_{X} \in D^{b}_{c}(Y)$ can be decomposed as
		\begin{align}
			Rf_{\ast}\underline{\mathbb{Q}}_{X}[n] \simeq \underline{\mathbb{Q}}_{Y}[n] \oplus IC_{Y}(\mathcal{L}),
		\end{align}
		where the local system $\mathcal{L}$ is defined in Equation \ref{Decomp1} and the complex $IC_{Y}(\mathcal{L})$ is computed with respect to the stratification $Y \supset R_{n-2} \supset R_{n-3} \supset \cdots$. Furthermore, the constructible sheaf $R^{0}j_{\ast}\mathcal{L}$ is uniquely determined (up to isomorphism) by the restriction map $f\vert_{X \setminus B}$.
	\end{theorem}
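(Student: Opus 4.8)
The plan is to run the argument of Example~\ref{ExampleMfd} almost verbatim, once the branch locus has been put into a workable local form; the only genuinely new ingredient is a local study of $f$ near the non-manifold strata of $R$, which takes the place of the explicit ``quotient by a cone'' computation in that example. I would proceed in three steps.

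\emph{Step 1 (stratifications).} Put $B_{k}:=f^{-1}(R_{k})$, so $B=B_{n-2}$. I would first check that
\[
Y\supseteq R_{n-2}\supseteq R_{n-3}\supseteq\cdots\quad\text{and}\quad X\supseteq B_{n-2}\supseteq B_{n-3}\supseteq\cdots
\]
are topological pseudomanifold stratifications of $Y$ and $X$, and that $f$ is a strata-preserving map which is moreover dimension-preserving: it carries each stratum of $X$ onto a stratum of $Y$ of equal dimension. The inputs are that $Y,X$ are manifolds and $R$ has codimension $2$ (so each $R_{k}$ is closed and $R_{k}\setminus R_{k-1}$ is a $k$-manifold); Proposition~\ref{ConstantIndex}, which says the fibre cardinality of $f$ is constant along every stratum of $R$ for any stratification, in particular the intrinsic one, so that $B_{k}\setminus B_{k-1}=f^{-1}(R_{k}\setminus R_{k-1})$ is a $k$-manifold on which $f$ restricts to a covering; and local flatness of $R\hookrightarrow Y$ and $B\hookrightarrow X$, which matches up distinguished neighbourhoods. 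In particular the stratum of $X$ through $x\in B$ and the stratum of $Y$ through $f(x)$ have the same codimension.

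\emph{Step 2 (local structure of $f$ near $R$).} Fix $y\in R$ in a $j$-dimensional stratum, with a distinguished neighbourhood $U\cong\mathbb{R}^{j}\times C(S^{n-j-1})$ and $U\cap R\cong\mathbb{R}^{j}\times C(L_{y})$, where $L_{y}\subset S^{n-j-1}$ has codimension $2$. Since $f\vert_{B}$ is a covering (Step 1) and $U\cap R$ is contractible, $f^{-1}(U\cap R)$ is a disjoint union of copies of $U\cap R$. Combining this with local flatness and Fox's completion theorem~\cite{fox1957covering} (the branched cover is the unique completion of its restriction to the regular set, and a completion over a cone $C(Z)$ branched along $C(W)$ is the cone on the completion over $Z$ branched along $W$), I would show that, along a cofinal family of such $U$, the preimage $f^{-1}(U)$ is a disjoint union of open subsets indexed by $f^{-1}(y)$, each homeomorphic to a cone on a branched cover of $S^{n-j-1}$ over $L_{y}$; in particular each is contractible, and their number is $|f^{-1}(y)|$, equal to the number of orbits of the local monodromy. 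This is exactly what Example~\ref{ExampleMfd} establishes when $L_{y}$ is a sphere.

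\emph{Step 3 (conclusion).} Since $X$ and $Y$ are manifolds, $\underline{\mathbb{Q}}_{X}[n]\cong IC_{X}(\underline{\mathbb{Q}}_{X\setminus B})$ and $\underline{\mathbb{Q}}_{Y}[n]\cong IC_{Y}(\underline{\mathbb{Q}}_{Y\setminus R})$ for the stratifications of Step 1 (the constant sheaf on a manifold is its own intersection complex, for any pseudomanifold stratification). By the commutative square, Proposition~\ref{PropDecomp1}, and $R^{\ell}(f\vert_{X\setminus B})_{\ast}\underline{\mathbb{Q}}_{X\setminus B}=0$ for $\ell>0$, the complex $Rf_{\ast}\underline{\mathbb{Q}}_{X}[n]=Rf_{\ast}IC_{X}(\underline{\mathbb{Q}}_{X\setminus B})$ restricts to $(\underline{\mathbb{Q}}_{Y\setminus R}\oplus\mathcal{L})[n]$ on $Y\setminus R$, and I would verify that it satisfies the characterization of $IC_{Y}(\underline{\mathbb{Q}}_{Y\setminus R}\oplus\mathcal{L})$. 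The support condition is immediate from Step 2, since $(Rf_{\ast}\underline{\mathbb{Q}}_{X}[n])_{y}\cong\bigoplus_{x\in f^{-1}(y)}(\underline{\mathbb{Q}}_{X}[n])_{x}$ is concentrated in degree $-n$ for every $y$. The cosupport condition follows from proper base change along the finite map $f$: for a codimension-$c$ stratum $S\subset Y$ with inclusion $\ell_{S}$ and $S':=f^{-1}(S)$ one has $\ell_{S}^{!}(Rf_{\ast}\underline{\mathbb{Q}}_{X}[n])\cong R(f\vert_{S'})_{\ast}(\ell_{S'}^{!}\underline{\mathbb{Q}}_{X}[n])$, and since $S'$ is a codimension-$c$ submanifold of the manifold $X$ and $f\vert_{S'}$ is a covering (Step 1), this is concentrated in degree $c-n$, which is within the prescribed bound. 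Hence
\[
Rf_{\ast}\underline{\mathbb{Q}}_{X}[n]\;\cong\;IC_{Y}(\underline{\mathbb{Q}}_{Y\setminus R}\oplus\mathcal{L})\;\cong\;\underline{\mathbb{Q}}_{Y}[n]\oplus IC_{Y}(\mathcal{L}),
\]
using additivity of the intersection complex in the local system. (An equivalent route applies the iterated Deligne truncation functor to $Rf_{\ast}Ri_{\ast}\underline{\mathbb{Q}}_{X\setminus B}[n]\simeq Rj_{\ast}\underline{\mathbb{Q}}_{Y\setminus R}[n]\oplus Rj_{\ast}\mathcal{L}[n]$, obtained from the commutative square together with Propositions~\ref{PropDecomp1} and \ref{fvert}, using that $f$ is dimension-preserving so that the truncations on $X$ and on $Y$ use the same perversities and commute with $Lf^{\ast}$.) The last assertion is then automatic: $\mathcal{L}$ is built in Equation~\ref{Decomp1} solely from the covering $f\vert_{X\setminus B}$, hence so is $R^{0}j_{\ast}\mathcal{L}=j_{\ast}\mathcal{L}$ (and thereby $IC_{Y}(\mathcal{L})$ for the intrinsic stratification).

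The step I expect to be the real obstacle is Step 2: showing that a locally flat finite branched covering is locally conical, so that the preimage of a distinguished neighbourhood breaks up into cones on branched covers of the link, with codimensions of strata preserved. This is precisely where local flatness---and the Fox-completeness of the minimal branch locus---is indispensable; it is what the wild branched covering of Section~2 fails, and it is the ingredient that upgrades the submanifold case of Example~\ref{ExampleMfd} to an arbitrary codimension-$2$ stratified $R$. Everything else is bookkeeping with truncations and the uniqueness of the intersection complex.
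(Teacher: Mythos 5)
Your proposal is correct in substance, and its Steps 1 and 2 track the paper's proof closely: the paper likewise shows that $X \supset B \supset B_{n-3} \supset \cdots$ is a stratification using Proposition \ref{ConstantIndex} and the openness of $f$, and it establishes the contractibility of the connected components $V$ of $f^{-1}(U)$ by the explicit quotient-and-cone computation $V \simeq \sfrac{V}{(V\cap f^{-1}(U\cap R))} \cong C\big(V\cap f^{-1}(\partial\overline{U}\setminus(\partial(\overline{U})\cap R))\big)$ rather than by appealing to Fox's completion theorem, though the geometric content is the same (and your appeal to \cite{fox1957covering} is no less rigorous than the paper's unproved homeomorphism $C(f\vert_{\partial})\cong f\vert_{\operatorname{quo.}}$). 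Where you genuinely diverge is Step 3: the paper does not verify the support and cosupport axioms; it applies the iterated Deligne operator $\tau_{\leq\overline{p}(n)-n}Rj_{n\ast}\cdots\tau_{\leq\overline{p}(2)-n}Rj_{2\ast}$ to $Rf\vert_{2\ast}\underline{\mathbb{Q}}_{X\setminus B}\simeq\underline{\mathbb{Q}}_{Y\setminus R}\oplus\mathcal{L}$ and commutes each truncation past the pushforward using precisely the contractibility statement of your Step 2 together with finiteness of $f$, concluding by induction and $IC_{X}(\underline{\mathbb{Q}}_{X\setminus B})\simeq\underline{\mathbb{Q}}_{X}$ --- so your parenthetical ``equivalent route'' is in fact the paper's actual proof. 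Your primary route buys something real: the support condition is immediate from proper base change along the finite proper map, and the cosupport condition follows from $\ell_{S}^{!}Rf_{\ast}\simeq R(f\vert_{S'})_{\ast}\ell_{S'}^{!}$ together with local flatness of the strata of $B$ in $X$ (needed so that $\ell_{S'}^{!}\underline{\mathbb{Q}}_{X}[n]$ is concentrated in degree $c-n$, up to an orientation twist), so the contractibility of preimages of distinguished neighbourhoods is not strictly needed there, whereas it is the engine of the paper's argument; conversely, the paper's construction-based route is the one that transfers directly to the singular-target case treated in the next subsection. The one point you should make explicit in your route is constructibility of $Rf_{\ast}\underline{\mathbb{Q}}_{X}$ with respect to the chosen stratification (local constancy of $R^{0}f_{\ast}\underline{\mathbb{Q}}_{X}$ on each stratum), which the axiomatic characterization of $IC_{Y}$ presupposes; this follows from Proposition \ref{ConstantIndex} and your Step 1.
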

	\begin{proof}
	We start by showing that the filtration $X \supset B \supset B_{n-3} \supset \cdots$ is a stratification of $X$, where $B_{i}:=f^{-1}(R_{i})$. Let $x \in B_{j} \setminus B_{j-1}$ such that $f(x)=y$. As in the proof of Proposition \ref{IsoIncl}, for the point $y \in R_{j} \setminus R_{j-1}$ we choose a small open neighborhood $U$ in $Y$ such that $U \cong \mathbb{R}^{n}$ and $U \cap R \cong \mathbb{R}^{j} \times C(L^{(n-2)-j-1})$, where the topologically stratified space $L$ is a link of the point $y$ in $R$. Furthermore, we choose $U$ sufficiently small such that the restriction map $f \vert_{f^{-1}(U \cap (R_{j} \setminus R_{j-1}))}$ is a trivial covering. Recall that Proposition \ref{ConstantIndex} implies that the restriction map $f \vert_{B_{j} \setminus B_{j-1}}:B_{j} \setminus B_{j-1} \longrightarrow R_{j} \setminus R_{j-1}$ is an unramified covering. It follows that the subspace $B_{j} \setminus B_{j-1} \subset X$ is a topological manifold. Consider the stratification of $Y=\bigsqcup_{j}(R_{j} \setminus R_{j-1})=\bigsqcup_{j}S_{j}$. Let $S_{j}$ and $S_{j^{\prime}}$ be strata of $Y$ such that $S_{j} \subset \overline{ S_{j^{\prime} } }$. Define $T_{j}:= f^{-1}(S_{j})$ and $T_{j^{\prime}}:=f^{-1}(S_{j^{\prime}})$ and suppose $T_{j} \cap \overline{ T_{j^{\prime}} } \neq \emptyset$. Then for any $x \in T_{j}$, we have $f(x) \in S_{j}$. Since $S_{j} \subset \overline{ S_{j^{\prime} } }$, every open neighborhood of $f(x)$ intersects $S_{j^{\prime}}$. Because $f$ is an open map, every open neighborhood of $x$ intersects $T_{j^{\prime}}$. Thus, $x \in \overline{T_{j^{\prime}}}$, so $T_{j} \subset \overline{T_{j^{\prime}}}$. Hence, the decomposition $X=\bigsqcup_{j} T_{j}$ is a stratification of $X$.\\
	In the next step, similar to Example \ref{ExampleMfd}, we want to show that for the branch point $y \in R_{j} \setminus R_{j-1}$ there exists an open neighborhood $W \subseteq U$ such that the preimage $f^{-1}(W)$ is contractible. We generalize the idea presented in the previously mentioned example. Let $V \subseteq f^{-1}(U)$ be a connected component. Consider the restricted branched covering $f \vert_{V}:V \longrightarrow U$ with branch locus $U \cap R$. Since we considered a connected component of $f^{-1}(U)$, it implies that for $U$ sufficiently small the map $f \vert_{V \cap f^{-1}(U \cap R)}$ is a homeomorphism. Note that $U \setminus (U \cap R) \cong \mathbb{R}^{n} \setminus (\mathbb{R}^{j} \times C(L)) \cong \mathbb{R}^{j} \times C(S^{n-j-1}) \setminus (\mathbb{R}^{j} \times C(L)) \cong \mathbb{R}^{j} \times (0,1] \times (S^{n-j-1} \setminus L)$, which implies that $\partial \overline{U} \setminus (\partial (\overline{U}) \cap R) \cong \{0\} \times D^{j} \times (S^{n-j-1} \setminus L )$. The restriction map $f \vert_{V \cap f^{-1}((\partial \overline{U} \setminus (\partial (\overline{U}) \cap R )))}:= f \vert_{\partial }$ is an unramified covering. Furthermore, the map $f \vert_{ \operatorname{quo.} }: \sfrac{V}{(V \cap f^{-1}(U \cap R))} \longrightarrow \sfrac{U}{(U \cap R)}$ is a branched covering with the branch locus being a single point, since the map $f \vert_{V \cap f^{-1}(U \cap R)}$ is a homeomorphism. As a result we have a homeomorphism of maps $C(f \vert_{\partial}) \cong f \vert_{\operatorname{quo.}}$. Note that $U \cap R \simeq \ast$ and $U \cap R \hookrightarrow U$ is a cofibration, hence the previous homeomorphism of maps implies $V \simeq \sfrac{V}{(V \cap f^{-1}(U \cap R))} \cong C\big(V \cap (\partial \overline{U} \setminus (\partial (\overline{U}) \cap R ))\big) \simeq \ast$.\\
	Finally, consider the commutative diagram	
    \begin{align*}		\begin{tikzcd}[ampersand replacement=\&]
    		X \setminus B \arrow[r,  hookrightarrow,"i_{2}"] \arrow[d, "f \vert_{2}"'] \& X \setminus B_{n-3}  \arrow[d,"f \vert_{3}"] \arrow[r, hookrightarrow,"i_{3}"] \& X \setminus B_{n-4}  \arrow[d,"f \vert_{4}"] \arrow[r, hookrightarrow,"i_{4}"]  \& \cdots \\
    		Y \setminus R \arrow[r, hookrightarrow,"j_{2}"]  \& Y \setminus R_{n-3} \arrow[r,hookrightarrow,"j_{3}"] \& Y \setminus R_{n-4} \arrow[r, hookrightarrow,"j_{4}"] \& \cdots  \\ 
    	\end{tikzcd}.
    \end{align*}	
   Since the covering map $f \vert_{2}$ is unramified, using the same notation as in Equality \ref{Decomp1}, we have $Rf \vert_{2 \ast} \underline{\mathbb{Q}}_{X \setminus B} \simeq \underline{\mathbb{Q}}_{Y \setminus R} \oplus \mathcal{L}$ in $D_{c}^{b}(Y \setminus R)$. Let $\overline{p}$ be either the lower or the upper middle perversity, and let $\tau_{\leq}$ be the truncation functor. Applying the operator $\tau_{\leq \overline{p}(n)-n} Rj_{n \ast} \cdots	\tau_{\leq \overline{p}(3)-n} Rj_{3 \ast} \tau_{\leq \overline{p}(2)-n} Rj_{2 \ast}$ to the previous quasi-isomorphism, we obtain
   \begin{align*}
   		&\tau_{\leq \overline{p}(n)-n} Rj_{n \ast} \cdots 	\tau_{\leq \overline{p}(3)-n} Rj_{3 \ast} \tau_{\leq \overline{p}(2)-n} Rj_{2 \ast}Rf \vert_{2 \ast} \underline{\mathbb{Q}}_{X \setminus B}[n] \\ &\simeq
   		\tau_{\leq \overline{p}(n)-n} Rj_{n \ast} \cdots 	\tau_{\leq \overline{p}(3)-n} Rj_{3 \ast} \tau_{\leq \overline{p}(2)-n} Rj_{2 \ast}(\underline{\mathbb{Q}}_{Y \setminus R}[n]  \oplus \mathcal{L}[n] ).
   \end{align*}
   The commutativity of the first left-hand square and the existence of sufficiently small contractible open neighborhoods with contractible preimages yield
   \begin{align*}
   \tau_{\leq \overline{p}(2)-n} Rj_{2 \ast}Rf \vert_{2 \ast} \underline{\mathbb{Q}}_{X \setminus B}[n]  \simeq \tau_{\leq \overline{p}(2)-n} Rf \vert_{3 \ast} Ri_{2 \ast} \underline{\mathbb{Q}}_{X \setminus B}[n] .
   \end{align*}
   
   Similar to Example \ref{ExampleMfd}, Consider the adjunction between the derived direct image $Rf\vert_{3\ast}$ and the derived inverse image $Lf\vert_{3}^{\ast}$, together with the fact that $Lf\vert_{3}^{\ast}$ commutes with truncation functors (since it is exact). Hence, for the truncation $\tau_{\leq -n}$, the counit of the adjunction gives a natural morphism
   
   \begin{align*}
   	\tau_{\leq \overline{p}(2) -n} Rf\vert_{3\ast} Ri_{2\ast} \underline{\mathbb{Q}}_{X \setminus B}[n] \longrightarrow Rf \vert_{3\ast} \tau_{\leq \overline{p}(2)-n} Ri_{2\ast} \underline{\mathbb{Q}}_{X \setminus B}[n]
   \end{align*}
   
    Note that, since $X$ is a manifold, we have $\tau_{\leq \overline{p}(2) -n} Ri_{2\ast} \underline{\mathbb{Q}}_{X \setminus B}[n] \simeq \underline{\mathbb{Q}}_{X \setminus B_{n-3}}[n]$. The existence of sufficiently small contractible open neighborhoods, as constructed above, such that the preimage under the map $f$ is contractible as well, implies that $R^{l}f_{\ast}(\tau_{\leq \overline{p}(2) -n} Ri_{2\ast} \underline{\mathbb{Q}}_{X \setminus B}[n])_{y}=0$ for $l > 0$ and $y \in Y$. For $l=0$, since $f$ is a finite map, the above natural morphism induces an isomorphism on the stalks. In fact, the rank of the stalk at each point equals the cardinality of its preimage under the map 
    $f$. It follows that the induced morphism in the derived category is a quasi-isomorphism. Thus, we have the quasi-isomorphism
   \begin{align*}
   \tau_{\leq \overline{p}(2)-n} Rf \vert_{3 \ast} Ri_{2 \ast} \underline{\mathbb{Q}}_{X \setminus B}[n] \simeq  Rf \vert_{3 \ast} \tau_{\leq \overline{p}(2)-n} Ri_{2 \ast} \underline{\mathbb{Q}}_{X \setminus B}[n].
   \end{align*}

    Let $\overline{p}$ be the upper or lower middle perversity, then the claim follows by induction and the quasi-isomorphism $IC_{X}(\underline{\mathbb{Q}}_{X \setminus B}) \simeq \underline{\mathbb{Q}}_{X}$.
   \end{proof}

\begin{corollary}
 In the setting of Theorem \ref{MainTheo1}, the codimension of the branch locus $R$ cannot be greater than $2$.
\end{corollary}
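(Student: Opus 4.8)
The corollary asserts that, for a branched covering $f\colon X\to Y$ between closed topological $n$-manifolds whose branch locus $R$ is a closed topologically stratified space with the inclusions of Definition \ref{BranchLocFlat} locally flat, one necessarily has $\operatorname{codim} R\le 2$; thus the hypothesis $\dim R=n-2$ of Theorem \ref{MainTheo1} is no real restriction. The plan is to argue by contradiction: assuming $c:=\operatorname{codim} R\ge 3$, I would exhibit a point of $R$ possessing an evenly covered neighborhood, contradicting the minimality of $R$.

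First I would pick $y$ in the top (open, dense) stratum $T$ of $R$, a topological manifold of dimension $n-c$. Since $y$ has empty link in $R$, local flatness together with the construction in the proof of Proposition \ref{IsoIncl} provides a distinguished neighborhood $U\cong\mathbb{R}^{n}$ of $y$ with $U\cap R\cong\mathbb{R}^{n-c}$ standardly embedded, so that
\begin{align*}
U\setminus(U\cap R)\;\cong\;\mathbb{R}^{n-c}\times\bigl(\mathbb{R}^{c}\setminus\{0\}\bigr)\;\simeq\;S^{\,c-1}.
\end{align*}
Because $c\ge 3$, the sphere $S^{\,c-1}$ is simply connected, so the monodromy representation $\rho_{y}$ governing $\mathcal{L}$ near $y$ (see the proof of Proposition \ref{ConstantIndex}) is trivial; hence the finite covering $f\vert_{X\setminus B}$ is trivial over $U\setminus(U\cap R)$, and $f^{-1}\bigl(U\setminus(U\cap R)\bigr)$ is a disjoint union of $k:=\deg(f\vert_{X\setminus B})$ sheets, each carried homeomorphically onto $U\setminus(U\cap R)$ by $f$. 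Moreover, since $U\cap R$ is a contractible open subset of the top stratum of $R$, Proposition \ref{ConstantIndex} and its proof show $f^{-1}(U\cap R)\to U\cap R$ is a trivial covering.

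Next I would prove that every connected component $V$ of $f^{-1}(U)$ is mapped homeomorphically onto $U$, so that $U$ is evenly covered. As $X$ is an $n$-manifold and $f$ is open, $V$ is a connected open $n$-submanifold; as $f$ is proper with finite fibers, $f\vert_{V}$ is proper, so its image, being open and closed in the connected set $U$, equals $U$. By hypothesis $B=f^{-1}(R)$ is locally flat in $X$, of codimension $c\ge 3$ (as in the proof of Proposition \ref{IsoIncl}); therefore $V\cap B$ is a locally flat subset of codimension $\ge 2$ of the connected manifold $V$, so $V\setminus(V\cap B)$ is connected. Being a nonempty, connected, open and closed subset of the trivial $k$-sheeted cover $f^{-1}\bigl(U\setminus(U\cap R)\bigr)$, it is a single sheet; hence $f\vert_{V\setminus(V\cap B)}$ is a homeomorphism onto $U\setminus(U\cap R)$ and $f\vert_{V}$ has exactly one preimage over each point of $U\setminus(U\cap R)$. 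Finally, the components of $V\cap B$ are disjoint closed copies of $U\cap R$, each mapped homeomorphically by $f$; using that distinct points of $f^{-1}(y)\cap V$ admit disjoint neighborhoods in $V$ that $f$ sends homeomorphically onto neighborhoods of $y$ (local flatness of $B$ again, together with the connectedness of $\mathbb{R}^{c}\setminus\{0\}$), and that a regular point in the intersection of two such neighborhoods would then have two preimages inside the single sheet $V\setminus(V\cap B)$, one sees $V\cap B$ is a single copy of $U\cap R$. Thus $f\vert_{V}\colon V\to U$ is a proper continuous bijection, hence a homeomorphism, and $U$ is an evenly covered neighborhood of $y$ --- contradicting that $y$ lies in the minimal branch locus. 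Therefore $c\le 2$; combined with the classical lower bound $\operatorname{codim} R\ge 2$ recalled above, $R$ has codimension exactly $2$.

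I expect the main obstacle to be the last step of the previous paragraph: promoting the triviality of the unbranched covering over $U\setminus(U\cap R)$ to a genuine even covering of $U$, i.e.\ ruling out a component of $f^{-1}(U)$ that meets $f^{-1}(U\cap R)$ in two or more sheets. The decisive inputs are the connectedness of $V\setminus(V\cap B)$ --- which rests on $B\hookrightarrow X$ being locally flat of codimension $\ge 2$ --- and the simple connectivity of the local link complement $S^{\,c-1}$ for $c\ge 3$, which forces the unbranched covering to be locally trivial near every branch point and hence $f$ to be locally injective there.
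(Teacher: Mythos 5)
Your proposal is correct, and its engine is the same as the paper's: in codimension $c\ge 3$ the local link complement near a branch point is simply connected, so the local monodromy of the unbranched covering is trivial. Where you diverge is in how the contradiction is extracted, and in which points of $R$ you examine. The paper takes an arbitrary $y\in R$ with link $L$, argues via Alexander duality and general position that $\pi_1(S^{n-j-1}\setminus L)=0$ once $\operatorname{codim}L\ge 3$, feeds this into the stalk formula of Equation \ref{rkequ} to get $\operatorname{rk}(f_{\ast}\underline{\mathbb{Q}}_{X})_{y}=d$, and then invokes (without further argument) that the fiber over a branch point must have cardinality strictly less than the degree $d$. You instead restrict to a point of the \emph{top} stratum of $R$, where the link is empty and the complement is literally $S^{c-1}$ --- this sidesteps the Alexander-duality step entirely and is legitimate, since a single point of $R$ with an evenly covered neighborhood already contradicts the minimality convention for the branch locus. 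You then do the point-set work the paper omits: showing that trivial monodromy over $U\setminus(U\cap R)$, together with local flatness of $B\hookrightarrow X$ in codimension $\ge 2$ (connectedness of $V\setminus(V\cap B)$) and properness/openness of $f$, forces each component of $f^{-1}(U)$ to map homeomorphically onto $U$. In effect you supply a proof of the implication ``full fiber cardinality $\Rightarrow$ evenly covered'' that the paper treats as self-evident, at the cost of a longer argument; the paper's version is shorter but leans on that unproved assertion and on the sharper (and here unnecessary) claim that \emph{every} point of a codimension-$\ge 3$ branch locus has full fiber.
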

\begin{proof}
		Assume that the branch locus of the map $f:X \longrightarrow Y$ is a topological stratified space $R \subset Y$ of codimension at least $3$. Let $y \in R$ be a point in the branch locus $R$ and $n=\dim_{\mathbb{R}}(Y)$. Choose a small open neighborhood $U \subseteq Y$ of $y$ such that $U \cong \mathbb{R}^{n}$ and $U \cap R \cong \mathbb{R}^{j} \times C(L)$, where $L$ is a link of the point $y$. As shown in the proof of Proposition \ref{IsoIncl}, we have $U \setminus (U \cap R) \simeq S^{n-j-1} \setminus L$. Using dimension theory and transversality arguments, we deduce that removing a closed pseudomanifold $L$ of codimension at least $3$ from the sphere $S^{n-j-1}$ does not change its fundamental group. Hence, $\pi_{1}(S^{n-j-1} \setminus L)=0$. From Equality \ref{rkequ}, it follows that $\operatorname{rk}(Rf_{\ast}(\underline{\mathbb{Q}}_{X})_{y})=d$, where $d$ is the degree of the map $f \vert_{X \setminus f^{-1}(R)}$. This contradicts the assumption that $y$ is a branch point, since at a branch point the rank must be strictly less than $d$.
\end{proof}

\subsection{Direct Image Sheaf under Branched Coverings over Singular Target Spaces}
In this subsection, we develop a decomposition theorem for branched coverings $f:X \longrightarrow Y$ requiring only that the covering space $X$ is a closed topological manifold. We begin with the following simple observation.\\
\begin{proposition}
	Let $f:X \longrightarrow Y$ be a branched covering with $X$ a  closed topological manifold, $Y$ a closed topological pseudomanifold, and the branch locus $R \subset Y$ a closed topologically stratified space of codimension 2. Then $Y_{\text{sing}}$, the singular part of $Y$, satisfies $Y_{\text{sing}} \subseteq R$.
\end{proposition}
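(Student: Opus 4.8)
The plan is to show that every point $y \in Y \setminus R$ is a manifold point of $Y$, i.e.\ admits an open neighborhood homeomorphic to a Euclidean space, and then to invoke the fact that the intrinsic top stratum $Y \setminus Y_{\text{sing}}$ is, by construction, the largest open subset of $Y$ which is a topological manifold (cf.\ the discussion following the perversity definition and \cite[V]{borel}). Once we know $Y \setminus R$ is an open submanifold of $Y$, it must be contained in the maximal one, giving $Y \setminus R \subseteq Y \setminus Y_{\text{sing}}$, equivalently $Y_{\text{sing}} \subseteq R$.

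First I would carry out the local step. Write $n := \dim X$ and $B := f^{-1}(R)$. By definition of a topological branched covering, $f\vert_{f^{-1}(Y \setminus R)} : f^{-1}(Y \setminus R) \longrightarrow Y \setminus R$ is a finite covering map, hence in particular a local homeomorphism. Fix $y \in Y \setminus R$ and choose any $x \in f^{-1}(y) \subseteq X \setminus B$. Since $X$ is a closed topological $n$-manifold and $X \setminus B = f^{-1}(Y \setminus R)$ is open in $X$, the point $x$ has an open neighborhood $V \subseteq X \setminus B$ with $V \cong \mathbb{R}^n$; after shrinking $V$ we may also assume that $f\vert_V : V \longrightarrow f(V)$ is a homeomorphism onto an open subset of $Y$. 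Then $f(V)$ is an open neighborhood of $y$ in $Y$ homeomorphic to $\mathbb{R}^n$, so $y$ is a manifold point of $Y$.

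It then remains to fit this into the intrinsic stratification. Since $R$ is a closed topologically stratified subspace of $Y$ of codimension $2$, it is nowhere dense, so $Y \setminus R$ is open and dense in $Y$; by the previous paragraph it is a topological $n$-manifold. As $Y$ is a pseudomanifold, its intrinsic top stratum $Y \setminus Y_{\text{sing}}$ is dense, hence meets the open set $Y \setminus R$, which forces $\dim Y = n$. Therefore $Y \setminus R$ is an open subset of $Y$ that is an $n$-manifold, and since $Y \setminus Y_{\text{sing}}$ is the largest such open subset, we conclude $Y \setminus R \subseteq Y \setminus Y_{\text{sing}}$, i.e.\ $Y_{\text{sing}} \subseteq R$. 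The argument is essentially a direct unwinding of definitions; the only point that needs a little care is the identification of $Y \setminus Y_{\text{sing}}$ with the maximal open submanifold of $Y$ together with the density argument needed to match dimensions, and neither of these is a genuine obstacle given the construction of the intrinsic stratification.
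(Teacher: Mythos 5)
Your proposal is correct and rests on exactly the same idea as the paper's proof: over $Y \setminus R$ the map $f$ is a local homeomorphism, so Euclidean charts on the closed manifold $X$ push down to give Euclidean neighborhoods of every point of $Y \setminus R$. The paper runs this as a contradiction at a hypothetical point of $Y_{\text{sing}} \setminus R$, whereas you argue directly and spell out the appeal to the maximality of the intrinsic top stratum (and the dimension count), but the substance is identical.
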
 
\begin{proof}
	Assume that $Y_{\text{sing}} \setminus R \neq \emptyset$. Recall that $(Y \setminus R) \subset Y$ is open. Choose points $y \in Y_{\text{sing}} \setminus R$ and $x \in X$ such that $f(x)=y$. Then, there exists an open neighborhood $U \cong \mathbb{R}^{\dim(X)}$ of $x$ such that $f \vert_{U}$ is a homeomorphism. It follows that $f(U) \cong \mathbb{R}^{\dim(Y)}$ is an open neighborhood of $y \in Y_{\text{sing}}$ which is a contradiction.
\end{proof}
   For a branched covering $f:X \longrightarrow Y$ with a singular target space, the statement of Proposition \ref{ConstantIndex} does not hold for the intrinsic stratification of $Y$ in general. In the following proposition, we aim to refine this stratification so that the cardinality of the fibers over branch points is constant along each connected component of the strata.
    \begin{proposition}\label{RefinedStrat}
    Let $f:X \longrightarrow Y$ be a branched covering of topological spaces such that $X$ is a closed topological manifold and $Y$ is a closed topological pseudomanifold. Furthermore, assume that the branching locus $R \subset Y$ is a closed stratified space of codimension 2 with the intrinsic stratification $R= \bigsqcup_{\beta}T_{\beta}$. Then, for the intrinsic stratification of $Y$, namely $Y= \bigsqcup_{\alpha} S_{\alpha}$, the decomposition 
    \begin{align}\label{refined}
    Y=\big(S_{\alpha_{\text{top}}}\setminus (R \cap S_{\alpha_{\text{top}}}) \big) \bigsqcup_{\alpha, \beta } (S_{\alpha} \cap T_{\beta})
    \end{align}
    is a refined stratification of $Y$, where $S_{\alpha_{\textbf{top}}}$ is the top stratum of $Y$.
    \end{proposition}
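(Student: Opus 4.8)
The plan is to verify directly that the decomposition~\eqref{refined} satisfies Definition~\ref{pseudomanifolds} and refines the intrinsic stratification of $Y$, by (i) recording the set-theoretic structure, (ii) exhibiting a filtration by closed subsets whose open strata are manifolds of the expected dimensions, and (iii) establishing the local cone condition by induction on $\dim Y$. For (i): since $R$ is the branch locus it is nowhere dense, so $S_{\alpha_{\mathrm{top}}}\setminus(R\cap S_{\alpha_{\mathrm{top}}})$ is open and dense in $Y$; the preceding proposition gives $Y_{\mathrm{sing}}\subseteq R$, hence $Y\setminus R\subseteq S_{\alpha_{\mathrm{top}}}$ and the first piece of~\eqref{refined} equals $Y\setminus R$. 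From $Y=\bigsqcup_{\alpha}S_{\alpha}$ and $R=\bigsqcup_{\beta}T_{\beta}$ one obtains $R=\bigsqcup_{\alpha,\beta}(S_{\alpha}\cap T_{\beta})$, so the pieces of~\eqref{refined} are pairwise disjoint, cover $Y$, and each lies in a single $S_{\alpha}$; thus~\eqref{refined} is the common refinement of the intrinsic stratification of $Y$ with the decomposition of $Y$ into $Y\setminus R$ and the intrinsic strata of $R$, and it is set-theoretically finer than the intrinsic stratification.

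For (ii): let $Y=Y_{n}\supseteq Y_{n-1}=Y_{n-2}\supseteq\cdots$ and $R=R_{n-2}\supseteq R_{n-3}\supseteq\cdots$ be the intrinsic filtrations (so $Y_{n-2}=Y_{\mathrm{sing}}$ and $\dim R\le n-2$). Put $Y'_{n}:=Y$ and $Y'_{k}:=R_{k}\cup(R\cap Y_{k})$ for $k<n$, with the convention $R_{k}=R$ for $k\ge\dim R$; then $Y'_{n-1}=Y'_{n-2}=R$ (using $Y_{n-1}=Y_{n-2}=Y_{\mathrm{sing}}\subseteq R$), and each $Y'_{k}$ is closed in $Y$ because $Y_{k}$ and $R$ are closed in $Y$ and $R_{k}$ is closed in $R$. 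Using local flatness of $R\hookrightarrow Y$, each nonempty $S_{\alpha}\cap T_{\beta}$ is a topological manifold of dimension $m_{\alpha\beta}:=\min(\dim S_{\alpha},\dim T_{\beta})$: Definition~\ref{localflatness} provides, near any of its points, a chart in which the larger of $S_{\alpha},T_{\beta}$ contains the smaller as a standardly embedded Euclidean subspace, so locally $S_{\alpha}\cap T_{\beta}$ coincides with the stratum of smaller dimension. A point of $S_{\alpha}\cap T_{\beta}$ lies in $Y'_{k}$ precisely when $m_{\alpha\beta}\le k$, so for $k<n$ the set $Y'_{k}\setminus Y'_{k-1}$ is the disjoint union of those $S_{\alpha}\cap T_{\beta}$ with $m_{\alpha\beta}=k$, a $k$-manifold, while $Y'_{n}\setminus Y'_{n-1}=Y\setminus R$ is an open $n$-submanifold of $Y$; the connected components of $Y'_{k}\setminus Y'_{k-1}$ are precisely the strata in~\eqref{refined}. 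Since $Y_{k}\subseteq Y_{n-2}=Y_{\mathrm{sing}}\subseteq R$ for $k\le n-2$, we have $Y_{k}\subseteq Y'_{k}$ for all $k$, so $\{Y'_{k}\}$ refines $\{Y_{k}\}$.

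For (iii) I would argue by induction on $n=\dim Y$, proving the statement in the more general form that it holds for every closed, codimension-$2$, locally flat stratified subspace $R$ of a topological pseudomanifold $Y$ (the proposition is then the case singled out by $Y_{\mathrm{sing}}\subseteq R$, which makes $Y\setminus R$ the top stratum). Fix $x\in S_{\alpha}\cap T_{\beta}$, put $a=\dim S_{\alpha}$, $b=\dim T_{\beta}$, $d=m_{\alpha\beta}$, and choose a distinguished neighborhood $N\cong\mathbb{R}^{a}\times C(\mathcal{L})$ of $x$ in $Y$ for the intrinsic stratification of $Y$ and $N_{R}\cong\mathbb{R}^{b}\times C(\mathcal{M})$ of $x$ in $R$ for the intrinsic stratification of $R$; by local flatness of $R\hookrightarrow Y$ these may be chosen nested, $N_{R}\subseteq N$, with the strata through $x$ carried to standardly embedded Euclidean subspaces. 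Using $\mathbb{R}^{k}\cong C(S^{k-1})$ and $C(A)\times C(B)\cong C(A\ast B)$ for compact $A,B$, I would rewrite $N$, after shrinking, as $\mathbb{R}^{d}\times C(K)$ with $K$ a compact pseudomanifold of dimension $n-d-1$ (a link of a pseudomanifold is a pseudomanifold) so that $R$ corresponds to $\mathbb{R}^{d}\times C(\mathcal{N})$ for a closed codimension-$2$ stratified subspace $\mathcal{N}\subseteq K$ whose inclusion is again locally flat; concretely $K=S^{a-b-1}\ast\mathcal{L}$ and $\mathcal{N}=\mathcal{M}$ when $a\ge b$, while $K=\mathcal{L}$ and $\mathcal{N}=S^{b-a-1}\ast\mathcal{M}$ when $a\le b$. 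Since $\dim K<n$, the induction hypothesis applied to $(K,\mathcal{N})$ shows that $K$, with the induced refined filtration, is a topological stratification, and $K$ is compact; as the product form of the chart makes the compatibility of $\phi$ with the filtration in Definition~\ref{pseudomanifolds} manifest, this provides the distinguished-neighborhood datum required at $x$. For $x\in Y\setminus R$ the cone condition is immediate. Hence~\eqref{refined} is a topological stratification, and with (i) and (ii) it refines the intrinsic stratification.

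I expect the main obstacle to be the nesting step in (iii): extracting from the stratum-by-stratum statement of Definition~\ref{localflatness} the fact that, near $x$, the intrinsic stratification of $Y$ restricts along $R$ to a stabilization, by a Euclidean factor, of the intrinsic stratification of $R$, so that the two families of distinguished neighborhoods can be chosen nested and simultaneously in product form; propagating this through the cone coordinate is what makes the induction close. The cases $a\le b$ and $a\ge b$ require separate bookkeeping, and the base of the induction --- where $R$ is a manifold, so $\mathcal{M}=\emptyset$ and the refined $K$ carries the trivial one-stratum stratification --- should be verified directly, along the lines of the neighborhood computations in Example~\ref{ExampleMfd}.
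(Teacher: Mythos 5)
Your proposal is correct and, at its core, follows the same route as the paper: both arguments hinge on Definition \ref{localflatness} producing nested distinguished neighborhoods $U\subset R$ and $U^{\prime}\subset Y$ of a point of $S_{\alpha}\cap T_{\beta}$ in simultaneous product--cone form, on the join decomposition $\mathbb{R}^{l}\times C(L)\cong\mathbb{R}^{l^{\prime}}\times C(S^{\,l-l^{\prime}-1}\ast L)$ with the two cases $\dim S_{\alpha}\lessgtr\dim T_{\beta}$ treated separately, and on the observation that $U^{\prime}\cap(S_{\alpha}\cap T_{\beta})\cong\mathbb{R}^{\min(\dim S_{\alpha},\dim T_{\beta})}$, so that each new piece is a manifold of the expected dimension. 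The differences are ones of emphasis rather than substance. You go further than the paper in part (iii): the paper stops at showing the pieces are manifolds and declares the claim proved, whereas you actually verify the local cone condition of Definition \ref{pseudomanifolds} for the refined filtration by inducting on $\dim Y$ and applying the hypothesis to the pair $(K,\mathcal{N})$ of links; this fills a step the paper leaves implicit, and your identification of the nesting/compatibility of the two families of distinguished neighborhoods as the delicate point is accurate --- it is exactly what the paper's commutative diagrams assert without proof. Conversely, the paper folds into this proof the computation $U^{\prime}\setminus(U^{\prime}\cap R)\simeq L^{\prime}\setminus(S^{\,l-l^{\prime}-1}\ast L)$ and the consequent constancy of the fiber cardinality along $S_{\alpha}\cap T_{\beta}$ (via the argument of Proposition \ref{ConstantIndex}), so that $f$ restricted over each new stratum is an unramified covering and $f^{-1}(S_{\alpha}\cap T_{\beta})$ is also a manifold; this is not needed for the literal statement you were asked to prove, but it is what the subsequent corollary and the stratification of $X$ rely on, so if you intend your proof to slot into the paper you should record that computation as well.
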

    \begin{proof}
    	Let $Y_{\text{sing}} \subset Y$ be the singular locus of $Y$ and $n= \dim(Y)$. From the previous proposition, we know that $Y_{\text{sing}} \subseteq R$. The idea of the proof is to show the intersection $S_{\alpha} \cap T_{\beta}$ is a manifold for all $\alpha$ and $\beta$. Choose a distinguished open neighborhood $U$ of the point $y \in S_{\alpha} \cap T_{\beta}$ in $R$ such that $U \cong \mathbb{R}^{l} \times C(L)$, where $l = \dim(T_{\beta})$ and $L$ is a link of the point $y$. Let $i:R \hookrightarrow Y$ be the inclusion. Let $U^{\prime} \subset Y$ be a distinguished open neighborhood of the point $y$ such that $U^{\prime} \cong \mathbb{R}^{l^{\prime} } \times C(L^{\prime})$, where $l^{\prime}= \dim(S_{\alpha})$ and $L^{\prime}$ is a link of the point $y$ in $Y$.

    	Let $A \ast B$ denote the join of topological spaces $A$ and $B$. Assume $l^{\prime} \leq l$. Note that $U \cong \mathbb{R}^{l} \times C(L) \cong \mathbb{R}^{l^{\prime}} \times C(S^{l-l^{\prime}-1 }) \times C(L) \cong \mathbb{R}^{l^{\prime}} \times C(S^{l-l^{\prime}-1 } \ast L)$, with the conventions that $S^{-1}= \emptyset$ and $\emptyset \ast A=A$, for any topological space $A$. 
    	Note that in Definition \ref{localflatness}, we assumed the existence of the following commutative diagram
    	\begin{align*}		\begin{tikzcd}[ampersand replacement=\&]
    			U^{\prime} \cap S_{\alpha} \arrow[r,  hookrightarrow] \arrow[d] \& U \cap T_{\beta} \arrow[d]   \\
    			\mathbb{R}^{l^{\prime}} \arrow[r, hookrightarrow]  \& \mathbb{R}^{l}  
    		\end{tikzcd}.
    	\end{align*}
    	Such that the inclusion $\mathbb{R}^{l^{\prime}} \hookrightarrow \mathbb{R}^{l}$ is the standard inclusion. Thus, we have the following commutative diagram
    		 \begin{align*}	
    	 \begin{tikzcd}[ampersand replacement=\&]
    	 	U^{\prime} \cap S_{\alpha}(\cong \mathbb{R}^{l^{\prime}})  \arrow[r,  hookrightarrow,] \arrow[d,"\cong"'] \& U (\cong \mathbb{R}^{l^{\prime}} \times C( S^{l-l^{\prime}-1} \ast L)) \arrow[d,hookrightarrow,"i \vert"']   \\
    	 U^{\prime} \cap (S_{\alpha} \cap T_{\beta})  (\cong \mathbb{R}^{l^{\prime}}) \arrow[r, hookrightarrow]  \& U^{\prime}(\cong \mathbb{R}^{l^{\prime}} \times C( L^{\prime}))  \\ 
    	 \end{tikzcd}.
    	 \end{align*}	
    	 Hence, for a point $y \in S_{\alpha} \cap T_{\beta} \subset R$, the open neighborhood $U^{\prime}$ provides a distinguished open neighborhood of $i(y) \in Y$. From the above commutative diagram, it follows that $U^{\prime} \setminus U \cong U^{\prime} \setminus (U^{\prime} \cap R) \cong (\mathbb{R}^{l^{\prime}} \times C(L^{\prime}) \setminus (\mathbb{R}^{l^{\prime}} \times C(S^{l-l^{\prime} -1} \ast L))) \cong \mathbb{R}^{l^{\prime}} \times (L^{\prime} \setminus (S^{l-l^{\prime} -1} \ast L)) \times (0,1] \simeq L^{\prime} \setminus( S^{l-l^{\prime} -1} \ast L)$. Hence, for each $y \in S_{\alpha} \cap T_{\beta}$ the cardinality of the fiber, which is equal to $\operatorname{rk}(f_{\ast}(\underline{\mathbb{Q}}_{X})_{y})$, is constant, as shown in the proof of Proposition \ref{ConstantIndex}. It follows that the map $f \vert_{S_{\alpha} \cap T_{\beta}}$ is an unramified covering and, consequently, $S_{\alpha} \cap T_{\beta}$ and $f^{-1}(S_{\alpha} \cap T_{\beta})$ are topological manifolds.

    	 Assuming $l \leq l^{\prime}$ and using the above notation, we arrive at the following commutative diagram
    	 	 \begin{align*}	
    	 	\begin{tikzcd}[ampersand replacement=\&]
    	 		U \cap T_{\beta}(\cong \mathbb{R}^{l})  \arrow[r,  hookrightarrow,] \arrow[d,"\cong"'] \& U (\cong \mathbb{R}^{l} \times C( L)) \arrow[d,hookrightarrow,"i \vert"']   \\
    	 		U^{\prime} \cap (S_{\alpha} \cap T_{\beta})  (\cong \mathbb{R}^{l}) \arrow[r, hookrightarrow]  \& U^{\prime}(\cong \mathbb{R}^{l} \times C( S^{l^{\prime} -l-1} \ast L^{\prime}))  \\ 
    	 	\end{tikzcd}.
    	 \end{align*}
    	 With the same argument as before, it follows that $S_{\alpha} \cap T_{\beta}$ and $f^{-1}(S_{\alpha} \cap T_{\beta})$ are manifolds. Hence, the claim follows.
         \end{proof}
        Considering the proof of the previous proposition, it follows that the rank of the direct image sheaf $(R^{0}f_{\ast} \underline{\mathbb{Q}}_{X})_{y}$ at each point $y \in R$ is constant along each connected component of the strata in the refined stratification; and consequently, the cardinality of the fiber is likewise constant.

    \begin{corollary}
    In the setting of Proposition~\ref{RefinedStrat}, the cardinality of fibers over branch points is constant on each connected component of the refined stratification.
    \end{corollary}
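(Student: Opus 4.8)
The plan is to derive this corollary directly from the proof of Proposition~\ref{RefinedStrat}, supplemented by the monodromy bookkeeping of Proposition~\ref{ConstantIndex}; no essentially new idea is needed. I would first split the refined decomposition of Proposition~\ref{RefinedStrat} into its two kinds of pieces: the open top piece $S_{\alpha_{\text{top}}} \setminus (R \cap S_{\alpha_{\text{top}}})$ and the intersections $S_{\alpha} \cap T_{\beta}$. On the first piece the claim is immediate, since this set is contained in the regular locus $Y \setminus R$, where $f$ restricts to a finite covering map; hence $y \mapsto \# f^{-1}(y) = \operatorname{rk}(f_{\ast}\underline{\mathbb{Q}}_{X})_{y}$ is locally constant and therefore constant on each connected component.

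For a piece $S_{\alpha} \cap T_{\beta}$ I would recall from the proof of Proposition~\ref{RefinedStrat} that local flatness furnishes, for every $y \in S_{\alpha} \cap T_{\beta}$, a distinguished neighbourhood $U^{\prime} \cong \mathbb{R}^{l^{\prime}} \times C(L^{\prime})$ of $y$ in $Y$ for which $U^{\prime} \setminus (U^{\prime} \cap R)$ is homotopy equivalent to a link complement depending only on the stratum (namely $L^{\prime} \setminus (S^{\,l - l^{\prime} - 1} \ast L)$ when $l^{\prime} \leq l$, and the analogous expression when $l \leq l^{\prime}$), and that $f$ restricted to $S_{\alpha} \cap T_{\beta}$ is an unramified covering. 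As in the proof of Proposition~\ref{ConstantIndex}, the equality $\operatorname{rk}(f_{\ast}\underline{\mathbb{Q}}_{X})_{y} = \# f^{-1}(y)$ together with Equality~\ref{rkequ} reduces the assertion to showing that
\[
1 + \operatorname{rk}\{\, v \in \mathbb{Q}^{k-1} \mid \rho_{y}(\gamma) v = v \ \text{for all}\ \gamma \in \pi_{1}(U^{\prime} \setminus (U^{\prime} \cap R)) \,\}
\]
is constant along each connected component of $S_{\alpha} \cap T_{\beta}$, where $\rho_{y}$ denotes the monodromy of $\mathcal{L}$ on the link complement at $y$.

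Finally, I would repeat the conjugation argument of Proposition~\ref{ConstantIndex}: for $y, y^{\prime}$ in one connected component of $S_{\alpha} \cap T_{\beta}$, choose a path joining them inside that component, cover its image by finitely many distinguished neighbourhoods with the above product structure, and observe that on overlaps the two local trivializations of $\mathcal{L}$ differ by the transition functions of $\mathcal{L}$, which are isomorphisms of stalks. These identify the fundamental groups of the link complements and conjugate $\rho_{y}$ to $\rho_{y^{\prime}}$, so the dimension of the invariant subspace---and hence the fibre cardinality---coincides at $y$ and $y^{\prime}$. The step that merits the most care, rather than a genuine obstacle, is that the pieces $S_{\alpha} \cap T_{\beta}$ need not be connected, which is why the statement is (correctly) phrased per connected component; everything else is already contained in the proofs of Propositions~\ref{ConstantIndex} and~\ref{RefinedStrat}, so the proof is a short assembly of those ingredients.
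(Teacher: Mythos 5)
Your proposal is correct and takes essentially the same route as the paper: the paper's own proof is a two-line pointer that combines the identification $U^{\prime} \setminus (U^{\prime} \cap R) \simeq L^{\prime} \setminus (S^{l-l^{\prime}-1} \ast L)$ from Proposition~\ref{RefinedStrat} with the conjugation-of-monodromy argument from Proposition~\ref{ConstantIndex}, which is precisely what you assemble (you merely add the trivial observation about the top stratum, which the paper omits).
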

    \begin{proof}
    Using the same notation as in the proof of Proposition \ref{RefinedStrat}, for a given point $y \in S_{\alpha} \cap T_{\beta}$ and $U^{\prime} \subseteq Y$ a sufficiently small neighborhood of the point $y$, we obtain $U^{\prime} \setminus (U^{\prime} \cap R) \simeq L^{\prime} \setminus (S^{l-l^{\prime} -1} \ast L)$. The claim follows by using the same arguments as in the proof of Proposition \ref{ConstantIndex}. 
    \end{proof}
    Consider the refined stratification $Y=Y_{n} \supset Y_{n-2}(= R) \supset Y_{n-3}  \supset \cdots$ obtained in Proposition \ref{RefinedStrat}. The goal of the following proposition is to show that the pull-back filtration $X \supset f^{-1}(Y_{n-2}) \supset f^{-1}(Y_{n-3}) \supset \cdots$ is in fact a stratification of the closed manifold $X$.
    \begin{proposition}
    	In the setting of Proposition \ref{RefinedStrat}, the decomposition 
    	\begin{align}\label{refinedX}
    		X=f^{-1}\big( S_{\alpha_{\textbf{top}}}\setminus (R \cap S_{\alpha_{ \text{top}  } } ) \big) \bigsqcup_{\alpha, \beta } f^{-1}(S_{\alpha} \cap T_{\beta})
    	\end{align}
    	is a stratification of the closed manifold $X$.
    \end{proposition}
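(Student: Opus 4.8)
The plan is to verify directly that the filtration $X=X_{n}\supseteq X_{n-2}\supseteq X_{n-3}\supseteq\cdots$, with $X_{i}:=f^{-1}(Y_{i})$, satisfies the conditions of Definition \ref{pseudomanifolds}. Three of these are immediate: each $X_{i}$ is closed because $Y_{i}$ is closed and $f$ is continuous; the set $X_{j}\setminus X_{j-1}=f^{-1}(Y_{j}\setminus Y_{j-1})$ is the preimage of a union of refined strata of dimension $j$, over each of which $f$ restricts to an unramified covering by Proposition \ref{RefinedStrat} and its corollary, so $X_{j}\setminus X_{j-1}$ is a $j$-dimensional topological manifold (and $X\setminus B$, being open in the closed $n$-manifold $X$, is an $n$-manifold); and the incidence relation ``$f^{-1}(S)\subseteq\overline{f^{-1}(S')}$ whenever the refined stratum $S$ lies in $\overline{S'}$'' follows from the openness and surjectivity of $f$, exactly as in the proof of Theorem \ref{MainTheo1}. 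Hence the whole content is the existence of distinguished neighborhoods, and I would prove this by induction on $n=\dim Y=\dim X$, the base case in which $Y$ is a closed manifold being precisely the stratification statement established at the start of the proof of Theorem \ref{MainTheo1}.

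For the inductive step, take $x\in X_{j}\setminus X_{j-1}$ and put $y:=f(x)$. If $y\notin R$ then, since $Y_{\text{sing}}\subseteq R$, the point $y$ lies in the top stratum of $Y$ and $f$ is a local homeomorphism near $x$, so $x$ has a Euclidean neighborhood inside the top stratum of $X$ and there is nothing to check. Assume $y\in R$, in a refined stratum $S_{\alpha}\cap T_{\beta}$ of dimension $j$, and fix a distinguished neighborhood $U'\cong\mathbb{R}^{j}\times C(L')$ of $y$ for the refined stratification of $Y$, with $U'\cap Y_{j+i+1}\cong\mathbb{R}^{j}\times C(L'_{i})$ and $U'\cap Y_{j}\cong\mathbb{R}^{j}\times\{\text{vertex}\}$; here $L'$ is the link of $y$ in $Y$, a compact pseudomanifold of dimension $n-j-1$, and its sub-link $L'_{R}$ with $U'\cap R\cong\mathbb{R}^{j}\times C(L'_{R})$ is a closed topologically stratified subspace of codimension $2$. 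The branch locus of $f\vert_{f^{-1}(U')}$ equals $U'\cap R$ by minimality of $R$, and after shrinking $U'$ I may assume, as in Example \ref{ExampleMfd}, that the connected component $V$ of $f^{-1}(U')$ containing $x$ meets $f^{-1}(Y_{j})$ in a single sheet, so that $f\vert_{V\cap f^{-1}(Y_{j})}$ is a homeomorphism onto $U'\cap Y_{j}\cong\mathbb{R}^{j}$; since $U'$ is connected and $f$ is open and proper, $f\vert_{V}\colon V\to U'$ is a branched covering.

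The goal is then to upgrade the homotopy statement $V\simeq\ast$ of Theorem \ref{MainTheo1} to a conical homeomorphism $V\cong\mathbb{R}^{j}\times C(L'')$, for a compact topologically stratified space $L''$, taking $V\cap X_{j+i+1}$ onto $\mathbb{R}^{j}\times C(L''_{i})$ and $V\cap X_{j}$ onto $\mathbb{R}^{j}\times\{\text{vertex}\}$. Over the deleted conical neighborhood $U'\setminus R\cong\mathbb{R}^{j}\times(0,1)\times(L'\setminus L'_{R})$ the unramified part of $f\vert_{V}$ is a covering; its total space is connected because $V$ is an $n$-manifold and $B$ has codimension $2$ in $X$, and since the factor $\mathbb{R}^{j}\times(0,1)$ is contractible the covering is the pullback, along the projection to $L'\setminus L'_{R}$, of a connected covering $E\to L'\setminus L'_{R}$. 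Using that (by Fox, \cite{fox1957covering}) $f\vert_{V}$ is the unique branched extension of this unramified covering along $R\cap U'$, together with the compatibility of Fox's extension with products by branch-locus-free contractible factors and with passage to the open cone, one obtains $V\cong\mathbb{R}^{j}\times C(\widehat{E})$ over $U'\cong\mathbb{R}^{j}\times C(L')$, where $\widehat{E}\to L'$ is the branched extension of $E\to L'\setminus L'_{R}$ along $L'_{R}$. Set $L'':=\widehat{E}$; this is a compact space carrying a branched covering $L''\to L'$ over a closed pseudomanifold of dimension $<n$, locally flat, with branch locus contained in the codimension-$2$ closed topologically stratified space $L'_{R}$. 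Applying the inductive hypothesis to $L''\to L'$ shows that the pullback filtration makes $L''$ a topologically stratified space, and by construction this filtration corresponds under the homeomorphism above to the filtration of $V$ by the sets $V\cap X_{i}$. Thus $V$ is the required distinguished neighborhood of $x$, and the induction closes.

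The step I expect to be the main obstacle is precisely this upgrade from the homotopy equivalence of Theorem \ref{MainTheo1} to the conical homeomorphism compatible with every stratum. Carrying it out forces one to establish the functoriality properties of Fox's branched-covering completion invoked above --- stability under products with contractible branch-locus-free factors, under coning, and the fact that a connected covering of a punctured open cone completes to the open cone on the completed link --- and, interlocked with this, to set up the induction at the right level of generality: the link covering $L''\to L'$ must again be of the type to which the hypothesis applies, which is delicate since $L''$ is a priori only an $(n-j-1)$-manifold away from a closed codimension-$2$ subset, not a closed manifold, so the inductive claim should be phrased for branched coverings whose total space is merely a pseudomanifold of that kind. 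The residual bookkeeping --- that $L'_{R}\hookrightarrow L'$ and its preimage in $L''$ inherit local flatness, and that it is harmless if the branch locus of $L''\to L'$ is a proper closed subset of $L'_{R}$ --- is routine.
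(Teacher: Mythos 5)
Your first paragraph already contains the entirety of the paper's own proof: the paper checks that each piece $f^{-1}(S_{\alpha}\cap T_{\beta})$ and $f^{-1}(S_{\alpha_{\text{top}}}\setminus R)$ is a topological manifold (because the restrictions of $f$ to these sets are unramified coverings, by Proposition \ref{RefinedStrat} and its corollary), verifies the frontier condition $f^{-1}(S)\subseteq\overline{f^{-1}(T)}$ using the openness of $f$ exactly as you do, and then stops. The paper does not verify the existence of distinguished conical neighborhoods $\mathbb{R}^{j}\times C(\mathcal{L})$ required by Definition \ref{pseudomanifolds} for the pulled-back filtration of $X$. So your diagnosis --- that the real content of the statement is the local cone structure upstairs, not the manifoldness of the pieces or the incidence relations --- is correct, and on this point your proposal is more ambitious than the source.

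That said, the part you add is not a complete argument, as you yourself flag. The inductive construction of $V\cong\mathbb{R}^{j}\times C(L'')$ rests on three unproved assertions about Fox's completion \cite{fox1957covering}: compatibility with products by contractible branch-locus-free factors, compatibility with passage to open cones, and the claim that a connected covering of a punctured open cone completes to the open cone on the completed link. These are precisely where the work lies; invoking them does not discharge them. In addition, the induction hypothesis must be restated for branched coverings whose total space is merely a pseudomanifold of the relevant kind (since $L''$ is not a closed manifold), which means the supporting results you lean on --- Proposition \ref{RefinedStrat} and the constancy of fiber cardinality --- would themselves have to be re-established at that level of generality before the induction closes. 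In short: your proposal reproduces the paper's proof, correctly identifies a condition the paper leaves unaddressed, but replaces that missing verification with a plausible strategy rather than a proof.
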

\begin{proof}
	As shown in the proof of Proposition \ref{RefinedStrat}, since the cardinality of the fibers over branch points is constant on each connected component of strata, the restricted maps $f\vert_{f^{-1}(S_{\alpha} \cap T_{\beta}) }$ and $f \vert_{ f^{-1}(S_{\alpha_{\operatorname{top}} } \setminus R)}$ are unramified coverings. As a result, the subspaces $f^{-1}(S_{\alpha} \cap T_{\beta})$ and $f^{-1}(S_{\alpha_{\text{top}}}  \setminus R)$ are submanifolds of $X$. Consider the stratification of $Y$ given by Filtration \ref{refined}. Let $S$ and $T$ be strata of $Y$ such that $S \subset \overline{T}$. Define $A:= f^{-1}(S)$ and $B:=f^{-1}(T)$, and suppose $A \cap \overline{B} \neq \emptyset$. Then for any $x \in A$, we have $f(x) \in S$. Since $S \subset \overline{T}$, every open neighborhood of $f(x)$ intersects $T$. Because $f$ is an open map, every open neighborhood of $x$ intersects $B$. Thus, $x \in \overline{B}$, so $A \subset \overline{B}$. The claim follows as a result.
\end{proof}
\begin{proposition}\label{SingDecompProp1}
	Let $f:X \longrightarrow Y$ be a branched covering with $X$ a  closed topological manifold of dimension $n$, $Y$ a closed topological pseudomanifold, and the branch locus $R \subset Y$ a closed topological stratified space of codimension 2. Consider the refined stratification $Y=Y_{n} \supset Y_{n-2}(=R) \supset Y_{n-3} \supset Y_{n-4} \supset \dots$ obtained in Proposition \ref{RefinedStrat} and the inclusions $j_{k}: U_{k} \hookrightarrow U_{k+1}$ for $U_{k}=Y \setminus Y_{n-k}$. Furthermore, let $X_{i}:=f^{-1}(Y_{i})$, and consider the inclusions $i_{k}:V_{k} \hookrightarrow V_{k+1}$ for $V_{k}=X \setminus X_{n-k}$. Then in the derived category $D^{b}_{c}(Y)$, the following isomorphism holds:
	\begin{align}\label{SingDecomp1}
		&	Rf_{\ast}\tau_{\leq \overline{p}(n)-n} Ri_{n \ast} \cdots 	\tau_{\leq \overline{p}(3)-n} Ri_{3 \ast} \tau_{\leq \overline{p}(2)-n} Ri_{2 \ast} 
        \underline{\mathbb{Q}}_{X \setminus f^{-1}(R)}[n] \simeq  \\ & \tau_{\leq\overline{p}(n)-n} Rj_{n \ast} \cdots 	\tau_{\leq \overline{p}(3)-n} Rj_{3 \ast} \tau_{\leq \overline{p}(2)-n} Rj_{2 \ast} \underline{\mathbb{Q}}_{Y \setminus R}[n] \nonumber \\ \oplus & \tau_{\leq\overline{p}(n)-n} Rj_{n \ast} \cdots 	\tau_{\leq \overline{p}(3)-n} Rj_{3 \ast} \tau_{\leq \overline{p}(2)-n} Rj_{2 \ast}\mathcal{L}_{Y \setminus R}[n] \nonumber,
	\end{align}
	where $\mathcal{L}$ is the local system defined in Equation \ref{Decomp1}, associated with the unbranched covering $f\vert_{X \setminus f^{-1}(R)}: X \setminus f^{-1}(R) \longrightarrow Y \setminus R$ and $\overline{p}$ is either the lower or the upper middle perversity.
\end{proposition}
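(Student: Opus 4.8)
The plan is to reproduce the argument of Theorem~\ref{MainTheo1} and Example~\ref{ExampleMfd}, the only new feature being that the target $Y$ is singular and the stratification in play is the refined one of Proposition~\ref{RefinedStrat}. All the structural prerequisites are already available: Proposition~\ref{RefinedStrat} furnishes the refined stratification $Y=Y_n\supset Y_{n-2}(=R)\supset Y_{n-3}\supset\cdots$; the proposition following it shows that the pull-back filtration $X\supset X_{n-2}\supset X_{n-3}\supset\cdots$ with $X_i=f^{-1}(Y_i)$ is a stratification of the closed manifold $X$; and the corollary after Proposition~\ref{RefinedStrat} shows that the fibre cardinality of $f$ is constant on each connected component of each refined stratum, so that every restriction $f\vert_{X_k\setminus X_{k-1}}\colon X_k\setminus X_{k-1}\longrightarrow Y_k\setminus Y_{k-1}$ is an unramified covering. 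These facts give, for each $k$, a commutative square of inclusions with vertical maps $f\vert_k\colon V_k\to U_k$ and $f\vert_{k+1}\colon V_{k+1}\to U_{k+1}$, exactly as in the ladder diagram of Theorem~\ref{MainTheo1}; here $f\vert_2=f\vert_{X\setminus f^{-1}(R)}$ is the unbranched part and $f\vert_{n+1}=f$.

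The one genuinely new ingredient to be supplied is the local contractibility statement that was used in the manifold case: for every $y\in R$ there is a cofinal family of distinguished open neighbourhoods $U'\ni y$ in $Y$ such that every connected component of $f^{-1}(U')$ is contractible. I would obtain this by rerunning the cone-collapsing argument of Example~\ref{ExampleMfd}, now using the compatible cone-like charts constructed in the proof of Proposition~\ref{RefinedStrat}. By local flatness of $R\hookrightarrow Y$ (Definition~\ref{localflatness}) one chooses $U'$ with $U'\cong\mathbb{R}^{l'}\times C(L')$ and $U'\cap R$ a contractible cone-like subspace --- of the form $\mathbb{R}^{l'}\times C(S^{l-l'-1}\ast L)$ or $\mathbb{R}^{l}\times C(L)$ according to the relative dimensions of the intrinsic strata of $Y$ and of $R$ through $y$ --- so that $U'\cap R\hookrightarrow U'$ is a cofibration. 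If $V$ is a connected component of $f^{-1}(U')$ over which the fibre of a branch point has cardinality one, then $f\vert_{V\cap f^{-1}(U'\cap R)}$ is a homeomorphism onto the contractible space $U'\cap R$; hence $V\cap f^{-1}(U'\cap R)$ is contractible, and by local flatness of $B\hookrightarrow X$ (Definition~\ref{BranchLocFlat}, $X$ a manifold) its inclusion into $V$ is again a cofibration. The induced quotient map $f\vert_{\operatorname{quo.}}\colon V/(V\cap f^{-1}(U'\cap R))\to U'/(U'\cap R)$ is a branched covering with one-point branch locus, homeomorphic to the open cone $C(f\vert_\partial)$ on an unramified covering $f\vert_\partial$; in particular $V/(V\cap f^{-1}(U'\cap R))$ is a cone, hence contractible, and since $V\cap f^{-1}(U'\cap R)$ is contractible and cofibred in $V$, also $V\simeq V/(V\cap f^{-1}(U'\cap R))\simeq\ast$. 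Shrinking $U'$ gives the required cofinal family.

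With this in hand, the induction proceeds exactly as in Theorem~\ref{MainTheo1}. Since $f\vert_2$ is unramified, Proposition~\ref{PropDecomp1} gives $Rf\vert_{2\ast}\underline{\mathbb{Q}}_{X\setminus f^{-1}(R)}\simeq\underline{\mathbb{Q}}_{Y\setminus R}\oplus\mathcal{L}$, with $R^lf\vert_{2\ast}=0$ for $l>0$. Applying the operator $\tau_{\leq\overline{p}(n)-n}Rj_{n\ast}\cdots\tau_{\leq\overline{p}(2)-n}Rj_{2\ast}$ to this quasi-isomorphism produces, on the one side, the direct sum asserted in the proposition (every functor being additive), and, on the other, the complex $\tau_{\leq\overline{p}(n)-n}Rj_{n\ast}\cdots\tau_{\leq\overline{p}(2)-n}Rj_{2\ast}Rf\vert_{2\ast}\underline{\mathbb{Q}}_{X\setminus f^{-1}(R)}$. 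Peeling off the operators one stratum at a time: at the $k$-th step, functoriality of the derived pushforward for the $k$-th commutative square gives $Rj_{k\ast}\,Rf\vert_{k\ast}\simeq Rf\vert_{(k+1)\ast}\,Ri_{k\ast}$, while the derived pushforward--pullback adjunction for $f\vert_{k+1}$, together with the exactness of $Lf\vert_{k+1}^{\ast}$ (so that it commutes with the truncation functors), yields a natural morphism
\begin{align*}
\tau_{\leq\overline{p}(k)-n}\,Rf\vert_{(k+1)\ast}\,Ri_{k\ast}\,\mathcal{G}\longrightarrow Rf\vert_{(k+1)\ast}\,\tau_{\leq\overline{p}(k)-n}\,Ri_{k\ast}\,\mathcal{G},
\end{align*}
where $\mathcal{G}$ is the (suitably shifted) complex built so far on $V_k$, which --- since $X$ is a manifold --- is just the constant sheaf on $V_k$, the iterated $X$-side truncation being the Deligne construction with $IC_X(\underline{\mathbb{Q}}_{X\setminus f^{-1}(R)})\simeq\underline{\mathbb{Q}}_X$. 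Then $\tau_{\leq\overline{p}(k)-n}Ri_{k\ast}\mathcal{G}$ is again the constant sheaf on $V_{k+1}$, and the local contractibility input forces $R^lf\vert_{(k+1)\ast}$ of it to vanish for $l>0$ and to be rank-preserving for $l=0$ (the stalk rank at $y$ being $\#f^{-1}(y)$), so the displayed morphism is a quasi-isomorphism. Iterating, the whole $Y$-side operator commutes past $Rf_{\ast}$; the left-hand side therefore equals $Rf_{\ast}$ applied to the $X$-side Deligne construction, i.e.\ the left-hand side of the proposition, and the stated isomorphism follows.

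The main obstacle is the local contractibility claim of the second paragraph. Unlike in Theorem~\ref{MainTheo1}, the point $y\in R$ may itself be a singular point of $Y$, so one cannot use a Euclidean chart: one must work with a cone chart of $Y$ at $y$, align it with a cone chart of $R$ at $y$ via local flatness, and then verify that the identification $f\vert_{\operatorname{quo.}}\cong C(f\vert_\partial)$ genuinely holds --- that is, that the restricted branched covering over the cone neighbourhood is the open cone on its unramified restriction to the link, with the branch set collapsing precisely to the cone point. Once that geometric input is in place, the homological bookkeeping in the induction is formally identical to the manifold case and uses only the standard facts that a finite map has $t$-exact derived pushforward and that the intersection complex of a manifold, with constant coefficients and any stratification, is the constant sheaf.
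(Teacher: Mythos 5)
Your proposal follows essentially the same route as the paper's own proof: the same ladder of commutative squares, the same application of Proposition \ref{PropDecomp1} followed by the truncation-pushforward operator, the same cone-collapsing argument (via $f\vert_{\operatorname{quo.}}\cong C(f\vert_{\partial})$ and the cofibration $U'\cap R\hookrightarrow U'$) to establish contractibility of the components of $f^{-1}(U')$, and the same adjunction/exactness-of-$Lf\vert_{k+1}^{\ast}$ argument to commute the truncations past the pushforwards, concluded by the same stalkwise vanishing and induction over strata. You correctly identify the one genuinely new ingredient relative to Theorem \ref{MainTheo1} (contractibility near points of $R$ that may be singular in $Y$) and resolve it exactly as the paper does.
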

\begin{proof}
Consider the commutative diagram \begin{align*}		\begin{tikzcd}[ampersand replacement=\&]
		X \setminus f^{-1}(R) \arrow[r,  hookrightarrow,"i_{2}"] \arrow[d, "f \vert_{2}"'] \& X \setminus X_{n-3}  \arrow[d,"f \vert_{3}"] \arrow[r, hookrightarrow,"i_{3}"] \& X \setminus X_{n-4}  \arrow[d,"f \vert_{4}"] \arrow[r, hookrightarrow,"i_{4}"]  \& \cdots \\
		Y \setminus R \arrow[r, hookrightarrow,"j_{2}"]  \& Y \setminus Y_{n-3} \arrow[r,hookrightarrow,"j_{3}"] \& Y \setminus Y_{n-4} \arrow[r, hookrightarrow,"j_{4}"] \& \cdots  \\ 
	\end{tikzcd}.
\end{align*}
	Proposition \ref{PropDecomp1} implies that $Rf \vert_{\ast} \underline{\mathbb{Q}}_{X \setminus f^{-1}(R)} \simeq \underline{\mathbb{Q}}_{Y \setminus R} \oplus \mathcal{L}$, where the local system $\mathcal{L}$ is defined in Equation \ref{Decomp1}. Let $\overline{p}$ be either the lower or the upper middle perversity, and let $\tau_{\leq}$ be the truncation functor. Now, we consider the operator
	\begin{align*}
		\tau_{\leq \overline{p}(n)-n} Rj_{n \ast} \cdots 	\tau_{\leq \overline{p}(3)-n} Rj_{3 \ast} \tau_{\leq \overline{p}(2)-n} Rj_{2 \ast} .
	\end{align*}
	Applying this operator to both sides of the previous isomorphism, we obtain \begin{align*}
		& \tau_{\leq \overline{p}(n)-n} Rj_{n \ast} \cdots 	\tau_{\leq \overline{p}(3)-n} Rj_{3 \ast} \tau_{\leq \overline{p}(2)-n} Rj_{2 \ast} 
		Rf \vert_{\ast}\underline{\mathbb{Q}}_{X \setminus B}[n] \simeq \\  &\tau_{\leq\overline{p}(n)-n} Rj_{n \ast} \cdots 	\tau_{\leq \overline{p}(3)-n} Rj_{3 \ast} \tau_{\leq \overline{p}(2)-n} Rj_{2 \ast} \underline{\mathbb{Q}}_{Y \setminus R}[n] \\ \oplus & \tau_{\leq\overline{p}(n)-n} Rj_{n \ast} \cdots 	\tau_{\leq \overline{p}(3)-n} Rj_{3 \ast} \tau_{\leq \overline{p}(2)-n} Rj_{2 \ast}\mathcal{L}_{Y \setminus R}[n].
	\end{align*}
	In the next step, similar to the proof of Theorem \ref{MainTheo1}, we want to show that for each point $y \in R$ there exists a sufficiently small neighborhood $U^{\prime} \subseteq Y$ of $y$ such that for every open neighborhood $W \subseteq U^{\prime}$ of the point $y$, each connected component of the preimage $f^{-1}(W)$ is contractible. Let $U^{\prime} (\cong \mathbb{R}^{l^{\prime}} \times C(L^{\prime})) \subseteq Y$ and $U(\cong \mathbb{R}^{l} \times C(L)) \subseteq R$ be open neighborhoods of the point $y \in S_{\alpha} \cap T_{\beta}$, where $l^{\prime}=\dim(S_{\alpha} \cap T_{\beta})$, $l= \dim(T_{\beta})$, and $L^{\prime}$ and $L$ are links of the point $y$ in $Y$ and $R$, respectively. As shown in the proof of Proposition \ref{RefinedStrat}, for $U^{\prime}$ sufficiently small, we have $U^{\prime} \setminus U \cong U^{\prime} \setminus (U^{\prime} \cap R) \cong \mathbb{R}^{l^{\prime}} \times (L^{\prime} \setminus (S^{l-l^{\prime} -1} \ast L)) \times (0,1]$. Let $V \subseteq f^{-1}(U^{\prime})$ be a connected component. It follows that $f\vert_{V}:V \longrightarrow U^{\prime}$ is a branched covering with the cardinality of fibers along $(S_{\alpha} \cap T_{\beta}) \cap U^{\prime}$ equal to one. Note that $\partial \overline{U^{\prime}} \setminus ( \partial (\overline{U^{\prime}}) \cap R) \cong D^{l^{\prime}} \times (L^{\prime} \setminus (S^{l-l^{\prime} -1} \ast L )) \times  \{1\}$. It follows that the restriction map $f\vert_{ V \cap f^{-1}((\partial \overline{U^{\prime}} \setminus ( \partial (\overline{U^{\prime}}) \cap R))) } := f \vert_{\partial}$ is an unramified covering. On the other hand, note that the quotient map $f \vert_{ \operatorname{quo.} }: \sfrac{V}{(V \cap f^{-1}(U^{\prime} \cap R))} \longrightarrow \sfrac{U^{\prime}}{(U^{\prime} \cap R)}$ is a branched covering with branch locus being a single point, since $V$ is a connected component of $f^{-1}(U^{\prime})$, which implies that the restriction map $f \vert_{V \cap f^{-1}(U^{\prime} \cap (S_{\alpha} \cap T_{\beta} ))}$ is a homeomorphism. As a result we have a homeomorphism between the map $C(f \vert_{\partial}):=C(f\vert_{ V \cap f^{-1}((\partial \overline{U^{\prime}} \setminus ( \partial (\overline{U^{\prime}}) \cap R))) })$ and the map $ f\vert_{\operatorname{quo.}}$. Note that since $U^{\prime} \cap R \simeq \ast$ and $U^{\prime} \cap R \hookrightarrow U^{\prime}$ is a cofibration; hence the previous homeomorphism of maps implies $V \simeq \sfrac{V}{V \cap f^{-1}(U^{\prime} \cap R)} \cong C\big(V \cap f^{-1}((\partial \overline{U^{\prime}} \setminus ( \partial (\overline{U^{\prime}}) \cap R )))\big) \simeq \ast$.

	Similar to Theorem \ref{MainTheo1}, Consider the adjunction between the derived direct image $Rf\vert_{3\ast}$ and the derived inverse image $Lf\vert_{3}^{\ast}$, together with the fact that $Lf\vert_{3}^{\ast}$ commutes with truncation functors (since it is exact). Hence, for the truncation $\tau_{\leq -n}$, the counit of the adjunction gives a natural morphism
	
	\begin{align*}
		\tau_{\leq \overline{p}(2) -n} Rf\vert_{3\ast} Ri_{2\ast} \underline{\mathbb{Q}}_{X \setminus B}[n] \longrightarrow Rf \vert_{3\ast} \tau_{\leq \overline{p}(2)-n} Ri_{2\ast} \underline{\mathbb{Q}}_{X \setminus B}[n]
	\end{align*}
	
	Note that $\tau_{\leq \overline{p}(2) -n} Ri_{2\ast} \underline{\mathbb{Q}}_{X \setminus B}[n] \simeq \underline{\mathbb{Q}}_{X \setminus B_{n-3}}[n]$ since $X$ is a manifold. The existence of sufficiently small contractible open neighborhoods, as constructed above, such that the preimage under the map $f$ is contractible as well, implies that $R^{l}f_{\ast}(\tau_{\leq \overline{p}(2) -n} Ri_{2\ast} \underline{\mathbb{Q}}_{X \setminus B}[n])_{y}=0$ for $l > 0$ and $y \in Y$. For $l=0$, since $f$ is a finite map, the above natural morphism induces an isomorphism on the stalks. It follows that the induced morphism in the derived category is a quasi-isomorphism. Thus, we have the quasi-isomorphism
	\begin{align*}
		\tau_{\leq \overline{p}(2)-n} Rf \vert_{3 \ast} Ri_{2 \ast} \underline{\mathbb{Q}}_{X \setminus B}[n] \simeq  Rf \vert_{3 \ast} \tau_{\leq \overline{p}(2)-n} Ri_{2 \ast} \underline{\mathbb{Q}}_{X \setminus B}[n].
	\end{align*}

	 The claim follows by repeating the previous step inductively for each square in the commutative diagram above.
\end{proof}

Although Deligne's sheaf complex for a constant sheaf on a top stratum is independent of the stratification (up to quasi-isomorphism), the same is not true for a non-trivial local system $\mathcal{L}$. The stratification with respect to which the complex $IC_{Y}(\mathcal{L})$ is constructed must therefore be specified. To be more precise, let $U_{2}$ and $U_{2}^{\prime}$ be the top strata associated with two different stratifications. Without loss of generality, assume that $U_{2} \subsetneq U_{2}^{\prime}$. There is no canonical way to extend a non-trivial local system $\mathcal{L}$ on $U_{2}$ to a local system $\mathcal{L}^{\prime}$ on $U_{2}^{\prime}$ such that $\mathcal{L}^{\prime} \vert_{U_{2}} \simeq \mathcal{L}$. In fact, the direct image of $\mathcal{L}$ under the inclusion $U_{2} \hookrightarrow U_{2}^{\prime}$ is generally not a local system. Using Proposition \ref{RefinedStrat}, the sheaf complexes appearing in Isomorphism \ref{SingDecomp1} can be interpreted as Deligne's sheaf complexes.
\begin{theorem}\label{MainTheo2}
	Let $f:X \longrightarrow Y$ be a branched covering, where $X$ is a closed topological manifold of dimension $n$, the topological space $Y$ is a closed topological pseudomanifold, and the branch locus is a closed topologically stratified subspace $R \subset Y$ of codimension 2. We consider the refined stratification obtained in Proposition \ref{RefinedStrat}. Then, in $D^{b}_{c}(Y)$, the following isomorphism holds:
	\begin{align}
			Rf_{\ast}{\underline{\mathbb{Q}}}_{X}[n] \simeq IC_{Y}(\underline{\mathbb{Q}}_{Y \setminus R}) \oplus IC_{Y}(\mathcal{L}_{Y \setminus R}),
	\end{align}
	where $\mathcal{L}_{Y \setminus R}$ is the local system defined in Equation \ref{Decomp1}, associated with the unbranched covering $f\vert_{X \setminus f^{-1}(R)}: X \setminus f^{-1}(R) \longrightarrow Y \setminus R$.
    \end{theorem}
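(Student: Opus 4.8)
The plan is to reduce the statement to Proposition~\ref{SingDecompProp1} by recognizing both sides of Isomorphism~\ref{SingDecomp1} as intersection sheaf complexes, once suitably shifted by $[n]$.

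First I would treat the source. Since $X$ is a closed topological manifold and, by the proposition immediately preceding Proposition~\ref{SingDecompProp1}, the pull-back filtration $X \supset f^{-1}(Y_{n-2}) \supset f^{-1}(Y_{n-3}) \supset \cdots$ is a stratification of $X$, the iterated truncation--pushforward $\tau_{\leq \overline{p}(n)-n} Ri_{n\ast}\cdots\tau_{\leq \overline{p}(2)-n}Ri_{2\ast}\underline{\mathbb{Q}}_{X\setminus f^{-1}(R)}$ is, after the usual shift, the Deligne--Goresky--MacPherson model for $IC_X(\underline{\mathbb{Q}}_{X\setminus f^{-1}(R)})$ relative to this stratification. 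Because $X$ is a manifold and the intersection complex with constant coefficients is independent of the chosen stratification, this complex is quasi-isomorphic to $\underline{\mathbb{Q}}_X[n]$; this is precisely the input $IC_X(\underline{\mathbb{Q}}_{X\setminus B})\simeq\underline{\mathbb{Q}}_X$ already used at the end of the proof of Theorem~\ref{MainTheo1}. Applying $Rf_\ast$, which preserves $D^b_c(Y)$ because $f$ is finite, and shifting Isomorphism~\ref{SingDecomp1} by $[n]$, one obtains
\[
\begin{aligned}
Rf_\ast \underline{\mathbb{Q}}_X[n] \simeq{}& \Big(\tau_{\leq \overline{p}(n)-n} Rj_{n\ast}\cdots\tau_{\leq \overline{p}(2)-n}Rj_{2\ast}\underline{\mathbb{Q}}_{Y\setminus R}\Big)[n] \\
&{}\oplus \Big(\tau_{\leq \overline{p}(n)-n} Rj_{n\ast}\cdots\tau_{\leq \overline{p}(2)-n}Rj_{2\ast}\mathcal{L}_{Y\setminus R}\Big)[n].
\end{aligned}
\]

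Next I would identify the two summands. Relative to the refined stratification $Y = Y_n \supset Y_{n-2}(=R) \supset Y_{n-3}\supset\cdots$ furnished by Proposition~\ref{RefinedStrat}, the Deligne construction with $\overline{p}$ the lower (or upper) middle perversity produces, for any local system $\mathcal{G}$ on the top stratum $Y\setminus R$, exactly the shifted complex $IC_Y(\mathcal{G})$. Taking $\mathcal{G}=\underline{\mathbb{Q}}_{Y\setminus R}$ identifies the first summand with $IC_Y(\mathbb{Q})$, and taking $\mathcal{G}=\mathcal{L}_{Y\setminus R}$ identifies the second with $IC_Y(\mathcal{L}_{Y\setminus R})$. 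Here it matters that Proposition~\ref{RefinedStrat} makes the refined filtration a legitimate stratification: while $IC_Y(\mathbb{Q})$ does not depend on the stratification, $IC_Y(\mathcal{L}_{Y\setminus R})$ does, as noted in the remark preceding the theorem, so the right-hand side is only well posed once the stratification is fixed. Substituting these identifications into the displayed isomorphism yields $Rf_\ast \underline{\mathbb{Q}}_X[n] \simeq IC_Y(\mathbb{Q})\oplus IC_Y(\mathcal{L}_{Y\setminus R})$, which is the claim.

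The hard part is the first step: one must check that the iterated truncation--pushforward of $\underline{\mathbb{Q}}_{X\setminus f^{-1}(R)}$ along the pull-back stratification reproduces $\underline{\mathbb{Q}}_X[n]$ even though this stratification need not be the intrinsic one of the manifold $X$. This relies on the local geometry already in hand---points of $B=f^{-1}(R)$ have arbitrarily small neighborhoods with contractible preimages, $B$ is of codimension $2$ in $X$, and $X$ is a manifold---combined with the invariance of the intersection complex with constant coefficients under refinement of stratifications. With that identification secured, the rest is bookkeeping of the shift $[n]$ and a direct appeal to Proposition~\ref{SingDecompProp1}.
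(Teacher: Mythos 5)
Your proposal is correct and follows essentially the same route as the paper: both reduce the statement to Proposition \ref{SingDecompProp1}, identify the left-hand side with $Rf_{\ast}\underline{\mathbb{Q}}_{X}[n]$ via the pull-back stratification of the manifold $X$, and recognize the two right-hand summands as the Deligne constructions of $IC_{Y}(\mathbb{Q})$ and $IC_{Y}(\mathcal{L}_{Y\setminus R})$ with respect to the refined stratification of Proposition \ref{RefinedStrat}. Your additional remarks on the shift bookkeeping and on why the stratification must be fixed for the non-trivial local system match the paper's own discussion.
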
 
    \begin{proof}
    Using the pull-back stratification of $X$ given by Filtration \ref{refinedX}, since $X$ is a topological manifold, the left-hand side of Isomorphism \ref{SingDecomp1} is quasi-isomorphic to $Rf_{\ast}\underline{\mathbb{Q}}_{X}[n]$. The first term on the right-hand side, up to a shift, is Deligne's sheaf for the constant sheaf with stalks $\mathbb{Q}$ on the top stratum and is therefore (up to quasi-isomorphism) independent of the chosen stratification. However, since the local system $\mathcal{L}_{Y \setminus R}$ is in general not trivial, we must specify the stratification with respect to which Deligne's sheaf complex $IC_{Y}(\mathcal{L}_{Y \setminus R})$ is constructed; this is Stratification \ref{refined}.
    \end{proof}

    	Let $\bar{m}$ and $\bar{n}$ be the lower and the upper middle perversities, respectively. For an arbitrary topological pseudomanifold $Y$ with strata of odd codimension, Deligne's sheaf complexes $IC_{Y}^{\bar{m}}(\mathbb{Q})$ and $IC_{Y}^{\bar{n}}(\mathbb{Q})$ are not quasi-isomorphic in general. In fact, Verdier duality states that if $Y$ is an orientable topological pseudomanifold of pure dimension $k$, then the Verdier dual of Deligne's sheaf complex associated to the perversity $\bar{m}$ and to the constant sheaf on the non-singular part of the pseudomanifold is quasi-isomorphic to Deligne's sheaf complex associated to the perversity $\bar{n}$ and to the constant sheaf on the non-singular part of the pseudomanifold, up to a shift by $k$. However, under the assumptions of the above theorem, it follows that the complexes of sheaves $IC_{Y}^{\bar{n}}(\mathbb{Q}) \oplus IC_{Y}^{\bar{n}}(\mathcal{L}_{Y \setminus R})$ and $IC_{Y}^{\bar{m}}(\mathbb{Q}) \oplus IC_{Y}^{\bar{m}}(\mathcal{L}_{Y \setminus R})$ are quasi-isomorphic. This corollary, which is due to the possible presence of strata of odd codimension, is unique to the topological setting and has no algebraic counterpart.

\end{document}